\documentclass{amsart}
\usepackage{amsfonts,amsthm,amsmath}
\usepackage{amssymb}
\usepackage{amscd}
\usepackage[utf8]{inputenc}
\usepackage[a4paper]{geometry}
\usepackage{enumerate}
\usepackage[usenames,dvipsnames]{color}
\usepackage{array}
\usepackage{array,tabularx}

\oddsidemargin 0pt \evensidemargin 0pt \marginparsep 10pt
\topmargin 0pt \baselineskip 14pt \textwidth 6in \textheight 9in
\linespread{1.2}

%-----------------------------
%% Math operators

%\DeclareMathOperator*{\Lim}{Lim}

%------------------------------

%%% Numbering
\numberwithin{equation}{section}
\numberwithin{equation}{subsection}

%%% Theorems
\theoremstyle{plain}

\newtheorem{theorem}[equation]{Theorem}
\newtheorem{lemma}[equation]{Lemma}
\newtheorem{proposition}[equation]{Proposition}

\newtheorem{corollary}[equation]{Corollary}

\newtheorem{cor}[equation]{Corollary}

\theoremstyle{definition}
\newtheorem{example}[equation]{Example}
\newtheorem{remark}[equation]{Remark}

\newtheorem{definition}[equation]{Definition}

%\theoremstyle{remark}

%\numberwithin{equation}{section}
%\numberwithin{equation}{subsection}

% \newcommand{ \deg }{ \mbox{deg} }

%\newcommand{\et}{{\mathcal T}}

\def\C{\mathbb C}
\def\Q{\mathbb Q}

\def\Z{\mathbb Z}

\def\im{{\rm Im}}

\newcommand{\calv}{{\mathcal V}}

\newcommand{\calm}{{\mathcal M}}

\newcommand{\calG}{{\mathcal G}}

\newcommand{\calO}{{\mathcal O}}

\newcommand{\calS}{{\mathcal S}}

\newcommand{\calL}{\mathcal{L}}

\newcommand{\tX}{\widetilde{X}}

\newcommand{\cO}{{\mathcal O}}

\newcommand{\bC}{{\mathbb C}}

\newcommand{\eca}{{\rm ECa}}
\newcommand{\pic}{{\rm Pic}}

\newcommand{\bZ}{{\mathbb{Z}}}
\newcommand{\bQ}{{\mathbb{Q}}}

\author{J\'anos Nagy}
\address{Central European University, Dept. of Mathematics,  Budapest, Hungary}
\email{nagy\textunderscore janos@phd.ceu.edu}

\author{Andr\'as N\'emethi}
%\thanks{AN was partially supported by NKFIH Grant  112735 and
%ERC Adv. Grant LDTBud of A. Stipsicz at R\'enyi Institute of Math., Budapest}
\address{Alfr\'ed R\'enyi Institute of Mathematics,
Hungarian Academy of Sciences,
Re\'altanoda utca 13-15, H-1053, Budapest, Hungary \newline
 \hspace*{4mm} ELTE - University of Budapest, Dept. of Geometry, Budapest, Hungary \newline \hspace*{4mm}
BCAM - Basque Center for Applied Math.,
Mazarredo, 14 E48009 Bilbao, Basque Country – Spain}
\email{nemethi.andras@renyi.mta.hu }

\title{The dimension of the image of the Abel map \\
associated with normal
 surface singularities
}

\begin{document}

\keywords{normal surface singularities, links of singularities,
plumbing graphs, rational homology spheres,
Abel map, effective Cartier divisors, Picard group, Brill--Noether theory,
Laufer duality, surgery formulae,
superisolated singularities, cohomology of line bundles}
\thanks{The authors are partially supported by
 NKFIH Grant ``\'Elvonal (Frontier)'' KKP 126683.}
\subjclass[2010]{Primary. 32S05, 32S25, 32S50, 57M27
Secondary. 14Bxx, 14J80, 57R57}

\begin{abstract}
Let $(X,o)$ be a complex normal surface singularity with rational homology sphere link
and let $\tX$ be one of its good resolutions. Fix an effective cycle $Z$
supported on the exceptional curve and also a possible Chern class $l'\in H^2(\tX,\Z)$.
Define ${\rm Eca}^{l'}(Z)$ as the space of effective Cartier divisors on $Z$ and
$c^{l'}(Z):{\rm Eca}^{l'}(Z)\to {\rm Pic}^{l'}(Z)$, the corresponding Abel map. In this note we provide two algorithms, which provide the dimension of the image of the Abel map.

Usually, $\dim {\rm Pic}^{l'}(Z)=p_g$, $\dim\,\im (c^{l'}(Z))$ and
  ${\rm codim}\,\im (c^{l'}(Z))$ are not topological, they are in subtle relationship with
  cohomologies of certain line bundles. However, we provide combinatorial formulae for them
  whenever the analytic structure on $\tX$ is generic.

  The  ${\rm codim}\,\im (c^{l'}(Z))$  is related with
  $\{h^1(\tX,\calL)\}_{\calL\in \im (c^{l'}(Z))}$; in order to treat the `twisted'
  family  $\{h^1(\tX,\calL_0\otimes \calL)\}_{\calL\in \im (c^{l'}(Z))}$ we need to
  elaborate a generalization of the Picard group and of the Abel map.
  The above algorithms are also generalized.
\end{abstract}

\maketitle

\linespread{1.2}

%\date{}

\pagestyle{myheadings} \markboth{{\normalsize  J. Nagy, A. N\'emethi}} {{\normalsize The image of the Abel map }}

%\setcounter{tocdepth}{1}
%\tableofcontents

\section{Introduction}\label{s:intr}
\subsection{}
Fix a complex normal surface singularity $(X,o)$ and let $\tX$ be one of its good resolutions.
 We assume that the link of $(X,o)$ is a rational homology sphere.
Denote by  $L$ the lattice $H_2(\tX,\Z)$ (endowed with its negative definite intersection form),
by  $L'$   its dual lattice $H^2(\tX,\Z)$ and by $\calS'\subset L'$
the Lipman cone of antinef cycles.
The irreducible exceptional curves are denoted by $\{E_v\}_{v\in\calv}$, their duals in $L'$ by
$\{E_v^*\}_{v\in\calv}$, $E:=\cup_vE_v$.
(For details see section \ref{s:prel}).

In \cite{NNI} for any effective cycle $Z\geq E$ and Chern class $l'\in-\calS'$
the authors introduced (based on \cite{Groth62,Kl,Kleiman2013})
and investigated the set of effective Cartier divisors $\eca^{l'}(Z)$ and the corresponding Abel maps $c^{l'}(Z) : \eca^{l'}(Z) \to \pic^{l'}(Z)$, where $\pic^{l'}(Z)$
is the affine subspace of the Picard group  of line bundles over $Z$ with Chern class $l'$.
The image of the Abel map consists of line bundles without fixed components.
\cite{NNI} and follow-up articles contain
 several properties of the Abel map, e.g.   characterisation
when it is dominant, or its relationship with cohomological properties of line bundles.
See \cite{NNII} and \cite{NNIII} for the study in the case  of generic and  elliptic singularities.
In all these treatments the investigation of the image $\im(c^{l'}(Z))$ was extremely useful.
The main goal of the present article is the computation of $\dim\,\im(c^{l'}(Z))$ and the deduction  of several new consequences.
We consider these as  necessary  steps towards a long--term final goal:
the development of the   Brill--Noether theory of normal surface singularities.

Though the dimension $(l',Z)$ (and the homotopy type) of the connected
complex manifold  $\eca^{l'}(Z)$ is topological (i.e. it depends only on the link, or on
the lattice $L$), the dimension $h^1(\calO_Z)$
of the target affine space $\pic^{l'} (Z)$ depends essentially on the analytic structure:
if we fix the topological type (and $Z$), the cohomology group $H^1(\calO_Z)$ usually depends on
the chosen analytic structure supported by the fixed topological type. The same is true
for both $\dim\, \im (c^{l'}(Z))$ and ${\rm codim}\, \im (c^{l'}(Z))$: though (surprisingly)
there is a topological characterisation of those cases when $c^{l'}(Z))$ is dominant, oppositely,
 the cases e.g. when $c^{l'}(Z))$ is a point or it is a hypersurface have no such topological characterisations.
In particular, both integers $\dim\, \im (c^{l'}(Z))$ and ${\rm codim}\, \im (c^{l'}(Z))$
are subtle analytical invariants. In fact, it turns out that
 ${\rm codim}\, \im (c^{l'}(Z))$ equals $h^1(Z,\calL_{gen}^{im})$, where $\calL_{gen}^{im}$
 is a generic line bundle from $\im (c^{l'}(Z))$. For more about such  general
 facts regarding the Abel maps (and also  about several concrete examples) see \cite{NNI,NNII,NNIII}.

Maybe it is  worth to emphasize that in the case of the Abel map associated with a smooth projective curve the dimension of the image is immediate (for this classical case consult e.g.
\cite{ACGH,Flamini}). This (and almost any other comparison)  shows the huge technical differences
between the classical smooth curve cases and our situation (which, basically, is the Brill--Noether theory of a non--reduced exceptional curve supported by the exceptional set
 of  a surface singularity resolution).
\subsection{The algorithms}
In the body of the article we present two inductive algorithm for the computation of
$d_Z(l'):=\dim\, \im (c^{l'}(Z))$. The induction follows a sequential blow up procedure
starting from the resolution $\tX$.
Write  $-l' = \sum_{v \in \calv} a_v E_v^*\in\calS'\setminus \{0\}$ (hence each $a_v\in\Z_{\geq 0}$).
Then, for every $v\in \calv$ with $a_v>0$ we fix $a_v$ generic points on $E_v$, say
$p_{v,k_v}$, $1\leq k_v\leq a_v$. Starting from each $p_{v,k_v}$ we consider a sequence of blowing ups:
first we blow up $p_{v,k_v}$ and we create the exceptional curve $F_{v,k_v,1}$,
then we blow up a generic point of
$F_{v,k_v,1}$ and we create $F_{v,k_v,2}$, and we do this, say, ${\bf s}_{v,k_v}$ times
(an exact bound is given in \ref{bek:algsetup}).
We proceed in this way with all points $p_{v,k_v}$, hence we get $\sum_va_v$ chains of modifications.
Hence, a set of integers
${\bf s}=\{{\bf s}_{v,k_v}\}_{v\in\calv,\ 1\leq k_v\leq a_v}$  provides a
modification  $\pi_{{\bf s}}:\tX_{{\bf s}}\to \tX$.
In $\tX_{{\bf s}}$ we find the exceptional curves $\cup_{v\in\calv}E_v\cup \cup_{v,k_v}
\cup_{1\leq t \leq {\bf s}_{v,k_v}}F_{v,k_v,t}$.
% we index the set of vertices as
%$\calv_{{\bf s}}:=\calv \cup \cup_{v,k}
%\cup_{1\leq t\leq {\bf s}_{v,k}}\{w_{v,k,t}\}$.
At each level ${\bf s}$ we set $Z_{{\bf s}}:=\pi_{{\bf s}}^*(Z)$ and
%$I_{{\bf s}}:=\cup_{v,k}\{w_{v,k,{\bf s}_{v,k}}\}$,
$-l'_{{\bf s}}:= \sum_{v,k_v}F^*_{v,k_v,{\bf s}_{v,k_v}}$
(in $L'(\tX_{{\bf s}})$, where
 $F_{v,k_v,0}=E_v$).
  We also write
  $d_{{\bf s}}:=\dim \im (c^{l'_{{\bf s}}}(Z_{{\bf s}}))$. Note that
  $d_{{\bf 0}}=d_Z(l')$, and it turns out that $d_{{\bf s}}=0$ whenever the entries of ${\bf s}$ are
  large enough. (Sometimes we abridge the pair $(v,k_v)$ by $(v,k)$.)

In order to run an induction,
 for any ${\bf s}$ and $(v,k)$ let ${\bf s}^{v,k}$ denote that  tuple which
is obtained from
${\bf s}$ by increasing ${\bf s}_{v,k}$ by one. The inductive algorithm compares
$d_{{\bf s}}$ with all possible $  d_{{\bf s}^{v,k}}$.

Using the fact (cf. the proof of Theorem \ref{th:ALGORITHML}) that
$\eca^{l'_{{\bf s}^{v,k}}}(Z_{{\bf s}^{v,k}})$ is birational with a codimension one subspace of
$\eca^{l'_{{\bf s}}}(Z_{{\bf s}})$, we obtain
\begin{equation}\label{eq:jumpint}
d_{{\bf s}}- d_{{\bf s}^{v,k}}\in\{0,1\}.\end{equation}
A very  subtle part of the theory is to identify all those pairs
$({\bf s}, {\bf s}^{v,k})$, where the gaps/jumps occur (that is, when the difference in (\ref{eq:jumpint})
is 0 or 1).
The identification of such places carries  a deep analytic content (and even if in some cases it
can be characterised topologically --- e.g., in the case of a generic analytic structure ---,
it might be guided by rather complicated combinatorial patterns).

 \begin{example} To create a good intuition for such a phenomenon, let us recall the classical
 case of Weierstrass points. Let $C$ be a smooth projective complex curve of genus $g$
 and let us fix a
 point $p\in C$. For any $s\in\Z_{\geq 0}$ consider $\ell(s):=h^0(C, \calO_C(sp))$.
 Then $\ell(0)=1$ and $\ell(2g-1+k)=g+k$ for $k\geq 0$. Moreover, $\ell(s)-\ell(s-1)\in \{0,1\}$
 for any $s\geq 0$. Those $s$ values when this difference is 0 are called the gaps,
  there are $g$ of them. For a generic point the gaps are $\{1, 2,\ldots, g\}$,
  otherwise
 $p$ is called a   Weierstrass point. For  Weierstrass point the set of gaps might depend on the choice of $p$  and on the analytic structure of $C$. The characterization of all
 possible gap--sets is still unsettled.
 \end{example}

In order to characterize completely our  gaps/jump places, we will use {\it test functions}.
For such a test function, say $\tau_{{\bf s}}$, we will require  the following properties.
Firstly, it is a function ${\bf s}\mapsto \tau_{{\bf s}}\in \Z_{\geq 0}$, such that
$d_{{\bf s}}\leq \tau_{{\bf s}}$  for any ${\bf s}$. Usually, $\tau_{{\bf s}}$ is defined by
a weaker (more robust) geometric construction, which approximates/bounds $\im (c^{l'}(Z))$,
and which hopefully is easier to compute. Secondly, $t_{{\bf s}}$ satisfies the following remarkable
{\it testing property} formulated by the next pattern theorem.

\vspace{1mm}

\noindent {\bf Pattern Theorem. } {\it The sequence of integers $d_{{\bf s}}$ are
determined inductively as follows:

(1) $d_{{\bf s}}- d_{{\bf s}^{v,k}}\in\{0,1\}$ (cf. (\ref{eq:jumpint})),

(2) if for some fixed ${\bf s}$ the numbers $\{d_{{\bf s}^{v, k}}\}_{v,k}$ are not the same,
then $d_{{\bf s}} = \max_{v, k}\{\,d_{{\bf s}^{v, k}}\}$.
 In the case when all the numbers $\{d_{{\bf s}^{v, k}}\}_{v,k}$ are the same,
 then if this common value $d_{{\bf s}^{v, k}}$ equals $\tau_{{\bf s}}$, then $d_{{\bf s}} =
\tau_{{\bf s}}  =d_{{\bf s}^{v, k}}$;  otherwise $d_{{\bf s}} = d_{{\bf s}^{v, k}}+1$.}

\vspace{1mm}

More precisely, we wish to determine from the collection $\{d_{{\bf s}^{v, k}}\}_{v,k}$
the term $d_{{\bf s}}$ (as a decreasing induction). Using {\it (1)} this is ambiguous only if all this numbers are the same, say $d$. In this case $d_{{\bf s}}$ can be $d$ or $d+1$. Well,
if the inequality ($\dag$) $d_{{\bf s}}\leq \tau_{{\bf s}}$  is not obstructed by the choice of
 $d_{{\bf s}}=d+1$, then this value is taken.  Otherwise it is $d$. That is, $d_{{\bf s}}$
 is as large as it can be,  modulo {\it (1)} and ($\dag$).

This can be an interesting procedure even if $s$ is a 1--entry parameter. E.g., in the case of
classical Weierstrass points, the inequality $\ell(s)\leq 1+\lfloor s/2\rfloor$ (valid for
$s\leq 2g-1$),
given by Clifford's theorem,  by this `maximal--testing procedure' gives the
sequence $\{1,1,2,2,\dots\}$ for $s\geq 0$, with gaps $\{1,3, \ldots, 2g-1\}$.
In fact,
in the case of hyperelliptic curves the  Weierstrass points are the branch points of the hyperelliptic projection and their gap--set is uniformly $\{1,3,5,\ldots, 2g-1\}$.
(However, for non--hyperelliptic curves we are not aware of the existence of a non-trivial
 test function.)

If the Pattern Theorem from above holds, then it turns out (see e.g. Corollary \ref{th:ALGORITHM3})
that
$d_{{\bf s}}=\min_{{\bf s}\leq \widetilde{{\bf s}}} \{ |\widetilde{{\bf s}}-
{\bf s}|+\tau_{\widetilde{\bf s}}\}$  for any ${\bf s}$.
(Here $|{\bf s}|=\sum_{v,k} s_{v,k_v}$.)
In particular,
\begin{equation}\label{eq:maxtype}
d_Z(l')=d_{{\bf 0}}=\min_{{\bf 0}\leq {\bf s}} \{ |{\bf s}|+\tau_{{\bf s}}\}.\end{equation}
Such type of formulas already appeared in the computation of $d_Z(l') $ for weighted homogeneous singularities (and specific $l'$) in \cite{NNI}, case which lead us to the present general case.
(The type of formula, and also the conceptual approach behind,  can also be compared e.g. with Pflueger's formula
regarding the dimension of the Brill--Noether varieties  of a generic
smooth projective curve $C$ with fixed gonality, cf. \cite{Pfl,JR}.)
Nevertheless, the approach of the testing function (and the corresponding $\min$--type close
formulae) is the novelty of the present manuscript.

\subsection{The testing functions for $d_{{\bf s}}$}
Obviously, the above theorem is valuable only if $\tau_{{\bf s}}$
is essentially  different than $d_{{\bf s}}$ and also if it is computable from
other different geometrical  behaviours. It is also clear that
not any  upper bound  $d_{{\bf s}}\leq \tau_{{\bf s}}$
satisfies the testing property {\it (2)}: this is satisfied only for bounds $\tau({\bf s})$ with
very structural relationship, symbiosis  with the original $d_{{\bf s}}$.
Hence it is not easy to find
testing functions, they must `testify'  about some deep geometric property:
even the  existence of
computable testing function(s) is really remarkable.

Our first test function is defined as follows. Consider again $Z\geq E$, $l'\in-\calS'$ associated with a resolution $\tX$, as above. Then, besides the Abel map $c^{l'}(Z)$ one can consider its `multiples' $\{c^{nl'}(Z)\}_{n\geq 1}$. It turns out that $n\mapsto \dim \im (c^{nl'}(Z))$
is a non-decreasing sequence,  $\im (c^{nl'}(Z))$ is an affine subspace
for $n\gg 1$, whose dimension $e_Z(l')$ is independent of $n\gg 0$, and essentially it depends only
on the $E^*$--support of $l'$ (i.e., on $I\subset \calv$, where $-l'=\sum_{v\in I}a_vE^*_v$ with all
$\{a_v\}_{v\in I}$ nonzero). From construction $d_Z(l')\leq e_Z(l')$, however they usually are not the same. Furthermore,
$e_Z(l')=e_Z(I)$ plays a crucial role in different analytic properties of $\tX$
(surgery formula, $h^1(\calL)$--computations, base point freeness properties). For details see \cite{NNI} or
subsections \ref{ss:AbelMap} and \ref{ss:LD} here, especially definition \ref{def:6.2} and Theorem
\ref{prop:AZ} (and also the proof of Theorem \ref{th:ALGORITHM}).
Now, at any step of the tower $\tX_{{\bf s}}$ one can consider this invariant
$e_{Z_{{\bf s}}}(l'_{{\bf s}})$, an integer denoted by $e_{{\bf s}}$.

Theorem \ref{th:ALGORITHM} (the `first algorithm')
guarantees that $e_{{\bf s}}$ is a testing function for $d_{{\bf s}}$.

The invariants $\{e_{{\bf s}}\}_{{\bf s}}$ are still hard to compute (cf. \ref{ss:4.1}).
However, the first algorithm is a necessary intermediate step for the second algorithm, valid  for
another testing function.

The advantage of the second testing function is that it is defined at the level of $\tX$ only.
It is based on  Laufer's  perfect pairing $H^1(\calO_Z)\otimes \calG_Z
%H^0(\tX, \Omega^2_{\tX}(Z))/H^0(\tX,\Omega^2_{\tX})
\to \C$, where $\calG_Z$ denoted the  space of classes of forms
$H^0(\tX, \Omega^2_{\tX}(Z))/H^0(\tX,\Omega^2_{\tX})$. $\calG_Z$ has a natural divisorial filtration
$\{\calG_l\}_{0\leq l\leq Z}$, where $\calG_l$ is generated by forms with pole $\leq l$.
Its dimension (via Laufer duality) is $h^1(\calO_l)$.
(For more see \cite{NNI} and \ref{ss:LD} here.)
Next, for any ${\bf s}$  define the cycle $l_{{\bf s}}\in L$ of $\tX$ by
$$l_{{\bf s}}:=\min \Big\{
\sum_{v\in \calv} \, \min_{1\leq k_v\leq a_v}\{{\bf s}_{v, k_v}\}E_v, Z\Big\}\in L.$$
Set also $g_{{\bf s}}:=\dim \calG_{l_{\bf s}}$ as well. It turns out (see \ref{ss:4.1})
that
$d_{{\bf s}}\leq e_{{\bf s}}\leq h^1(\calO_Z)-g_{{\bf s}}$.
Usually, the equality $e_{{\bf s}}= h^1(\calO_Z)-g_{{\bf s}}$ rarely happens, however, it happens
whenever the testing property requires it! Theorem \ref{th:ALGORITHM4} (the `second algorithm')
says  that $h^1(\calO_Z)-g_{{\bf s}}$ is  a testing function for $d_{{\bf s}}$ indeed.

The cases of superisolated  singularities is exemplified.

The second algorithm has several consequences. E.g., a `numerical'  one, cf. (\ref{eq:form3}):
\begin{equation*}
d_Z(l') = \min_{0\leq Z_1 \leq Z}\{\, (l', Z_1) + h^1(\calO_Z) - h^1(\calO_{Z_1})\, \}, \ \mbox{or}, \
{\rm codim}\, \im (c^{l'}(Z))=\max_{0\leq Z_1 \leq Z}\{\, h^1(\calO_{Z_1})-(l', Z_1) \, \}.
\end{equation*}
The cycles $Z_1$ for which the above minimum is realized have several additional geometric
properties
(cf.  Lemma \ref{lem:MINSETS} and \ref{ss:5.1}). In particular, such a $Z_1$ imposes the following
 conceptual consequence:

\vspace{1mm}

\noindent {\bf Structure Theorem for the image of the Abel map.}
{\it Fix a resolution $\tX$, a cycle $Z\geq  E$  and a   Chern class $l'\in -\calS'$
as above.
Then there exists an effective cycle $Z_1 \leq Z$, such that: {\it (i)} the map
$\eca^{l'}(Z) \to H^1(Z_1)$ is birational onto its image,
and {\it (ii) }the generic fibres of the restriction of $r$,
$r^{im}:\im(c^{l'}(Z)) \to \im(c^{l'}(Z_1))$,  have dimension $h^1(\calO_{Z})-h^1(\calO_{Z_1})$.
In particular, for any such $Z_1$, the space $\im (c^{l'}(Z))$ is birationally
equivalent with an affine fibration  over $\eca^{l'}(Z_1)$ with affine fibers of dimension
$h^1(\calO_{Z})-h^1(\calO_{Z_1})$.}

\subsection{The case of generic analytic structure}  In section \ref{ss:GEN} we prove that if $\tX$ has a generic analytic structure (in the sense of \cite{LaDef1,NNII}), and $Z\geq E$ and $l'\in-\calS' $ then
both $\dim \im (c^{l'}(Z))$ and ${\rm codim} \im (c^{l'}(Z))$ are topological. E.g., we have
(where $\chi$ is the usual Riemann--Roch expression):
\begin{equation}\label{eq:gen3int}
{\rm codim}\, \im (c^{l'}(Z))=  \max_{0\leq Z_1\leq Z}\big\{ \, -(l', Z_1)
-  \chi(Z_1) +  \chi(E_{|Z_1|}) \, \big\}.
\end{equation}
The maximum at the right hand side is realized e.g. for the cohomology cycle of
$\calL^{im}_{gen}\in \im (c^{l'}(Z))\subset \pic^{l'}(Z)$.
Furthermore,
\begin{equation*}\label{eq:gen4int}
h^1(Z, \calL)\geq  \max_{0\leq Z_1\leq Z}\big\{ \, -(l', Z_1)
-  \chi(Z_1) +  \chi(E_{|Z_1|}) \, \big\}
\end{equation*}
for any $\calL\in \im (c^{l'}(Z))$ and equality holds for generic $\calL_{gen}^{im}\in
 \im (c^{l'}(Z))$.

The identity (\ref{eq:gen3int}),
valid for a generic analytic structure of $\tX$, extends to an optimal inequality
valid for {\it any analytic structure}.

\begin{theorem}\label{th:tzint}
Consider an {\em arbitrary}  normal surface singularity $(X,o)$, its resolution $\tX$, $Z\geq E$ and
$l'\in -\calS'$. Then
${\rm codim}\, \im (c^{l'}(Z))=h^1(Z, \calL^{im}_{gen})$
satisfies
\begin{equation}\label{eq:gen5int}
{\rm codim}\, \im (c^{l'}(Z))\geq  \max_{0\leq Z_1\leq Z}\big\{ \, -(l', Z_1)
-  \chi(Z_1) +  \chi(E_{|Z_1|}) \, \big\}.
\end{equation}
In particular, for any $\calL\in \im (c^{l'}(Z))$ one also has
\begin{equation*}\label{eq:gen5bint}
h^1(Z,\calL)\geq h^1(Z,\calL_{gen}^{im})={\rm codim}\, \im (c^{l'}(Z))\geq  \max_{0\leq Z_1\leq Z}\big\{ \, -(l', Z_1)
-  \chi(Z_1) +  \chi(E_{|Z_1|}) \, \big\}.
\end{equation*}
\end{theorem}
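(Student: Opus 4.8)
The plan is to reduce the general inequality \eqref{eq:gen5int} to the already-established identity \eqref{eq:gen3int} for the generic analytic structure, by a semicontinuity argument. The key observation is that the right-hand side of \eqref{eq:gen5int} is purely combinatorial: it depends only on $Z$, on $l'$, and on the lattice $L$ (through the intersection form that enters $\chi$ and through which $E_{|Z_1|}$ is the fundamental cycle-type reduction). Hence for a fixed topological type and fixed $(Z,l')$ this maximum is a single integer $m(Z,l')$, and \eqref{eq:gen3int} tells us that ${\rm codim}\,\im(c^{l'}(Z)) = m(Z,l')$ precisely when $\tX$ carries a generic analytic structure. So the content of Theorem \ref{th:tzint} is that passing from the generic structure to an arbitrary one can only \emph{increase} the codimension (equivalently, can only decrease $d_Z(l') = \dim\im(c^{l'}(Z))$).

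The first step is therefore to set up the deformation-theoretic framework: fix the topological type (the resolution graph) and consider a family of analytic structures $\{\tX_t\}_{t\in T}$ over a suitable parameter base $T$, with $\tX_0$ the given arbitrary structure and $\tX_{t_{gen}}$ a generic one (such a family exists by \cite{LaDef1,NNII}; the generic structure sits in a Zariski-dense open subset). Over this family one has the relative Abel map, whose image in the relative Picard scheme has fibers $\im(c^{l'}(Z)_t)$; the relevant fact is that $t\mapsto \dim\im(c^{l'}(Z)_t)$ is lower semicontinuous (the dimension of the image of a morphism of varieties drops on closed subsets under specialization, or, dually, $t\mapsto {\rm codim}$ is upper semicontinuous). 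The second step is to combine this with $h^1(Z,\calL_{gen,t}^{im}) = {\rm codim}\,\im(c^{l'}(Z)_t)$, which holds for every $t$ by the general theory recalled in the introduction (the codimension equals $h^1$ of a generic bundle in the image). Then at $t=0$ we get
\[
{\rm codim}\,\im(c^{l'}(Z)_0)\ \geq\ {\rm codim}\,\im(c^{l'}(Z)_{t_{gen}})\ =\ m(Z,l'),
\]
the last equality being \eqref{eq:gen3int}. The final "In particular" statement then follows because $h^1(Z,\calL)\geq h^1(Z,\calL_{gen}^{im})$ for every $\calL\in\im(c^{l'}(Z))$ — this is again general semicontinuity of $h^1$ along the image, $\calL_{gen}^{im}$ being generic there — together with the identification ${\rm codim}\,\im(c^{l'}(Z)) = h^1(Z,\calL_{gen}^{im})$.

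The main obstacle I expect is making the semicontinuity rigorous in the relative setting: one must check that the effective Cartier divisor spaces $\eca^{l'}(Z)_t$ and the Picard spaces $\pic^{l'}(Z)_t$ genuinely fit into flat families over $T$ (so that "dimension of the image of the fiberwise Abel map" is upper/lower semicontinuous in $t$), and that the generic analytic structure on $\tX$ in the sense of \cite{LaDef1,NNII} really is attained at a very general point of such a family of resolutions with the prescribed graph. A cleaner alternative — which may in fact be the route taken — is to avoid families entirely and argue directly: for each $0\leq Z_1\leq Z$ one shows that the composed map $\eca^{l'}(Z)\to \pic^{l'}(Z)\to \pic^{l'}(Z_1)$ has image of dimension at most $(l',Z_1)+h^1(\calO_{Z_1})-h^1(\calO_{Z_1}^{restricted})$-type bound, using that $\eca^{l'}(Z_1)$ has dimension $(l',Z_1)$ over its own base, and that the map $\eca^{l'}(Z)\to\eca^{l'}(Z_1)$ together with the relevant $h^1$-estimates forces the codimension bound; combined with the lattice-theoretic equality $\chi(E_{|Z_1|})-\chi(Z_1) = h^1(\calO_{Z_1}) - h^1(\calO_{Z_1})|_{gen}$-flavored rewriting, this yields \eqref{eq:gen5int} without invoking the generic case at all. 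I would pursue whichever of these makes the $\chi(E_{|Z_1|})$ term most transparent, since reconciling that term with an $h^1$ on the arbitrary $\tX$ is the one place where a genuinely new estimate (rather than a formal semicontinuity) is needed.
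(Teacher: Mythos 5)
Your overall strategy --- reduce to the generic analytic structure via a semicontinuity argument --- is indeed the philosophy of the paper (the authors themselves phrase (\ref{eq:gen5int}) as a semicontinuity statement), but neither of your two proposed mechanisms is actually carried out, and the first one has a genuine gap exactly where you suspect it. Setting up a relative Abel map over Laufer's parameter space of analytic structures and invoking ``lower semicontinuity of $\dim\im$'' is problematic: the target $\pic^{l'}(Z)_t\simeq H^1(\calO_{Z_t})$ has jumping dimension, so the fiberwise Picard spaces do not form a flat family, and the codimension you want to control is a difference of two quantities ($h^1(\calO_Z)_t$ and $\dim\im_t$) both of which jump at special points; making this rigorous is not easier than the theorem itself. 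Your ``cleaner alternative'' (restrict to subcycles $Z_1\leq Z$ and estimate the image there) is only a gesture --- you do not say what bounds the image dimension over $Z_1$, nor how the term $\chi(E_{|Z_1|})$ enters.

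The missing idea is that the paper has already established, for an \emph{arbitrary} analytic structure, the closed formula ${\rm codim}\,\im(c^{l'}(Z))=\max_{0\leq Z_1\leq Z}\{h^1(\calO_{Z_1})-(l',Z_1)\}$ (identity (\ref{eq:codim}), i.e.\ Corollary \ref{cor:formula3}, a consequence of the second algorithm, Theorem \ref{th:ALGORITHM4}). This converts the delicate semicontinuity of the Abel-map image into the completely standard semicontinuity of the finitely many numbers $h^1(\calO_{Z_1})$, each for a \emph{fixed} cycle $Z_1$: for every $Z_1$, $h^1(\calO_{Z_1})$ computed in $\tX$ is $\geq$ its value in $\tX_{gen}$, hence the maximum can only grow, hence ${\rm codim}\,\im(c^{l'}(Z))(\tX)\geq {\rm codim}\,\im(c^{l'}(Z))(\tX_{gen})$, and the generic value equals the right-hand side of (\ref{eq:gen5int}) by (\ref{eq:gen3})/Proposition \ref{lem:minsimpl}. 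Without (\ref{eq:codim}) your argument does not close. (The paper also gives a second, algorithm-free proof in Section \ref{s:tz}, closer in spirit to your alternative: it restricts $\calL^{im}_{gen}$ to subcycles $Z'=\sum_iZ_i'$, applies the lower bound (\ref{eq:INEQ2}) --- which includes a $+1$ correction for non-dominance --- on each connected component, and then proves combinatorially that the resulting bound $t(Z,l')$ equals the stated topological expression (Corollary \ref{cor:1}); both the non-dominance correction and that combinatorial identification are the substantive steps your sketch omits.) Your justification of the final ``in particular'' chain, via semicontinuity of $h^1$ over $\im(c^{l'}(Z))$, is fine and matches Remark \ref{rem:rem}(3)(a).
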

The right hand side of (\ref{eq:gen5int}) is a sharp  topological lower bound for
${\rm codim}\, \im (c^{l'}(Z))$. The inequality (\ref{eq:gen5int}) can also be interpreted as the semi-continuity statement
$${\rm codim}\, \im( c^{l'}(Z))(\mbox{arbitrary analytic structure})\geq
{\rm codim}\, \im( c^{l'}(Z))(\mbox{generic analytic structure}).$$

\subsection{Generalization.} Sections \ref{s:projAbel} and \ref{s:twisted} target
generalizations of the previous parts, valid for $\{h^1(Z,\calL)\}_{\calL\in \im c^{l'}(Z)}$,
to the shifted case, valid for  $\{h^1(Z,\calL_0\otimes\calL)\}_{\calL\in \im c^{l'}(Z)}$,
where $\calL_0 \in \pic^{l'_0}(Z)$ is a fixed bundle without fixed components. In order
to run a parallel theory based on Abel maps, we have to create the {\it new Abel map}
$c^{l'}_{\calL_0}(Z):\eca^{l'}(Z)\to \pic^{l'}_{\calL_0}(Z)$,
where $\pic^{l'}_{\calL_0}(Z)$ is an affine space associated with the vector space
$\pic^{0}_{\calL_0}(Z)\simeq H^1(Z,\calL_0)$. ($\pic^{l'}_{\calL_0}(Z)$ appears also
as an affine quotient of the classical  $\pic^{l'}(Z)$ as well.) Section \ref{s:projAbel} contains the definitions and the needed exact sequences. Section \ref{s:twisted} contains the extension of the
two algorithms to  this situation.

\section{Preliminaries}\label{s:prel}

\subsection{Notations regarding a good resolution}  \cite{Nfive,trieste,NCL,NNI}
Let $(X,o)$ be the germ of a complex analytic normal surface singularity,
 and let us fix  a good resolution  $\phi:\widetilde{X}\to X$ of $(X,o)$.
Let $E$ be  the exceptional curve $\phi^{-1}(0)$ and  $\cup_{v\in\calv}E_v$ be
its irreducible decomposition. Define  $E_I:=\sum_{v\in I}E_v$ for any subset $I\subset \calv$.

We will assume that each $E_v$ is rational, and the dual graph is a tree. This happens exactly when the link
$M$ of $(X,o)$ is a rational homology sphere.

$L:=H_2(\widetilde{X},\mathbb{Z})$, endowed
with a negative definite intersection form  $(\,,\,)$, is a lattice. It is
freely generated by the classes of  $\{E_v\}_{v\in\mathcal{V}}$.
 The dual lattice is $L'={\rm Hom}_\Z(L,\Z)=\{
l'\in L\otimes \Q\,:\, (l',L)\in\Z\}$. It  is generated
by the (anti)dual classes $\{E^*_v\}_{v\in\mathcal{V}}$ defined
by $(E^{*}_{v},E_{w})=-\delta_{vw}$ (where $\delta_{vw}$ stays for the  Kronecker symbol).
$L'$ is also  identified with $H^2(\tX,\Z)$, where the first Chern classes live.

All the $E_v$--coordinates of any $E^*_u$ are strict positive.
We define the Lipman cone as $\calS':=\{l'\in L'\,:\, (l', E_v)\leq 0 \ \mbox{for all $v$}\}$.
As a monoid it is generated over $\bZ_{\geq 0}$ by $\{E^*_v\}_v$.

$L$ embeds into $L'$ with
 $ L'/L\simeq H_1(M,\mathbb{Z})$,  abridged by $H$.
Each class $h\in H=L'/L$ has a unique representative $r_h\in L'$ in the semi-open cube
$\{\sum_vr_vE_v\in L'\,:\, r_v\in \bQ\cap [0,1)\}$, such that its class  $[r_h]$ is $h$.

There is a natural (partial) ordering of $L'$ and $L$: we write $l_1'\geq l_2'$ if
$l_1'-l_2'=\sum _v r_vE_v$ with all $r_v\geq 0$. We set $L_{\geq 0}=\{l\in L\,:\, l\geq 0\}$ and
$L_{>0}=L_{\geq 0}\setminus \{0\}$.

The support of a cycle $l=\sum n_vE_v$ is defined as  $|l|=\cup_{n_v\not=0}E_v$.

The {\it (anti)canonical cycle} $Z_K\in L'$ is defined by the
{\it adjunction formulae}
$(Z_K, E_v)=(E_v,E_v)+2$ for all $v\in\mathcal{V}$.
%(It is the first Chern class of the dual of the line bundle $\Omega^2_{\tX}$.)
We write $\chi:L'\to \Q$ for the (Riemann--Roch) expression $\chi(l'):= -(l', l'-Z_K)/2$.

\bekezdes \label{bek:natline} {\bf Natural line bundles.} Let
 $\phi:(\tX,E)\to (X,o)$ be as above.
Consider the `exponential'  cohomology exact sequence (with $H^1(\tX, \calO_{\tX}^*)=\pic(\tX)$, the
group of isomorphic classes of holomorphic line bundles on $\tX$,  and
$H^1(\tX, \calO_{\tX})=\pic^0(\tX)$)
\begin{equation}\label{eq:exp}
0\to {\rm Pic}^0(\tX)\longrightarrow
{\rm Pic}(\tX)\stackrel{c_1}{\longrightarrow}H^2(\tX,\Z)\to 0.
\end{equation}
Here $c_1(\calL)\in H^2(\tX,\Z)=L' $ is the first Chern class of $\calL\in \pic(\tX)$.
Since $H^1(M,\Q)=0$, $\pic^0(\tX)\simeq H^1(\tX,\calO_{\tX})\simeq \C^{p_g}$,
where $p_g$ is the geometric genus. Write also  $\pic^{l'}(\tX)=c_1^{-1}(l')$.
Furthermore, see e.g. \cite{OkumaRat,trieste},
there exists a unique homomorphism (split)
 $s_1:L'\to {\rm Pic}(\tX)$  of $c_1$, that is  $c_1\circ s_1=id$, such that
$s_1$ restricted to $L$ is $l\mapsto \calO_{\tX}(l)$.
The line bundles $s_1(l')$ are called {\it natural line
bundles } of $\tX$. For several definitions of
them see  \cite{trieste}.
E.g., $\calL$ is natural if and only if one of its power has the form $\calO_{\tX}(l)$
for some {\it integral} cycle $l\in L$ supported on $E$.
In order to have a uniform notation we write $\calO_{\tX}(l')$ for $s_1(l')$ for any $l'\in L'$.

 For any $Z\geq E$ let $\calO_Z(l')$ be the
restriction of the natural line bundle $\calO_{\tX}(l')$ to $Z$. In fact, $\calO_Z(l')$
can be defined in an identical way as $\calO_{\tX}(l')$ starting from
the exponential cohomological sequence
$0\to \pic^0(Z)\to  \pic(Z)\to H^2(\tX,\Z)\to 0$ as well. Set also $\pic^{l'}(Z)=c_{1,Z}^{-1}(l')$.

\subsection{The Abel map \cite{NNI}}\label{ss:AbelMap}
For any $Z\geq E$ let
$\eca(Z)$  be the space of (analytic) effective Cartier divisors on %$\widetilde{X}$ and
$Z$. Their supports are zero--dimensional in $E$.
Taking the line bundle  of a Cartier divisor provides  the {\it Abel map}
$c=c(Z):\eca(Z)\to \pic(Z)$.
Let
$\eca^{l'}(Z)$ be the set of effective Cartier divisors with
Chern class $l' \in L'$, i.e.
$\eca^{l' }(Z):=c^{-1}(\pic^{l'}(Z))$.
The restriction of $c$ is denoted by   $c^{l'}:\eca^{l'}(Z)\to \pic^{l'}(Z)$.

%%We also use the notation
%$\eca^{l'}(Z):=\eca^{R(l')}(Z)$ and $\pic^{l'}(Z):= \pic^{R(l')}(Z)$ for any $l'\in L'$,
%where $R:L'\to L'(|Z|)$  is the cohomological restriction,  dual to the
%inclusion $L(|Z|)\hookrightarrow L$. (This means that
%$R(E^*_v)=$the (anti)dual of $E_v$ in the lattice $L'(|Z|)$ if $E_v\subset |Z|$ and  $R(E^*_v)=0$ %otherwise.)

A line bundle $\calL\in \pic^{l'}(Z)$ is in the image
 ${\rm im}(c^{l'})$ if and only if it has a section without fixed components, that is,
 if $H^0(Z,\calL)_{reg}\not=\emptyset $, where
$H^0(Z,\calL)_{reg}:=H^0(Z,\calL)\setminus \cup_v H^0(Z-E_v, \calL(-E_v))$.
By this definition (see (3.1.5) of \cite{NNI}) $\eca^{l'}(Z)\not=\emptyset$ if and only if
$-l'\in \calS'\setminus \{0\}$. It is advantageous to have a similar statement for
$l'=0$ too, hence we redefine  $\eca^0(Z)$ as $\{\emptyset\}$, a set/space with one element
(the empty divisor), and $c^0:\eca ^0(Z)\to \pic^0(Z)$ by $c^0(\emptyset)=\calO_Z$.
In particular,
\begin{equation}\label{eq:Chernzero1}
H^0(Z,\calL)_{reg}\not=\emptyset\ \Leftrightarrow\ \calL=\calO_Z\
\Leftrightarrow\ \calL\in {\rm im}(c^0) \ \ \mbox{ whenever $c_1(\calL)=0$}.\end{equation}
Hence, the extended statement valid for any $l'$ is:
\begin{equation}\label{eq:Chernzero}
 \eca^{l'}(Z)\not=\emptyset\ \Leftrightarrow \ -l'\in \calS'.
\end{equation}
Sometimes  even for $\calL\in\pic^{l'}(\tX)$ we write
$\calL\in \im(c^{l'})$ whenever $\calL|_Z\in  \im(c^{l'}(Z))$ for some $Z\gg0$. This happens
if and only if
$\calL\in \pic(\tX) $ has no fixed components.

It turns out that
$\eca^{l'}(Z)$ ($-l'\in\calS'$) is a smooth complex algebraic variety  of dimension $(l',Z)$ and
the Abel map  is an algebraic regular map. For more properties
and applications see \cite{NNI,NNII}.

\bekezdes\label{bek:modAbel} {\bf The modified Abel map.}  Multiplication by $\calO_{Z}(-l')$  gives
an  isomorphism of the affine spaces $\pic^{l'}(Z)\to \pic^0(Z)$. Furthermore, we identify
(via the exponential exact sequence) $\pic^0(Z)$ with the vector space
$H^1(Z, \calO_Z)$.

It is convenient to replace the Abel map $c^{l'}$ with the composition
$$\widetilde{c}^{l'}:\eca^{l'}(Z)\stackrel{c^{l'}}{\longrightarrow} \pic^{l'}(Z)\stackrel{\calO_Z(-l')}
{\longrightarrow} \pic^0(Z)\stackrel{\simeq}{\longrightarrow} H^1(\calO_Z).$$
The advantage of this new set of maps is that all the images sit in the same
vector  space $H^1(\calO_Z)$.

%\bekezdes \label{bek:addDiv} {\bf The monoid structure of divisors and of the modified Abel map.}
Consider the natural additive structure
$s^{l'_1,l'_2}(Z):\eca^{l'_1}(Z)\times \eca^{l'_2}(Z)\to \eca^{l'_1+l'_2}(Z)$ ($l'_1,l'_2\in-\calS'$)
provided by the sum of the divisors. One verifies (see e.g. \cite[Lemma 6.1.1]{NNI}) that
$s^{l'_1,l'_2}(Z)$ is dominant and quasi--finite. % (however, in general, it is not surjective).
There is a parallel multiplication $\pic^{l'_1}(Z)\times \pic^{l'_2}(Z)\to \pic^{l'_1+l'_2}(Z)$,
$(\calL_1,\calL_2)\mapsto \calL_1\otimes\calL_2$, which satisfies
 $c^{l_1'+l'_2}\circ s^{l'_1,l'_2}= c^{l'_1} \otimes c^{l'_2}$ in $\pic^{l'_1+l'_2}$.
This, in the modified case, using
$\calO_Z(l'_1+l'_2)=\calO_Z(l'_1)\otimes \calO_Z(l'_2)$,
 reads as
 $\widetilde{c}^{l_1'+l'_2}\circ s^{l'_1,l'_2}= \widetilde{c}^{l'_1}
+ \widetilde{c}^{l'_2}$ in $H^1(\calO_Z)$.

\begin{definition}\label{def:VZ}
For any  $l'\in-\calS'$  let $A_Z(l')$
be the smallest dimensional affine subspace of
$H^1(\calO_Z)$ which contains $\im (\widetilde{c}^{l'})$. Let $V_Z(l')$,
 be the parallel vector subspace
of $H^1(\calO_Z)$,
the translation of  $A_Z(l')$ to the origin. %Set $d_Z(l'):=\dim A_Z(l')$.

For any $I\subset \calv$, $I\not=\emptyset$,
let $(X_I,o_I)$ be the multigerm $\tX/_{\cup_{v \in I}E_v}$
at its  singular points,
obtained by contracting the connected components of
$\cup_{v \in I}E_v$ in $\tX$. If $I=\emptyset$ then by convention $(X_I,o_I)$ is a smooth germ.
\end{definition}

\begin{theorem}\label{prop:AZ} \ \cite[Prop. 5.6.1, Lemma 6.1.6 and Th. 6.1.9]{NNI}
Assume that  $Z\geq E$.

(a)   For any $-l'=\sum_va_vE^*_v\in \calS'$ let the $E^*$--support of $l'$ be
$I(l'):=\{v\,:\, a_v\not=0\}$. Then  $V_Z(l')$ depends only on $I(l')$.
(This motivates to write $V_Z(l')$ as $V_Z(I)$ where $I=I(l')$.)

(b)  $V_Z(I_1\cup I_2)=V_Z(I_1)+V_Z(I_2)$ and $A_Z(l_1'+l_2')= A_Z(l_1')+A_Z(l_2')$.

(c)  $\dim V_Z(I)=h^1(\calO_Z)-h^1(\calO_{Z|_{\calv\setminus I}})$.
%= p_g(X,o)-p_g(X_{\calv\setminus I},o_{\calv\setminus I})$.

(d)  If $\calL^{im}_{gen}$ is a generic bundle of \,$\im (c^{l'})$
then $h^1(Z,\calL^{im}_{gen})=h^1(\calO_Z)-\dim(\im (c^{l'}))$.

(e)  For $n\gg 1$ one has $\im(\widetilde{c}^{nl'})=A_Z(nl')$, and $h^1(Z,\calL)=
h^1(\calO_Z)-\dim V_Z(I)=
%p_g(X_{\calv\setminus I(l')},o_{\calv\setminus I(l')})$
 h^1(\calO_{Z|_{\calv\setminus I}})$
for any $\calL\in \im (c^{nl'})$.

\end{theorem}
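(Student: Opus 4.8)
The plan is to derive all five items from the additive structure of the Abel maps, in the order (b), (a), (d), (e), (c). The linchpin is \textbf{item (b)}. Starting from $\widetilde{c}^{\,l'_1+l'_2}\circ s^{l'_1,l'_2}=\widetilde{c}^{\,l'_1}+\widetilde{c}^{\,l'_2}$ in $H^1(\calO_Z)$ and the dominance of $s^{l'_1,l'_2}(Z)$, the image of $\widetilde{c}^{\,l'_1+l'_2}$ has the same closure as the Minkowski sum $\im(\widetilde{c}^{\,l'_1})+\im(\widetilde{c}^{\,l'_2})$. Since the smallest affine subspace containing a Minkowski sum $S_1+S_2$ equals the (affine) sum of the smallest affine subspaces containing $S_1$ and $S_2$ (an elementary affine-combination argument, unaffected by passing to closures), this gives $A_Z(l'_1+l'_2)=A_Z(l'_1)+A_Z(l'_2)$; translating to the origin yields $V_Z(l'_1+l'_2)=V_Z(l'_1)+V_Z(l'_2)$. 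For \textbf{item (a)}, iterating (b) shows $V_Z(nl')=V_Z(l')$ for all $n\ge 1$ (a vector subspace is idempotent under sum). If $I(l'_1)=I(l'_2)=I$, then for $n\gg 1$ the class $l'_3:=nl'_1-l'_2$ lies again in $-\calS'$, because in the $\{E^*_v\}$-basis its coordinates are $n$ times those of $l'_1$ minus those of $l'_2$, which are nonnegative once $n$ is large since the two supports coincide; hence $V_Z(l'_1)=V_Z(nl'_1)=V_Z(l'_2)+V_Z(l'_3)\supseteq V_Z(l'_2)$, and the symmetric argument gives equality, so that $V_Z(l')$ depends only on $I$ and may be written $V_Z(I)$.

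For \textbf{item (d)} I would run a fibre-dimension count for the regular map $c^{l'}$ on the smooth irreducible variety $\eca^{l'}(Z)$ of dimension $(l',Z)$. The fibre over $\calL\in\im(c^{l'})$ is identified with $\bP$ of the non-zero-divisor sections of $\calL$, i.e. a dense open subset of $\bP(H^0(Z,\calL))$, of dimension $h^0(Z,\calL)-1$. Riemann--Roch on the Artinian scheme $Z$ --- obtained from $0\to\calL_{\tX}(-Z)\to\calL_{\tX}\to\calL_{\tX}|_Z\to0$ on $\tX$ plus Riemann--Roch on the surface and the identity $K_{\tX}=-Z_K$ --- gives $h^0(Z,\calL)-h^1(Z,\calL)=(l',Z)+1-h^1(\calO_Z)$, a quantity independent of $\calL$. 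By semicontinuity of $h^0$ on the irreducible $\im(c^{l'})$, the generic fibre has minimal dimension $(l',Z)-h^1(\calO_Z)+h^1(Z,\calL^{im}_{gen})$, whence $\dim\im(c^{l'})=(l',Z)$ minus this, i.e. $h^1(\calO_Z)-h^1(Z,\calL^{im}_{gen})$.

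For \textbf{item (e)} I first show $\im(\widetilde{c}^{\,nl'})=A_Z(nl')$ for $n\gg 1$. Put $S_n:=\overline{\im(\widetilde{c}^{\,nl'})}$; by the argument of (b) each $S_{n+1}$ is the closure of $S_n+\im(\widetilde{c}^{\,l'})$, so $\dim S_n$ is non-decreasing and stabilizes. Once it is constant, $S_n+x\subseteq S_{n+1}$ for a generic $x\in\im(\widetilde{c}^{\,l'})$ forces $S_{n+1}=S_n+x$ (equality of irreducible sets of equal dimension), so $S_n$ is invariant under all differences $x-x'$ of generic points of $\im(\widetilde{c}^{\,l'})$, hence under $V_Z(I)$; being irreducible of dimension $\dim V_Z(I)$ it is a single $V_Z(I)$-coset, namely $A_Z(nl')$, and a Minkowski-sum argument upgrades this to the constructible image itself. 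Consequently, for $n\gg1$ every $\calL\in\im(c^{nl'})$ is generic, so by (d) $h^1(Z,\calL)=h^1(\calO_Z)-\dim\im(\widetilde{c}^{\,nl'})=h^1(\calO_Z)-\dim V_Z(I)$. It remains to identify this with $h^1(\calO_{Z|_{\calv\setminus I}})$: for $n$ large one represents $\calL$ by a generic effective divisor $D$ concentrated on $\cup_{v\in I}E_v$ away from a fixed neighbourhood of $Z|_{\calv\setminus I}$, so that $\calL$ is trivial near $Z|_{\calv\setminus I}$ and the sequence $0\to\calF\to\calL\to\calL|_{Z|_{\calv\setminus I}}\cong\calO_{Z|_{\calv\setminus I}}\to0$ holds; then one shows $H^0(Z,\calL)\to H^0(\calO_{Z|_{\calv\setminus I}})$ is onto (the section cutting $D$ is a unit near $Z|_{\calv\setminus I}$) and $h^1(\calF)=0$ (the bundle $\calF$ on the $I$-part is "negative enough" for $n\gg1$), which together give $h^1(Z,\calL)=h^1(\calO_{Z|_{\calv\setminus I}})$. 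Finally \textbf{item (c)} follows by combining (a), (e) and (d): for $n\gg1$, $\dim V_Z(I)=\dim A_Z(nl')=\dim\im(\widetilde{c}^{\,nl'})=h^1(\calO_Z)-h^1(Z,\calL^{im}_{gen})=h^1(\calO_Z)-h^1(\calO_{Z|_{\calv\setminus I}})$.

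The genuine difficulty is the last step of (e), equivalently the last equality of (c): computing $h^1$ of a generic fixed-component-free bundle whose Chern class is $E^*$-supported on $I$ and large, i.e. the surgery identity $h^1(Z,\calL^{im}_{gen})=h^1(\calO_{Z|_{\calv\setminus I}})$. This is exactly where analyticity rather than topology enters (and where the semicontinuity phenomena emphasized in the introduction originate): one must prove that pushing a generic effective divisor deep along the $E_v$ with $v\in I$ cohomologically decouples $Z$ into its $\calv\setminus I$ part (which retains its $h^1$) plus a part that is generically annihilated, which amounts to the surjectivity and the vanishing $h^1(\calF)=0$ mentioned above. Everything else --- items (a), (b), (d), and the geometric half of (e) --- is formal once the additive structure, the affine-span bookkeeping, and the Riemann--Roch computation are in place.
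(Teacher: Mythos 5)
First, a caveat: the paper does not prove this statement at all --- it is imported verbatim from \cite[Prop. 5.6.1, Lemma 6.1.6, Th. 6.1.9]{NNI} --- so your proposal can only be measured against the source, whose overall architecture (additivity of $\widetilde{c}$ under the sum maps $s^{l_1',l_2'}$, stabilization of $\im(\widetilde{c}^{nl'})$, a vanishing argument for $n\gg 0$) you reconstruct in broadly the right way. Items (a) and (b) are fine as written.

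There are, however, two concrete faults. In (d) you assert that the fibre of $c^{l'}$ over $\calL$ is a dense open subset of $\bP(H^0(Z,\calL))$ of dimension $h^0(Z,\calL)-1$, and that Riemann--Roch gives $h^0-h^1=(l',Z)+1-h^1(\calO_Z)$. Both are false for non-reduced $Z$: two sections cut out the same Cartier divisor iff they differ by a global unit, so the fibre is $H^0(Z,\calL)_{reg}/H^0(\calO_Z)^*$, of dimension $h^0(Z,\calL)-h^0(\calO_Z)$, and the Riemann--Roch constant is $(l',Z)+\chi(Z)=(l',Z)+h^0(\calO_Z)-h^1(\calO_Z)$, where $h^0(\calO_Z)>1$ in general (e.g. $h^0(\calO_{2E_v})=4$ for a single $(-2)$-curve). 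The two errors cancel in the final formula, but each intermediate claim is wrong and should be replaced by the correct fibre description. In (e), the sentence ``for $n\gg 1$ every $\calL\in\im(c^{nl'})$ is generic'' is not an argument: knowing that $\im(\widetilde{c}^{nl'})$ fills the affine space $A_Z(nl')$ says nothing about the jump locus of $\calL\mapsto h^1(Z,\calL)$ inside it, and the constancy of $h^1$ on the whole image is precisely the nontrivial content of (e). The repair is latent in your own last paragraph: for \emph{any} $D\in\eca^{nl'}(Z)$ the support of $D$ is disjoint from $E_u$ for $u\notin I$ (the divisor meets $E_u$ in a scheme of length equal to the $E_u^*$-coefficient of $-nl'$, which is zero), so $\calL|_{Z|_{\calv\setminus I}}\cong\calO_{Z|_{\calv\setminus I}}$, and the decomposition sequence $0\to\calL\otimes\calO_{Z|_I}(-Z|_{\calv\setminus I})\to\calL\to\calL|_{Z|_{\calv\setminus I}}\to 0$ together with the vanishing of $h^1$ of the kernel for $n\gg 0$ gives $h^1(Z,\calL)=h^1(\calO_{Z|_{\calv\setminus I}})$ for every such $\calL$, not just the generic one. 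Finally, your ``Minkowski-sum upgrade'' from $\overline{\im(\widetilde{c}^{nl'})}=A_Z(nl')$ to equality of the actual constructible image tacitly uses density of $\im(\widetilde{c}^{l'})$ in $A_Z(l')$, which fails exactly when $d_Z(l')<e_Z(l')$; sum two copies of $\im(\widetilde{c}^{nl'})$ (both dense in their affine spans) instead. With these three repairs the proof goes through and is essentially the argument of \cite{NNI}.
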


 For different geometric reinterpretations of $\dim V_Z(I)$ see  also \cite[\S 9]{NNI}.

\subsection{} \label{ss:DOMINANT}
Theorem 4.1.1 of \cite{NNI} says that $c^{l'}(Z)$ is dominant if and only if
$ \chi(-l')<\chi(-l'+l)$ for any $0<l\leq Z$. In particular, the  dominance of
$c^{l'}(Z)$ is a topological property. If $c^{l'}(Z)$ is dominant then  $c^{l'}(Z')$
is dominant for any $0< Z'\leq Z$.

\subsection{Review of Laufer Duality \cite{Laufer72},
\cite[p. 1281]{Laufer77}}\label{ss:LD} \
Following Laufer, we identify the dual space $H^1(\tX,\cO_{\tX})^*$ with the space of global holomorphic
2-forms on $\tX\setminus E$ up to the subspace of those forms which can be extended
holomorphically over $\tX$.

For this, use first  Serre duality
%If  $D$ is a divisor supported on $E$ and $\cF$ is a locally free sheaf on
%$\tX$ then $H^0(\cX\setminus E,\cF(D))=H^0(\cX\setminus E,\cF)$
%and $H^0(\tX\setminus E,\cF)/H^0(\tX,\cF)$ is finite dimensional since it
%embeds into $H^1_c(\tX,\cF)$. Furthermore,
%$H^i(\tX,\cF)=H^{2-i}(\tX,\cF^*\otimes \Omega^2_{\tX})^*$ by Serre duality.  In particular,
$H^1(\tX,\cO_{\tX})^*\simeq
H^1_c(\tX,\Omega^2_{\tX})$. Then,   in the exact sequence
$$0\to H^0_c(\tX,\Omega^2_{\tX}) \to
 H^0(\tX,\Omega^2_{\tX}) \to
H^0(\tX\setminus E,\Omega^2_{\tX})\to
H^1_c(\tX,\Omega^2_{\tX})\to
H^1(\tX,\Omega^2_{\tX})$$
$H^0_c(\tX,\Omega^2_{\tX})=H^2(\tX,\cO_{\tX})^*=0$ by dimension argument, while $H^1(\tX,\Omega^2_{\tX})=0$ by the Grauert--Riemenschneider
vanishing. Hence,
%\begin{lemma}
\begin{equation}\label{eq:LD}
H^1(\tX,\cO_{\tX})^*\simeq  H^1_c(\tX,\Omega^2_{\tX})\simeq
H^0(\tX\setminus E,\Omega^2_{\tX})/ H^0(\tX,\Omega^2_{\tX}).\end{equation}
%the space of global holomorphic
%2-forms on $\tX\setminus E$ up to those which can be extended over $\tX$.
%\end{lemma}

\bekezdes \label{bek:Z}
Above $H^0(\tX\setminus E,\Omega^2_{\tX})$ can be replaced by
$H^0(\tX,\Omega^2_{\tX}(Z))$ for a large cycle $Z$
(e.g. for $Z\geq \lfloor Z_K \rfloor$). Indeed, for any cycle $Z>0$ from the exacts sequence of sheaves
$0\to\Omega^2_{\tX}\to \Omega^2_{\tX}(Z)\to \calO_{Z}(Z+K_{\tX})\to 0$ and from the vanishing
$h^1(\Omega^2_{\tX})=0$ and Serre duality one has
\begin{equation}\label{eq:duality}
H^0(\Omega^2_{\tX}(Z))/H^0(\Omega^2_{\tX})=H^0(\calO_Z(Z+K_{\tX}))\simeq H^1(\calO_Z)^*.
\end{equation}
Since $H^1(\calO_Z)\simeq H^1(\calO_{\tX})$ for $Z\geq \lfloor Z_K \rfloor$, the natural inclusion
\begin{equation}\label{eq:inclusion}
H^0(\Omega^2_{\tX}(Z))/H^0( \Omega^2_{\tX})\hookrightarrow
H^0(\tX\setminus E, \Omega^2_{\tX})/H^0(\Omega^2_{\tX})
\end{equation}
is an isomorphism.

This pairing reduces  to a perfect pairing at the level of an arbitrary $Z>0$, cf. \cite[7.4]{NNI}.
 Indeed, consider the above perfect pairing  $\langle \cdot,\cdot \rangle:
H^1(\tX,\calO_{\tX})\otimes H^0(\tX\setminus E, \Omega^2_{\tX})/H^0(\Omega^2_{\tX})\to \C$
given via integration
 of class representatives.  In $H^1(\tX,\calO_{\tX})$ let $A$ be the image of $H^1(\tX,\calO_{\tX}(-Z))$, hence $H^1(\tX,\calO_{\tX})/A=H^1(\calO_Z)$. On the other hand,
 in $H^0(\tX\setminus E, \Omega^2_{\tX})/H^0(\Omega^2_{\tX})$ consider the subspace $B:=H^0(\Omega^2_{\tX}(Z))/H^0(\Omega^2_{\tX})$ of dimension $h^1(\calO_Z)$
(cf. (\ref{eq:duality})). Since $\langle A,B\rangle=0$, the pairing factorizes to a perfect
pairing $H^1(\calO_Z)\otimes H^0(\Omega^2_{\tX}(Z))/H^0(\Omega^2_{\tX})\to \C$.
It can be described by the very same integral form of the corresponding class representatives.

\bekezdes \label{bek:diff forms} {\bf The linear subspace arrangement $\{V_Z(I)\}_I\subset
H^1(\calO_Z)$ and differential forms.}
 The arrangement $\{V_Z(I)\}_I$ transforms into a
  linear subspace arrangement of $H^0(\Omega^2_{\tX}(Z))/H^0(\Omega^2_{\tX})$ via the (Laufer)
  non--degenerate pairing $H^1(\calO_Z)\otimes   H^0(\Omega^2_{\tX}(Z))/H^0(\Omega^2_{\tX})\to\C$
 as follows. Let $\Omega_Z(I)$ be the subspace
  $H^0(\Omega^2_{\tX}(Z|_{\calv\setminus I}))/H^0(\Omega^2_{\tX})$ in
  $H^0(\Omega^2_{\tX}(Z))/H^0(\Omega^2_{\tX})$, that is, the subspace generated by those forms which
  have no poles along generic points of any $E_v$, $v\in I$.
  \begin{proposition}\label{prop:HDIZ} \ \cite[8.3]{NNI} Via Laufer duality
$V_Z(I)= \Omega_Z(I)^\perp=\{x:\langle x,\Omega_Z(I)\rangle=0\}$ for $Z\geq E$.
\end{proposition}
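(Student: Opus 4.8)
The plan is to identify $V_Z(I)$ with the kernel of a restriction map on $H^1$, and then to transpose this identification across Laufer duality. Write $Z':=Z|_{\calv\setminus I}$ and let $\rho\colon H^1(\calO_Z)\to H^1(\calO_{Z'})$ be the natural restriction. In the structure exact sequence $0\to\calF\to\calO_Z\to\calO_{Z'}\to 0$ the kernel $\calF$ is supported on $\cup_{v\in I}E_v$, hence $H^2(\calF)=0$ and $\rho$ is onto; so $\dim\ker\rho=h^1(\calO_Z)-h^1(\calO_{Z'})$, which by Theorem \ref{prop:AZ}(c) equals $\dim V_Z(I)$. Thus it suffices to prove: {\it (a)} $V_Z(I)\subseteq\ker\rho$; and {\it (b)} $\ker\rho=\Omega_Z(I)^\perp$ for the Laufer pairing of \ref{bek:Z}. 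The cases $I=\emptyset$ (both sides $\{0\}$) and $I=\calv$ (both sides $H^1(\calO_Z)$) follow directly from the definitions, so I would assume $\emptyset\neq I\subsetneq\calv$.

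For {\it (a)}, I would show that $\rho\circ\widetilde{c}^{l'}(Z)\colon\eca^{l'}(Z)\to H^1(\calO_{Z'})$ is constant. Writing $-l'=\sum_{v\in I}a_vE^*_v$ with all $a_v>0$, every divisor $D\in\eca^{l'}(Z)$ has support disjoint from $|Z'|=\cup_{w\notin I}E_w$: if $\supp D$ met $E_w$ then $a_w=(l',E_w)\geq 1$, forcing $w\in I$ (for the relevant description of $\eca^{l'}(Z)$ see \cite[\S3]{NNI}). Since $D$ misses $|Z'|$, the tautological section of $\calO_Z(D)$ restricts to a nowhere vanishing section of $\calO_Z(D)|_{Z'}$, so $\calO_Z(D)|_{Z'}\cong\calO_{Z'}$ canonically; moreover the Chern classes of $\calO_Z(D)|_{Z'}$ and $\calO_Z(-l')|_{Z'}$ in $H^2(Z'_{\rm red},\Z)$ vanish because $(l',E_w)=0$ for $w\notin I$. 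Hence $\widetilde{c}^{l'}(Z)(D)|_{Z'}=[\calO_{Z'}(-l')]\in H^1(\calO_{Z'})$ does not depend on $D$. Therefore $\im\widetilde{c}^{l'}(Z)$ lies in a single fibre of $\rho$, an affine subspace whose underlying vector space is $\ker\rho$; so $A_Z(l')$ is contained in that fibre, hence $V_Z(I)\subseteq\ker\rho$, and the dimension count upgrades this to $V_Z(I)=\ker\rho$.

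For {\it (b)}, recall from \ref{bek:Z}--\ref{bek:diff forms} that Laufer duality identifies $H^1(\calO_Z)^*$ with $H^0(\Omega^2_{\tX}(Z))/H^0(\Omega^2_{\tX})$ and $H^1(\calO_{Z'})^*$ with $H^0(\Omega^2_{\tX}(Z'))/H^0(\Omega^2_{\tX})$, both via the same integration pairing on form representatives. Under these identifications the transpose of $\rho$ is the natural inclusion $H^0(\Omega^2_{\tX}(Z'))/H^0(\Omega^2_{\tX})\hookrightarrow H^0(\Omega^2_{\tX}(Z))/H^0(\Omega^2_{\tX})$ induced by $Z'\leq Z$, since the integral of a cohomology class against a $2$--form with pole $\leq Z'$ is unchanged when the class is first pushed from $H^1(\calO_Z)$ to $H^1(\calO_{Z'})$. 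The image of this inclusion is, by definition, exactly $\Omega_Z(I)=H^0(\Omega^2_{\tX}(Z|_{\calv\setminus I}))/H^0(\Omega^2_{\tX})$. For a surjection $\rho$ and its transpose, $\ker\rho$ is the annihilator of the image of the transpose, so $\ker\rho=\Omega_Z(I)^\perp$; combined with {\it (a)}, this gives $V_Z(I)=\Omega_Z(I)^\perp$.

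The hard part will be the functoriality statement used in {\it (b)}: that the transpose of the $H^1$--restriction $\rho$ is precisely the inclusion of the space of $2$--forms with pole bounded by $Z'$ into the one with pole bounded by $Z$. This is a compatibility between the Laufer/Serre pairings for $Z$ and for $Z'$ across the maps $\calO_Z\twoheadrightarrow\calO_{Z'}$ and $\Omega^2_{\tX}(Z')\hookrightarrow\Omega^2_{\tX}(Z)$; it is implicit in the description of the pairing via class representatives in \cite{Laufer72,Laufer77} and \cite[\S7]{NNI}, but it should be checked against the explicit commutative square of pairings. A secondary, bookkeeping obstacle is the Chern--class verification in {\it (a)}, which rests on the $E^*$--support conventions of \cite{NNI} but is routine.
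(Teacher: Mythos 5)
The paper does not prove this proposition at all --- it is quoted verbatim from \cite[8.3]{NNI} --- so there is no internal proof to match; judged on its own, your argument is correct and self-contained modulo the results the present paper already imports. Your route differs from the mechanism the ambient text actually uses: around (\ref{eq:dual}) and in Lemma \ref{lem:dualInt} the inclusion $V_Z(I)\subseteq \Omega_Z(I)^\perp$ is obtained infinitesimally, by pairing forms against the image of the tangent map $T_D\widetilde{c}^{l'}$ via the integral formula of \cite[\S 7]{NNI}, whereas you prove the stronger global statement that $\rho\circ\widetilde{c}^{l'}$ is literally constant (the canonical section of $\calO_Z(D)$ trivializes the restriction to $Z'=Z|_{\calv\setminus I}$ because $\supp D$ avoids $|Z'|$), which is cleaner and avoids any differentiation; both routes then consume the same nontrivial input, namely the dimension formula of Theorem \ref{prop:AZ}(c), to upgrade an inclusion to an equality. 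The step you flag as the ``hard part'' --- that the transpose of $\rho:H^1(\calO_Z)\twoheadrightarrow H^1(\calO_{Z'})$ under the Laufer pairing is the inclusion $\calG_{Z'}\hookrightarrow \calG_Z$ with image $\Omega_Z(I)$ --- is exactly what Definition \ref{bek:filtrforms} asserts (``dual to the natural epimorphisms''), and it follows from the construction in \ref{bek:Z}: both level-$Z$ pairings are induced from the single pairing on $H^1(\calO_{\tX})\otimes H^0(\tX\setminus E,\Omega^2_{\tX})/H^0(\Omega^2_{\tX})$ by passing to the quotient by $A=\im H^1(\calO_{\tX}(-Z))$ on one side and to the annihilator subspace $B=H^0(\Omega^2_{\tX}(Z))/H^0(\Omega^2_{\tX})$ on the other, and for nested $A\subset A'$ the transpose of $V/A\to V/A'$ is the inclusion $(A')^\perp\hookrightarrow A^\perp$ by elementary linear algebra. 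Two small caveats: your identity $\rho(\widetilde c^{l'}(D))=[\calO_{Z'}(-l')]$ tacitly uses that restriction commutes with the exponential identification $\pic^0\simeq H^1(\calO)$, which is just functoriality and worth one sentence; and the argument rests on Theorem \ref{prop:AZ}(c), which in \cite{NNI} lives close to the statement being proved, so in the source this proposition and that dimension count are really two faces of one computation rather than independent facts.
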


\bekezdes\label{bek:OmegaD}
  Furthermore, for any $l'\in -\calS'\setminus \{0\}$ consider a divisor
  $D \in \eca^{l'}(Z) $, which is a union of $(l', E)$ disjoint divisors  $\{D_i\}_i $,
 each of them $\cO_Z$--reduction of reduced divisors $\{\widetilde{D}_i\}_i $ of $\tX$
 intersecting  $E$  transversally.
Set  $\widetilde{D}=\cup_i\widetilde{D}_i $ and  $\calL:=\widetilde{c}^{l'}(D)\in  H^1(\calO_Z)$.
Write  also $Z=\sum_{v\in\calv}r_vE_v$.

We introduce a subsheaf $\Omega_{\tX}^2(Z)^{{\rm regRes}_{\widetilde{D}}}$
of  $\Omega_{\tX}^2(Z)$ consisting of those forms $\omega$
which have the property that the residue ${\rm Res}_{\widetilde{D}_i}(\omega)$
has no poles along $\widetilde{D}_i$ for all $i$. This means that the restrictions of  $\Omega_{\tX}^2(Z)^{{\rm regRes}_{\widetilde{D}}}$ and    $\Omega_{\tX}^2(Z)$
on the complement of the support of $\widetilde{D}$ coincide, however along $\widetilde{D}$
one has the following local picture.
Introduce near $p=E\cap \widetilde{D}_i=E_{v_i}\cap \widetilde{D}_i$   local coordinates
$(u,v)$ such that  $\{u=0\}=E$ and  $\widetilde{D}_i$ has  local equation $v$. Then a local section of $\Omega_{\tX}^2(Z)$  in this system
has the form $\omega=\sum_{ k \geq -r_{v_i}, j\geq 0} a_{k,j}u^k v^j du\wedge dv$. Then, by definition,
 the residue ${\rm Res}_{\widetilde{D}_i}(\omega)$ is
$(\omega/dv)|_{v=0}=\sum_k a_{k,0}u^k du$, hence the pole--vanishing reads as $a_{k,0}=0$ for all $k<0$.
%This means , while in a bidisc containing $\widetilde{D}_i$
%the local sections of $\Omega_{\tX}^2(Z)^{{\rm regRes}_{\widetilde{D}}}$
%satisfy  $a_{0, j}=0$ for all $j<0$.
Note that $\Omega_{\tX}^2(Z-\widetilde{D}) $ and the sheaf of regular forms  $\Omega_{\tX}^2$ are
subsheaves of $\Omega_{\tX}^2(Z)^{{\rm regRes}_{\widetilde{D}}}$.

Set $\Omega _Z(D):=H^0(\tX,\Omega_{\tX}^2(Z)^{{\rm regRes}_{\widetilde{D}}})/ H^0(\tX,\Omega_{\tX}^2)$.
This can be regarded as a subspace of $H^1(\calO_Z)^*=H^0(\tX,\Omega^2_{\tX}(Z))/H^0(\tX,\Omega_{\tX}^2)
$.

\begin{theorem}\label{th:Formsres} \ \cite[Th. 10.1.1]{NNI}
In the above situation one has the following facts.

(a) The sheaves $\Omega_{\tX}^2(Z)^{{\rm regRes}_{\widetilde{D}}}/\Omega_{\tX}^2$ and $\calO_{Z}(K_{\tX}+Z-D)$ are isomorphic.

(b) $H^1(Z,\calL)^*\simeq\Omega_Z(D)$.
%H^0(\tX,\Omega_{\tX}^2(Z)^{{\rm regRes}_{\widetilde{D}}})/ H^0(\tX,\Omega_{\tX}^2)$.
%(This can be regarded as a subspace of $H^1(Z,\calO_Z)=H^0(\tX,\Omega^2_{\tX}(Z))/H^0(\tX,\Omega_{\tX}^2)$.)

(c) The image $(T_D\widetilde{c}) (T_{D} \eca^{l'}(Z))$ of the tangent map at $D$ of
$\widetilde{c}:\eca^{l'}(Z)\to H^1(\calO_Z)$ is the intersection of kernels of
linear maps $T_{\calL}\omega:T_{\calL}H^1(\calO_Z)\to\bC$, where $\omega\in
H^0(\tX,\Omega_{\tX}^2(Z)^{{\rm regRes}_{\widetilde{D}}})$.
%In particular, $\im (T_D\widetilde{c})=\Omega_Z(D)^\perp$.
\end{theorem}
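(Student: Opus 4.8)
The plan is to prove the three assertions of Theorem \ref{th:Formsres} essentially in the order (a), then (c), then (b), since (a) is the local computation that feeds the other two, while (b) is a corollary of (c) plus duality.

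\textbf{Step 1: the local identification (a).} First I would work entirely locally near each transversal intersection point $p=E_{v_i}\cap\widetilde D_i$ in the coordinates $(u,v)$ with $\{u=0\}=E_{v_i}$ and $\{v=0\}=\widetilde D_i$. A section of $\Omega^2_{\tX}(Z)$ has the stated Laurent shape $\omega=\sum_{k\ge -r_{v_i},\,j\ge 0}a_{k,j}u^kv^j\,du\wedge dv$, and the sheaf $\Omega^2_{\tX}(Z)^{{\rm regRes}_{\widetilde D}}$ is cut out by the conditions $a_{k,0}=0$ for $k<0$. I would exhibit the $\calO_{\tX}$-linear isomorphism at the stalk level by the rule $\omega\mapsto$ (class of) $\omega$ read as a section of the line bundle $\calO_{\tX}(K_{\tX}+Z-\widetilde D)$ twisted down: the extra vanishing $a_{k,0}=0$ for $k<0$ is exactly the condition that $\omega$, viewed modulo $\Omega^2_{\tX}$, lies in the image of $\Omega^2_{\tX}(Z-\widetilde D)$ at the generic point of $\widetilde D_i$ after restriction to the first infinitesimal neighbourhood. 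Away from $|\widetilde D|$ the sheaves $\Omega^2_{\tX}(Z)^{{\rm regRes}_{\widetilde D}}$ and $\Omega^2_{\tX}(Z)$ agree, so both quotients by $\Omega^2_{\tX}$ are supported on $Z$ and one checks the isomorphism $\Omega^2_{\tX}(Z)^{{\rm regRes}_{\widetilde D}}/\Omega^2_{\tX}\simeq \calO_Z(K_{\tX}+Z-D)$ by comparing stalks, using that $D$ is the $\calO_Z$-reduction of $\widetilde D$ and that $\widetilde D$ meets $E$ transversally (so $D$ is Cartier on $Z$ of the expected colength). I expect this gluing/bookkeeping of which Laurent coefficients survive to be the technically fussy part, but it is genuinely local and linear.

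\textbf{Step 2: cohomology and the duality (b).} Taking the long exact sequence of
$0\to\Omega^2_{\tX}\to\Omega^2_{\tX}(Z)^{{\rm regRes}_{\widetilde D}}\to\calO_Z(K_{\tX}+Z-D)\to0$
and using $h^1(\Omega^2_{\tX})=0$ (Grauert--Riemenschneider) together with $h^0(\Omega^2_{\tX})$ being a common sub, one gets $\Omega_Z(D)=H^0(\Omega^2_{\tX}(Z)^{{\rm regRes}_{\widetilde D}})/H^0(\Omega^2_{\tX})\simeq H^0(Z,\calO_Z(K_{\tX}+Z-D))$. Then Serre duality on $Z$ (in the form already used in (\ref{eq:duality})) identifies $H^0(\calO_Z(K_{\tX}+Z-D))$ with $H^1(Z,\calO_Z(D-Z)\otim\text{(dualizing)})^*$; matching the twist against the natural line bundle $\calL=\widetilde c^{\,l'}(D)$ (which by definition of the Abel map is $\calO_Z(D)$ up to the chosen splitting) yields $\Omega_Z(D)\simeq H^1(Z,\calL)^*$. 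The only care needed here is to keep the normalizations of $\calO_Z(l')$ and of $\calL$ consistent with the conventions of \ref{bek:natline} and \ref{bek:modAbel}, so that the Chern class on both sides is $l'$.

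\textbf{Step 3: the tangent space description (c).} Here I would use the deformation-theoretic description of $\eca^{l'}(Z)$: a tangent vector at $D$ is an infinitesimal motion of the transversal divisors $\widetilde D_i$, equivalently an element of $H^0(D,N_{D/Z})$ for the relevant normal/conormal data, and $T_D\widetilde c$ is the connecting map into $H^1(\calO_Z)$. Dually, a cotangent vector at $\calL$ is a form $\omega\in H^0(\Omega^2_{\tX}(Z))/H^0(\Omega^2_{\tX})$, and pairing $T_D\widetilde c(\xi)$ with $\omega$ via Laufer integration localizes, by the residue theorem, to a sum of residues of $\omega$ along the $\widetilde D_i$ paired with $\xi$. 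Hence $\omega$ annihilates the whole image $(T_D\widetilde c)(T_D\eca^{l'}(Z))$ precisely when all these residues $\mathrm{Res}_{\widetilde D_i}(\omega)$ vanish to the required order, i.e. when $\omega\in H^0(\Omega^2_{\tX}(Z)^{{\rm regRes}_{\widetilde D}})$. So the annihilator of the image is $\Omega_Z(D)$, and by (b) and a dimension count — $\dim(T_D\widetilde c)(T_D\eca^{l'}(Z))=\dim\eca^{l'}(Z)-\dim\ker= (l',Z)-(\,(l',Z)-\dim\mathrm{im})$, compared with $h^1(\calO_Z)-h^1(Z,\calL)$ — one gets that the image is exactly $\Omega_Z(D)^\perp=\bigcap_{\omega}\ker(T_{\calL}\omega)$, which is the claim. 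The main obstacle in this step is making the ``residue theorem localizes the Laufer pairing'' argument precise for the non-reduced cycle $Z$ and controlling that the tangent map surjects onto the expected space; this is where I would lean hardest on the identifications already set up in \ref{ss:LD} and on Theorem \ref{prop:AZ}.
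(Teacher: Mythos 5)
This theorem is imported verbatim from \cite[Th.\ 10.1.1]{NNI} and carries no proof in the present paper, so there is nothing internal to compare against; your outline is essentially the argument of that reference: the local Laurent--coefficient identification $\Omega^2_{\tX}(Z)^{{\rm regRes}_{\widetilde D}}=\Omega^2_{\tX}+\Omega^2_{\tX}(Z-\widetilde D)$ near each $\widetilde D_i$ for (a), the resulting short exact sequence combined with Grauert--Riemenschneider vanishing and Serre duality on $Z$ for (b), and the residue/integral presentation of the Laufer pairing for (c). The one point to tighten is your Step 3 dimension count, which as written is circular: the correct input is the identification of $T_D\widetilde c$ with the connecting map of the Mittag--Leffler sequence, giving $\dim \im (T_D\widetilde c)=h^1(\calO_Z)-h^1(Z,\calL)$, after which the inclusion $\Omega_Z(D)\subseteq (\im (T_D\widetilde c))^{\perp}$ from the residue computation together with part (b) forces equality, so the converse direction of your residue claim is not needed.
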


If $I$ is the $E^*$--support of $l'$ (that is, $\widetilde{D}$ intersects $E$ exactly along
$\cup_{v\in I}E_v$), then $\Omega_Z(I)\subset \Omega_Z(D)\subset H^1(\calO_Z)^*$.
Dually, via Proposition \ref{prop:HDIZ} and
Theorem \ref{th:Formsres}{\it (c)} (and up to a linear translation of $\im (T_D\widetilde{c})$)
\begin{equation}\label{eq:dual}
(T_D\widetilde{c})(T_D\eca^{l'}(Z))=\Omega _Z(D)^\perp\subset \Omega_Z(I)^\perp=V_Z(I)\subset H^1(\calO_Z).
\end{equation}

Let us fix a point $p\in E$ and a local coordinate system $(u,v)$ around $p$ such that
$E=\{u=0\}$, cf. \ref{bek:OmegaD}. Fix also some  $\omega\in H^0(\tX,\Omega_{\tX}^2(Z))$ which has pole of order $o>0$ at the exceptional divisor in $E$ containing $p$.
We say that (the divisor of) $\omega$ has no support point at $p$ if it can be represented locally as
$(\varphi(u,v)/u^o)du\wedge dv$ with $\varphi$ holomorphic and $\varphi(0,0)\not=0$. The other points are the support points denoted by ${\rm supp}(\omega)$.

\begin{lemma}\label{lem:dualInt}
Fix $\omega\in H^0(\tX,\Omega_{\tX}^2(Z))$ such that there exists  a point $p\in E_v$,
a local
divisor $\widetilde{D}_1$
in $\tX$  with the following properties:
 (a) $\widetilde{D}_1$ is  part of certain
$\widetilde{D}=\widetilde{D}_1+ \widetilde{D}_2$, such that
$\widetilde{D}_1\cap E=\widetilde{D}_1\cap E_v =p\not\in \widetilde{D}_2\cup {\rm supp}(\omega)$, and (b) $\widetilde{D}$ is  a lift of $D\in\eca^{l'}(Z)$, and the
class of $\omega$  in
 $H^0(\tX,\Omega_{\tX}^2(Z))/H^0(\tX,\Omega_{\tX}^2)$
restricted on
 $\im T_{D}\widetilde{c}^{l'}(Z)$ is zero. Then $\omega$ has no pole along $E_v$.
\end{lemma}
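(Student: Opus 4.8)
The plan is to turn hypothesis (b) into the statement that $\omega$ is (represented by) a global section of the subsheaf $\Omega^2_{\tX}(Z)^{{\rm regRes}_{\widetilde{D}}}$ introduced in \ref{bek:OmegaD}, and then to read off the vanishing of the pole of $\omega$ along $E_v$ from the explicit local description of that sheaf near $p$, using hypothesis (a).

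First I would set up the duality dictionary. By (\ref{eq:duality}) one identifies $H^0(\tX,\Omega^2_{\tX}(Z))/H^0(\tX,\Omega^2_{\tX})$ with $H^1(\calO_Z)^*$, and by (\ref{eq:dual}) the subspace $\im T_D\widetilde{c}^{l'}(Z)\subset H^1(\calO_Z)$ equals $\Omega_Z(D)^\perp$, where $\Omega_Z(D)=H^0(\tX,\Omega^2_{\tX}(Z)^{{\rm regRes}_{\widetilde{D}}})/H^0(\tX,\Omega^2_{\tX})$. Since $H^1(\calO_Z)$ is finite dimensional and the Laufer pairing is perfect, taking annihilators yields $(\im T_D\widetilde{c}^{l'}(Z))^\perp=(\Omega_Z(D)^\perp)^\perp=\Omega_Z(D)$. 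Hypothesis (b) says exactly that the class of $\omega$ lies in $(\im T_D\widetilde{c}^{l'}(Z))^\perp$, hence in $\Omega_Z(D)$; and since $H^0(\tX,\Omega^2_{\tX})\subset H^0(\tX,\Omega^2_{\tX}(Z)^{{\rm regRes}_{\widetilde{D}}})$ (recorded in \ref{bek:OmegaD}) while $\omega\in H^0(\tX,\Omega^2_{\tX}(Z))$, this forces $\omega$ itself to be a section of $\Omega^2_{\tX}(Z)^{{\rm regRes}_{\widetilde{D}}}$. In particular ${\rm Res}_{\widetilde{D}_1}(\omega)$ has no pole along $\widetilde{D}_1$.

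The second step is a local computation at $p$, by contradiction. Suppose $\omega$ has pole order $o\geq 1$ along $E_v$. Choose local coordinates $(u,v)$ at $p$ with $E_v=\{u=0\}$ and $\widetilde{D}_1=\{v=0\}$ --- legitimate because $\widetilde{D}_1$ is a smooth arc meeting $E_v$ transversally, so a local equation of $E_v$ together with a local equation of $\widetilde{D}_1$ is a coordinate system. Because $p\notin\widetilde{D}_2$, near $p$ the divisor $\widetilde{D}$ coincides with $\{v=0\}$ and the residue recipe of \ref{bek:OmegaD} applies verbatim; because $p\notin{\rm supp}(\omega)$, the definition of support point provides a local expression $\omega=(\varphi(u,v)/u^o)\,du\wedge dv$ with $\varphi$ holomorphic and $\varphi(0,0)\neq 0$. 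Then ${\rm Res}_{\widetilde{D}_1}(\omega)=(\omega/dv)|_{v=0}=(\varphi(u,0)/u^o)\,du$, which has a pole of order exactly $o\geq 1$ along $\widetilde{D}_1$ (as $\varphi(0,0)\neq 0$), contradicting the conclusion of the first step. Hence $o=0$, i.e.\ $\omega$ has no pole along $E_v$.

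I expect the only genuinely delicate points to be bookkeeping in the duality step: one should check that the parenthetical ``up to a linear translation'' in (\ref{eq:dual}) is harmless --- it is, since both $\im T_D\widetilde{c}^{l'}(Z)$ and $\Omega_Z(D)$ are honest linear subspaces and annihilators only see the linear part --- and that the two coordinate normalizations used locally, $\{u=0\}=E_v$ from the support-point definition and $\{v=0\}=\widetilde{D}_1$ from the residue description, can be realized in one chart, which is exactly the transversality of $\widetilde{D}_1$ and $E_v$ at $p$ (the pole order $o$ along $E_v$ and the non-vanishing $\varphi(0,0)\neq 0$ being intrinsic). A further minor point is to confirm that $\widetilde{D}_1$ is one of the components $\widetilde{D}_i$ to which the ${\rm regRes}$ condition refers; this follows from $\widetilde{D}_1\cap E=\widetilde{D}_1\cap E_v=\{p\}$ together with the disjointness of the components of $\widetilde{D}$.
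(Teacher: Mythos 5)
Your argument is clean and would be correct \emph{provided} $\widetilde{D}$ were a disjoint union of reduced divisors meeting $E$ transversally --- but that is an extra hypothesis you have silently imposed, and it is precisely the generality the lemma is meant to avoid. The statement only requires $\widetilde{D}_1$ to be a \emph{local divisor} with $\widetilde{D}_1\cap E=\{p\}$: in local coordinates $(u,v)$ with $E_v=\{u=0\}$ its equation is $g=cv^k+\sum_{n>k}c_nv^n+uh(u,v)$ with $k\geq 1$ arbitrary (possibly tangential or non--reduced), so you cannot normalize $\widetilde{D}_1=\{v=0\}$. This breaks your proof at two points. First, the sheaf $\Omega^2_{\tX}(Z)^{{\rm regRes}_{\widetilde{D}}}$ of \ref{bek:OmegaD} and the identification $\im (T_D\widetilde{c})=\Omega_Z(D)^{\perp}$ of Theorem \ref{th:Formsres}(c) and (\ref{eq:dual}) are only defined and stated for $D$ a union of $(l',E)$ \emph{disjoint, reduced, transversal} cuts $\{\widetilde{D}_i\}_i$; for a general lift $\widetilde{D}$ (including a possibly bad $\widetilde{D}_2$) the biduality step $(\im T_D\widetilde{c})^{\perp}=\Omega_Z(D)$ is simply not available. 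Second, even granting some residue formalism, your local computation of ${\rm Res}_{\widetilde{D}_1}(\omega)$ presupposes $k=1$. The extra generality is genuinely used: in the proof of Lemma \ref{lem:imcA} the divisor $D$ is chosen only so that its lift avoids ${\rm supp}(\omega)$, with no transversality guaranteed.

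The paper's proof sidesteps all of this by working directly with the integral presentation of the Laufer pairing against tangent vectors of $\eca^{l'}(Z)$: assuming a pole of order $o>0$ along $E_v$, it writes $g=cv^k+h'$ and exhibits the explicit first--order deformation $g_t=c(v-tu^{o-1})^k+h'$, then checks via the integral (\ref{eq:int}) that the class of $\omega$ pairs nontrivially with the resulting tangent vector, contradicting hypothesis (b). Your argument is essentially the dual reformulation of that computation in the special case $k=1$ (where it is valid and arguably more conceptual); to make it a proof of the lemma as stated you would either have to extend the ${\rm regRes}$ formalism and Theorem \ref{th:Formsres}(c) to tangential and non--reduced cuts, or fall back on a deformation computation for general $k$ as the paper does.
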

\begin{proof} Assume that $\omega $ has a pole of order $o>0$ along $E_v$. Fix some local
coordinated $(u,v)$ at $p:=\widetilde{D}_1\cap E_v$  such that $\omega $ locally is
$du\wedge dv/u^o$ and $\widetilde{D}_1$ is $\{g(u,v)=0\}$.
A deformation $g_t(u,v)$ of $g$ produces
a tangent vector in $T_D\eca^{l'}(Z)$
and the action of $\omega$ on it is given by
(for details see \cite[7.2]{NNI})
\begin{equation}\label{eq:int}
\frac{d}{dt}\Big|_{t=0}\
\int_{|u|=\epsilon, \, |v|=\epsilon} \
\log \frac{g_t(u,v)}{g(u,v) } \cdot \frac{du\wedge dv}{u^o}.
\end{equation}
Hence if we realize a deformation $g_t$ for which the expression from (\ref{eq:int})
is non--zero, we get a contradiction.
Note that $g$ necessarily has the form $cv^k + \sum_{n>k}c_nv^n+uh(u,v)=cv^k+h'$ for some $k\geq 1$, $c_n\in\C$  and $c\in\C^*$. Then set $g_t=c(v-tu^{o-1})^k +h'$. Then the $t$--coefficient of the integrant is
$\frac{kdu\wedge dv}{uv}\cdot (1-\frac{h'}{cv^k}+( \frac{h'}{cv^k})^2-\cdots )$, hence
(\ref{eq:int})  is non--zero.
\end{proof}

\begin{definition}\label{bek:filtrforms}
 Additionally to the linear subspace arrangement $\{\Omega_Z(I)\}_I\subset
H^0(\Omega^2_{\tX}(Z))/H^0(\Omega^2_{\tX})\simeq H^1(\calO_Z)^*$
we consider a more subtle object,  a filtration indexed by $l\in L$, $0\leq l\leq Z$ as well, called the {\it  multivariable
divisorial filtration of forms}. Indeed, for any such $l$ we define
$\calG_l:= H^0(\Omega^2_{\tX}(l))/H^0(\Omega^2_{\tX})\subset
H^0(\Omega^2_{\tX}(Z))/H^0(\Omega^2_{\tX})$, equivalent to
$H^1(\calO_l)^*\hookrightarrow H^1(\calO_Z)^*$, dual to the natural epimorphisms
$H^1(\calO_Z)\twoheadrightarrow H^1(\calO_l)$.
In particular, $\calG_l\simeq H^1(\calO_l)^*$.
$\calG_l$ is generated by forms with pole $\leq l$.
In particular, $\calG_0=0$, $\calG_Z$ is the total vector space,
$\calG_{l_1}\subset \calG_{l_2}$ whenever $l_1\leq l_2$, and
$\calG_{l_1}\cap \calG_{l_2}=\calG_{\min\{l_1,l_2\}}$.

Note that if $l=\sum_{v\not\in I} r_vE_v$ and all $r_v\gg 0$ then $\calG_{\min(l, Z)}=\Omega_Z(I)$.
\end{definition}

\section{The first algorithm for the computation of $\dim {\rm Im} (c^{l'}(Z))$}\label{s:GenAlg}

\subsection{} We fix $Z\geq E$ and $l'\in -\calS' $ as above.
%In this section we present  an  algorithm for $\dim (\im (c^{l'}))$.

\begin{definition}\label{def:6.2} For any $l'\in -\calS'$ with $E^*$--support $I$
($\emptyset \subset I\subset \calv$)  we set the following notations:
$e_Z(l')=e_Z(I):=\dim V_Z(l')=\dim V_Z(I)$ and $d_Z(l'):=\dim \im (c^{l'}(Z))$.
\end{definition}
 From definitions and Propositions \ref{prop:AZ} and \ref{prop:HDIZ} (see also (\ref{eq:dual}))
 \begin{equation}\label{eq:6.2b}
\begin{split}
& d_Z(l')\leq e_Z(l')\\
& e_Z(I)=h^1(\calO_Z)-h^1(\calO_{Z|_{\calv\setminus I}})=h^1(\calO_Z)-\dim \Omega_Z(I).
 \end{split}\end{equation}

Usually $d_Z(l')\not=e_Z(l')$. Next statement provides a
 criterion for the validity of the equality.

\begin{lemma}\label{lem:imcA}
Let  $l' \in -S'$ with $E^*$--support $I$ and $Z \geq E$.
Assume that  $\calL$ is  a regular value  of $\widetilde{c}^{l'}$  in
 $\im( \widetilde{c}^{l'})$ such that
 for any $\omega\in H^0(\tX,\Omega_{\tX}^2(Z))$ there exists a section
 $s\in H^0(\calL)_{reg}$ such that ${\rm div}(s)\cap {\rm supp}(\omega)=\emptyset$.
(This is guaranteed e.g. if
the bundle $\calL$ has no base points.)  Then
 $T_{\calL}(\im \widetilde{c}^{l'})=A_Z(l')$, hence  $d_Z(l')= e_Z(l')$.
\end{lemma}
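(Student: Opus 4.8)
The plan is to translate everything to the form side via Laufer duality (Theorem~\ref{th:Formsres}) and then to reduce to the single local assertion of Lemma~\ref{lem:dualInt}. First I would fix a divisor $D$ in the fibre $(\widetilde{c}^{l'})^{-1}(\calL)\subset\eca^{l'}(Z)$ and recall from (\ref{eq:dual}) that $(T_D\widetilde{c}^{l'})(T_D\eca^{l'}(Z))=\Omega_Z(D)^{\perp}$, with $\Omega_Z(D)\subset H^1(\calO_Z)^*$ as in \ref{bek:OmegaD}. Since $\calL$ is a regular value, $T_D\widetilde{c}^{l'}$ has maximal rank at each such $D$, so its image is exactly the tangent space $T_{\calL}(\im(\widetilde{c}^{l'}))$; taking annihilators in the finite--dimensional pairing of $H^1(\calO_Z)$ with $H^1(\calO_Z)^*$ (where $(W^{\perp})^{\perp}=W$) then gives
\begin{equation*}
\Omega_Z(D)=T_{\calL}(\im(\widetilde{c}^{l'}))^{\perp}\qquad\text{for every }D\in(\widetilde{c}^{l'})^{-1}(\calL),
\end{equation*}
so $\Omega_Z(D)$ is independent of the chosen $D$ in this fibre; call the common subspace $\Omega$. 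Because $\Omega_Z(I)\subseteq\Omega_Z(D)$ always (the remark after Theorem~\ref{th:Formsres}), it then suffices to prove $\Omega\subseteq\Omega_Z(I)$, i.e.\ that every $\omega\in H^0(\tX,\Omega^2_{\tX}(Z))$ whose class lies in $\Omega$ has no pole along a generic point of $E_v$, for every $v\in I$.

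To show this I would fix such an $\omega$ and a vertex $v\in I$ and argue by contradiction, assuming $\omega$ has a pole of order $o>0$ along $E_v$. The hypothesis applied to $\omega$ produces $s\in H^0(\calL)_{reg}$ with ${\rm div}(s)\cap{\rm supp}(\omega)=\emptyset$; I would moreover take $D:={\rm div}(s)$ to be of the form considered in \ref{bek:OmegaD} --- a sum of disjoint divisors that are $\calO_Z$--reductions of reduced divisors of $\tX$ meeting $E$ transversally --- with some lifted branch $\widetilde{D}_1$ meeting $E$ at a point $p$ that is a smooth point of $E$ and lies on $E_v$. Since $s\in H^0(\calL)_{reg}$, $D\in\eca^{l'}(Z)$ with $\widetilde{c}^{l'}(D)=\calL$, so $D$ lies in the fibre over $\calL$ and $\Omega_Z(D)=\Omega\ni[\omega]$; equivalently the class of $\omega$ vanishes on $\im(T_D\widetilde{c}^{l'}(Z))$. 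Writing $\widetilde{D}=\widetilde{D}_1+\widetilde{D}_2$, I would check that $\widetilde{D}_1\cap E=\widetilde{D}_1\cap E_v=\{p\}$, that $p\notin\widetilde{D}_2$ (the branches of $\widetilde{D}$ are pairwise disjoint), and that $p\notin{\rm supp}(\omega)$ (since $p$ lies in ${\rm supp}({\rm div}(s))$, which is disjoint from ${\rm supp}(\omega)$). Then all hypotheses of Lemma~\ref{lem:dualInt} are satisfied, so $\omega$ has no pole along $E_v$ --- a contradiction. Hence $\Omega\subseteq\Omega_Z(I)$, so $\Omega=\Omega_Z(I)$.

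Finally I would combine this with Proposition~\ref{prop:HDIZ}, by which $\Omega_Z(I)^{\perp}=V_Z(I)=V_Z(l')$, to get
\begin{equation*}
T_{\calL}(\im(\widetilde{c}^{l'}))=\Omega^{\perp}=\Omega_Z(I)^{\perp}=V_Z(l')=A_Z(l'),
\end{equation*}
the last equality reading the tangent space as the affine subspace through $\calL$, equal to $\calL+V_Z(l')=A_Z(l')$ since $\calL\in\im(\widetilde{c}^{l'})\subseteq A_Z(l')$. As $\calL$ is a regular value, $\im(\widetilde{c}^{l'})$ is smooth at $\calL$ of dimension $d_Z(l')$, so $d_Z(l')=\dim T_{\calL}(\im(\widetilde{c}^{l'}))=\dim V_Z(l')=e_Z(l')$ by Definition~\ref{def:6.2}, which is the claim. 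I expect the main obstacle to be exactly the highlighted point of the second paragraph: producing, for each $\omega$ and each $v\in I$, a divisor in the fibre over $\calL$ that simultaneously lies in the locus of \ref{bek:OmegaD}, avoids ${\rm supp}(\omega)$, and has a branch through a smooth point of $E_v$. This is where the assumption on $\calL$ is used: it follows by combining the existence statement in the hypothesis with a density argument for the fibre over $\calL$ (cf.\ \cite{NNI}), and it is transparent when $\calL$ is base point free, since then a generic member of $|\calL|$ restricted to $E_v$ misses the finitely many nodes and the finite set ${\rm supp}(\omega)$ and is reduced at each of its points.
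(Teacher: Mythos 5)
Your proposal is correct and follows essentially the same route as the paper's own proof: pass to the dual statement $(T_{\calL}\im(\widetilde{c}^{l'}))^{\perp}=\Omega_Z(I)$, use the hypothesis to choose a divisor $D$ in the fibre over $\calL$ whose lift avoids ${\rm supp}(\omega)$, and invoke Lemma~\ref{lem:dualInt} to kill the pole along $E_v$. Your write-up is somewhat more explicit about verifying the hypotheses of Lemma~\ref{lem:dualInt} and about the $D$-independence of $\Omega_Z(D)$ over the fibre, but these are elaborations of the same argument, not a different one.
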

\begin{proof}
%Assume first that
% $\calL=\widetilde{c}^{l'}(D)$, where $D$ is the $\calO_Z$--reduction
% of a reduced divisor $\widetilde{D}$ of $\tX$, which intersects $E$ transversally.
Since $\calL$ is a regular value, $\calL$ is a smooth point of $\im (\widetilde{c}^{l'})$ and
$T_{\calL}
\im (\widetilde{c}^{l'}) =\im (T_D \widetilde{c}^{l'})$
for {\it any}  $D\in (\widetilde{c}^{l'})^{-1}(\calL)$ (cf. \cite[3.3.2]{NNI}).
We have to prove that $T_{\calL} \im (\widetilde{c}^{l'}) = A_Z(l')$;
 we prove the dual identity in the space of  forms, namely,
 $(T_{\calL} \im (\widetilde{c}^{l'})^\perp =\Omega_Z(I)$ (see  (\ref{eq:dual})).

Assume the contrary, that is, $( T_{\calL} \im (\widetilde{c}^{l'}))^\perp  \neq \Omega_Z(I)$. Since $\Omega_Z(I)\subset ( T_{\calL}
\im (\widetilde{c}^{l'}))^\perp $ (the
duality integral on $\Omega_Z(I)\times T_{\calL}\im (\widetilde{c}^{l'})$ is zero,
 cf. \cite[7.2]{NNI} or (\ref{eq:dual})) we get, that there is a form $\omega \in (T_{\calL}
\im (\widetilde{c}^{l'}))^\perp \setminus \Omega_Z(I)$.

Next choose  $D \in (\widetilde{c}^{l'})^{-1}(\calL)$ such that its lift $\widetilde{D}$ satisfies $\widetilde{D}\cap {\rm supp}(\omega)=\emptyset$. But  $\omega \in
(T_{\calL} \im (\widetilde{c}^{l'}))^\perp=
(\im (T_D \widetilde{c}^{l'}))^\perp $ and $\omega\not
\in \Omega_Z(I)$ contradict Lemma \ref{lem:dualInt}.
% so we get a contradiction and we are done.
\end{proof}

%$H^0( \Omega^2_{\tX}(Z))/H^0(\Omega^2_{\tX})$ via (\ref{eq:HDIZ}) and Theorem \ref{th:Formsres}.
%Since $\calL$ has no base points, and some section provides reduces transversal  divisor $D$
%(this is an open property),
%there exists a family of sections with reduced and transversal divisors $\{D_t\}_t$
%whose intersection points with $E$ have no fixed points. Therefore, if
%$\omega \in H^0(\Omega^2_{\tX}(Z))$ has the property
%that ${\rm Res}_{D_t}(\omega)=0$ for all such $D_t$, then necessarily $\omega\in \Omega_Z(I)$
%(by a local computation).
%
%Next, assume the general case. For any $v$  $0< m< r_v$ consider the set
%$E_{v,m}$ consisting of points  $p\in E_v$ such that there exists a section
%$s\in H^0(\calL)_{reg}$  with ${\rm mult}_p(s)=m$. These are constructible subsets of $E_v$.
%Since $\cup_{m} E_{v,m}$ is 1--dimensional, there exists at leats one $m$ such that
%$E_{v,m}$ is 1--dimensional. Then by the technical lemma \ref{lem:techn}
%there exists a section whose divisor satisfies the additional assumption of the previous
%paragraph, hence the statement follows.

In this section we provide an algorithm,  valid for any analytic structure, which determines
 $d_Z(l')$ in terms of a finite collection of invariants of type $e_Z(l')$,
 associated with a finite sequence of resolutions obtained via certain extra blowing ups
from $\tX$.
%This is a slightly cumbersome algorithm, it is more theoretical, it is harder to apply in
%concrete examples, but it is the main step in the proof of the second algorithm, which is much simpler
%(e.g. it can be formulated without extra blow ups).
%This second one is in terms of the filtration $\{\calG_l\}_l$ of 2--forms.

\subsection{Preparation for the algorithm}\label{bek:algsetup}
Fix some resolution $\tX$ of $(X,o)$ and
 $-l' = \sum_{v \in \calv} a_v E_v^*\in\calS'\setminus \{0\}$ (hence each $a_v\in\Z_{\geq 0}$).
 In the next construction  we will consider a finite sequence  of blowing ups starting from $\tX$.
In order to find a bound for the number of blowing ups recall that for any representative $\omega$ in
$H^0(\tX\setminus E,\Omega^2_{\tX})/H^0(\tX,\Omega^2_{\tX})$ the order of pole of $\omega$
along some $E_v$ is less than or equal to
 the $E_v$--multiplicity $m_v$ of $\max\{0,\lfloor Z_K\rfloor\} $ (see e.g.
\cite[7.1.3]{NNI} or \ref{ss:LD} here).
Then, for every $v\in \calv$ with $a_v>0$ we fix $a_v$ generic points on $E_v$, say
$p_{v,k_v}$, $1\leq k_v\leq a_v$. Starting from each $p_{v,k_v}$ we consider a sequence of blowing ups  of length  $m_v$:
first we blow up $p_{v,k_v}$ and we create the exceptional curve $F_{v,k_v,1}$,
then we blow up a generic point of
$F_{v,k_v,1}$ and we create $F_{v,k_v,2}$, and we do this all together $m_v$ times.
We proceed in this way with all points $p_{v,k_v}$, hence we get $\sum_va_v$ chains of modifications.
If $a_vm_v=0$ we do no modification along $E_v$.
A set of integers
${\bf s}=\{{\bf s}_{v,k_v}\}_{v\in\calv,\ 1\leq k_v\leq a_v}$ with $0\leq {\bf s}_{v,k}\leq m_v$ provides an
intermediate step of the tower: in the $(v,k_v)$ tower we do exactly ${\bf s}_{v,k_v}$ blowing ups;
${\bf s}_{v,k_v}=0$ means that we do not blow up $p_{v,k_v}$ at all.
(In the sequel, in order to avoid aggregation of indices, we simplify $k_v$ into $k$.)
Let us denote this modification by $\pi_{{\bf s}}:\tX_{{\bf s}}\to \tX$.
In $\tX_{{\bf s}}$ we find the exceptional curves $\cup_{v\in\calv}E_v\cup \cup_{v,k}
\cup_{1\leq t \leq {\bf s}_{v,k}}F_{v,k,t}$; we index the set of vertices as
$\calv_{{\bf s}}:=\calv \cup \cup_{v,k}
\cup_{1\leq t\leq {\bf s}_{v,k}}\{w_{v,k,t}\}$.
At each level ${\bf s}$ we set the next objects: $Z_{{\bf s}}:=\pi_{{\bf s}}^*(Z)$,
$I_{{\bf s}}:=\cup_{v,k}\{w_{v,k,{\bf s}_{v,k}}\}$, $-l'_{{\bf s}}:= \sum_{v,k}F^*_{v,k,{\bf s}_{v,k}}$
(in $L'_{{\bf s}}$, where
 $F_{v,k,0}=E_v$), $d_{{\bf s}}:=\dim \im c^{l'_{{\bf s}}}(Z_{{\bf s}})$
 %=d_{Z_{{\bf z}}}(I_{{\bf s}})$
and  $e_{{\bf s}}:=e_{Z_{{\bf z}}}(I_{{\bf s}})$
(both considered in  $\tX_{{\bf s}}$).

By similar argument as in (\ref{eq:6.2b}) one has
 again $d_{{\bf s}}\leq e_{{\bf s}}$ for any ${\bf s}$.

From definitions, for ${\bf s}={\bf 0}$ one has
$I_{{\bf 0}}= |l'|$, $e_{{\bf 0}}=e_Z(l')$ and $d_{{\bf 0}}=d_Z(l')$.

There is a natural partial ordering on the set of ${\bf s}$--tuples. Some of the above invariants are
constant with respect to ${\bf s}$, some of them are only monotonous. E.g., by Leray spectral sequence
one has $h^1(\calO_{Z_{{\bf s}}})=h^1(\calO_Z)$ for all ${\bf s}$. One the other hand,
\begin{equation}\label{eq:ineqes}\mbox{
if ${\bf s}_1\leq {\bf s}_2$ then $e_{{\bf s}_1}=h^1(\calO_{Z_{{\bf s}_1}})-\dim
\Omega_{Z_{{\bf s}_1}}(I_{{\bf s}_1})\geq  h^1(\calO_{Z_{{\bf s}_2}})-\dim
\Omega_{Z_{{\bf s}_2}}(I_{{\bf s}_2})=  e_{{\bf s}_2}$}\end{equation}
because
$\Omega_{Z_{{\bf s}_1}}(I_{{\bf s}_1})\subset \Omega_{Z_{{\bf s}_2}}(I_{{\bf s}_2})$.
In fact, for any
$\omega$, the pole--order along $F_{v,k,{\bf s}_{v, k}+1}$ of its pullback is one less than the pole--order of
$\omega$ along  $F_{v,k,{\bf s}_{v, k}}$. Hence, for ${\bf s}={\bf m}$
(that is, when ${\bf s}_{v,k}=m_v$ for all $v$ and $k$, hence all the possible pole--orders along
$I_{{\bf m}}$ automatically vanish) one has
$\Omega_{Z_{{\bf m}}}(I_{{\bf m}})=H^0(\tX_{{\bf m}}, \Omega^2_{{\tX_{{\bf m}}}}(Z_{{\bf m}}))/
H^0(\Omega^2_{{\tX_{{\bf m}}}})$. Hence $e_{{\bf m}}=0$. In particular,  necessarily
$d_{{\bf m}}=0$ too.

More generally, for any ${\bf s}$ and $(v,k)$ let ${\bf s}^{v,k}$ denote that  tuple which
is obtained from
${\bf s}$ by increasing ${\bf s}_{v,k}$ by one. By the above discussion
if   no form has pole along $F_{v,k,{\bf s}}$ then
$\Omega_{Z_{{\bf s}}}(I_{{\bf s}})=\Omega_{Z_{{\bf s}^{v,k}}}(I_{{\bf s}^{v,k}})$, hence
$e_{{\bf s}}=  e_{{\bf s}^{v,k}}$. Furthermore, by Laufer duality (or,
integral presentation of the Abel map as in \cite[\S 7]{NNI}), under such condition
$d_{{\bf s}}=  d_{{\bf s}^{v,k}}$ as well.

Therefore, we can redefine $e_{{\bf s}}$ and  $d_{{\bf s}}$  for tuples
${\bf s}=\{{\bf s}_{v,k}\}_{v,k}$ even for
arbitrary ${\bf s}_{v,k}\geq 0$: $e_{{\bf s}} = e_{\min\{{\bf s},{\bf m}\}}$ and
$d_{{\bf s}} = d_{\min\{{\bf s},{\bf m}\}}$ (and these values agree with the ones which might be
obtained by the
 first original
construction  applied for larger chains of blow ups).

The next theorem relates the invariants  $\{d_{{\bf s}}\}_{{\bf s}}$ and
$\{e_{{\bf s}}\}_{{\bf s}}$.

\begin{theorem}\label{th:ALGORITHM} {\bf (First algorithm)}
 With the above notations  the following facts hold.

(1) $ d_{{\bf s}} -  d_{{\bf s}^{v, k}} \in \{0, 1\}$.

(2) If for some fixed ${\bf s}$ the numbers $\{d_{{\bf s}^{v, k}}\}_{v,k}$ are not the same,
then $d_{{\bf s}} = \max_{v, k}\{\,d_{{\bf s}^{v, k}}\}$.
 In the case when all the numbers $\{d_{{\bf s}^{v, k}}\}_{v,k}$ are the same,
 then if this common value $d_{{\bf s}^{v, k}}$ equals $e_{{\bf s}}$, then $d_{{\bf s}} =
e_{{\bf s}}  =d_{{\bf s}^{v, k}}$;  otherwise $d_{{\bf s}} = d_{{\bf s}^{v, k}}+1$.
\end{theorem}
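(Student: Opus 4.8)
The plan is to reduce everything to the single geometric input that $\eca^{l'_{{\bf s}^{v,k}}}(Z_{{\bf s}^{v,k}})$ is birational to a codimension-one subvariety of $\eca^{l'_{{\bf s}}}(Z_{{\bf s}})$, together with the duality description of the tangent space of the Abel map from Theorem \ref{th:Formsres}(c). First I would make item {\it (1)} precise: blowing up a generic point $p_{v,k,{\bf s}_{v,k}+1}\in F_{v,k,{\bf s}_{v,k}}$ and passing from $l'_{{\bf s}}$ to $l'_{{\bf s}^{v,k}}$ corresponds, on the level of divisors, to requiring the corresponding $\widetilde{D}_i$ to pass through that generic point. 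This cuts out a subvariety of codimension $\le 1$ inside $\eca^{l'_{{\bf s}}}(Z_{{\bf s}})$, and since the pullback map $\pi^*_{{\bf s}^{v,k}}$ on divisors intertwines the two Abel maps up to the obvious linear identification of $H^1(\calO_{Z_{{\bf s}}})$ with $H^1(\calO_{Z_{{\bf s}^{v,k}}})$, one gets $d_{{\bf s}} - d_{{\bf s}^{v,k}}\in\{0,1\}$; this is exactly \eqref{eq:jumpint} in the excerpt.

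Next I would handle the easy half of {\it (2)}. The key structural fact is that $\im(\widetilde{c}^{l'_{{\bf s}}})$ is, generically, swept out by the images of the ``sub-Abel maps'' coming from each of the $\sum_v a_v$ blow-up chains; more precisely, using the additive structure $s^{l'_1,l'_2}$ (dominant and quasi-finite, cf.\ \ref{bek:modAbel}) one writes $\widetilde{c}^{l'_{{\bf s}}}$ as a sum of contributions, and the tangent space $T_{\calL}\im(\widetilde{c}^{l'_{{\bf s}}})$ at a generic $\calL$ is the span of the tangent spaces of the pieces obtained by advancing one chain by one blow-up. Hence $T_\calL\im(\widetilde c^{l'_{{\bf s}}})$ contains $T_{\calL'}\im(\widetilde c^{l'_{{\bf s}^{v,k}}})$ for each $(v,k)$, so $d_{{\bf s}}\ge\max_{v,k}d_{{\bf s}^{v,k}}$; combined with {\it (1)}, if the $d_{{\bf s}^{v,k}}$ are not all equal then the maximum already forces $d_{{\bf s}}=\max_{v,k}d_{{\bf s}^{v,k}}$. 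This part is essentially formal once the ``span'' statement is in place.

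The main obstacle is the ambiguous case in {\it (2)}: all $d_{{\bf s}^{v,k}}$ equal some common $d$, and one must decide whether $d_{{\bf s}}=d$ or $d+1$, the answer being governed by whether $d=e_{{\bf s}}$. Here I would argue via differential forms. By Theorem \ref{th:Formsres}(c) and \eqref{eq:dual}, at a divisor $D$ the image of the tangent map is $\Omega_Z(D)^\perp$, and $\Omega_Z(I)^\perp=V_Z(I)$ has dimension $e_{{\bf s}}$. If $d_{{\bf s}^{v,k}}=d<e_{{\bf s}}$ for all $(v,k)$, then for each chain there is a form $\omega_{v,k}\in H^0(\Omega^2(Z))$ annihilating $\im(T_D\widetilde c^{l'_{{\bf s}^{v,k}}})$ but not lying in $\Omega_Z(I_{{\bf s}})$; one shows, using Lemma \ref{lem:dualInt} applied to the generic extra blown-up point on $F_{v,k,{\bf s}_{v,k}}$ (which by genericity avoids ${\rm supp}(\omega)$), that such an $\omega$ forced to annihilate the larger $\im(T_D\widetilde c^{l'_{{\bf s}}})$ would have to lose its pole along $F_{v,k,{\bf s}_{v,k}}$ — i.e.\ already kill that pole one step earlier — and a counting/compatibility argument across all chains then produces a form in $\Omega_Z(I_{{\bf s}})^\perp\setminus\bigcup_{v,k}(\text{one-step images})$, giving a strict jump $d_{{\bf s}}=d+1$. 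Conversely, if $d=e_{{\bf s}}$ then $d_{{\bf s}}\le e_{{\bf s}}=d$ by \eqref{eq:6.2b}, so $d_{{\bf s}}=d$. The delicate point — and the step I expect to fight with — is making the ``produce a form in $\Omega_Z(I)^\perp$ not hit by any single chain'' argument rigorous: it requires controlling simultaneously the pole orders of a single form along all the $F_{v,k,{\bf s}_{v,k}}$ and invoking genericity of the configuration of the chosen points, which is where Lemma \ref{lem:dualInt} and the bound $m_v$ on pole orders do the real work.
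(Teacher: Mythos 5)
Your overall strategy is the one the paper actually follows (it proves the $\calL_0$-projected version, Theorem \ref{th:ALGORITHML}, in Section \ref{s:twisted}, and Theorem \ref{th:ALGORITHM} is the case $\calL_0=\calO_Z$): part {\it (1)} via identifying $\eca^{l'_{{\bf s}^{v,k}}}(Z_{{\bf s}^{v,k}})$ with a generic fibre of the ``supporting point'' map $q$ to $F_{v,k,{\bf s}_{v,k}}$; the unequal case of {\it (2)} by the squeeze $d_{{\bf s}}\leq\min_{v,k}d_{{\bf s}^{v,k}}+1\leq\max_{v,k}d_{{\bf s}^{v,k}}\leq d_{{\bf s}}$ (your ``span of tangent spaces'' claim is unnecessary for this and is itself unproved); and the equal case via Laufer duality and Lemma \ref{lem:dualInt}. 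One repairable omission in {\it (1)}: ``require $\widetilde D_i$ to pass through the blown-up point'' only makes sense verbatim when the divisors meet $F_{v,k,{\bf s}_{v,k}}$ with multiplicity one, i.e. when ${\bf s}_{v,k}\geq 1$ or $a_v=1$; for ${\bf s}_{v,k}=0$, $a_v>1$ the paper first splits off one transversal cut via the dominant quasi-finite addition map $s:\eca^{E_v^*}(Z_{{\bf s}})\times\eca^{l'_-}(Z_{{\bf s}})\to\eca^{l'_{{\bf s}}}(Z_{{\bf s}})$.

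The genuine gap is in the hard case of {\it (2)}. You argue in the direction ``assume $d<e_{{\bf s}}$ and produce a form in $\Omega_Z(I_{{\bf s}})^\perp\setminus\bigcup_{v,k}(\cdots)$ by a counting/compatibility argument across the chains''; as written this does not typecheck ($\Omega_Z(I_{{\bf s}})^\perp=V_Z(I_{{\bf s}})$ lives in $H^1(\calO_Z)$, not among forms), and no such construction exists in the paper or is needed. The workable argument is the contrapositive, and it hinges on a sharpening of {\it (1)} that you never isolate: $d_{{\bf s}}=d_{{\bf s}^{v,k}}$ if and only if the divisors in a generic fibre of the Abel map have \emph{no base point} on $F_{v,k,{\bf s}_{v,k}}$. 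Assuming no jump at any $(v,k)$, this base-point-freeness is precisely what lets you, for any $\omega$ annihilating $T_{\bar\calL}\im(\widetilde c^{\,l'_{{\bf s}}})$, choose $D$ in the fibre over $\bar\calL$ with $\widetilde D\cap{\rm supp}(\omega)=\emptyset$; Lemma \ref{lem:dualInt} then kills every pole of $\omega$ along the $F_{v,k,{\bf s}_{v,k}}$, so $(T_{\bar\calL}\im)^\perp\subseteq\Omega_{Z_{{\bf s}}}(I_{{\bf s}})$, hence $d_{{\bf s}}\geq e_{{\bf s}}$ and, with $d_{{\bf s}}\leq e_{{\bf s}}$, equality (cf. Lemma \ref{lem:imcA}). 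Your parenthetical ``which by genericity avoids ${\rm supp}(\omega)$'' conflates the genericity of the blow-up centres with the mobility of divisors inside a \emph{fixed} fibre of the Abel map: if that fibre had a base point lying in ${\rm supp}(\omega)$ you could not avoid it, so the avoidance must be \emph{derived} from the no-jump hypothesis rather than assumed. Once this equivalence is in place, the ``counting/compatibility argument'' you anticipate fighting with disappears entirely.
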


The proof of Theorem \ref{th:ALGORITHM} together with the proof of Theorem \ref{th:ALGORITHM4}
(the `Second algorithm')
 from the next section will be given in a more general context in section \ref{s:twisted}.

\bekezdes\label{bek:abde}
Theorem \ref{th:ALGORITHM} is suitable to run a decreasing induction  over the entries of ${\bf s}$
in order to determine
$\{d_{{\bf s}}\}_{{\bf s}}$ from $\{e_{{\bf s}}\}_{{\bf s}}$.
In fact we can obtain  even a closed--form expression.

\begin{corollary}\label{th:ALGORITHM3}
With the notations of Theorem \ref{th:ALGORITHM} one has
$d_{{\bf s}}=\min_{{\bf s}\leq \widetilde{{\bf s}}\leq {\bf m}} \{ |\widetilde{{\bf s}}-
{\bf s}|+e_{\widetilde{\bf s}}\}$  for any ${\bf 0}\leq {\bf s}\leq {\bf m}$.
(Here $|{\bf s}|=\sum_{v,k} s_{v,k_v}$.)
In particular,
$$d_Z(l')=d_{{\bf 0}}=\min_{{\bf 0}\leq {\bf s}\leq {\bf m}} \{ |{\bf s}|+e_{{\bf s}}\}.$$
(By the end of \ref{bek:algsetup} one also has
$\min_{{\bf s}\leq \widetilde{{\bf s}}\leq {\bf m}} \{ |\widetilde{{\bf s}}-
{\bf s}|+e_{\widetilde{\bf s}}\}=
\min_{{\bf s}\leq \widetilde{{\bf s}}} \{ |\widetilde{{\bf s}}-
{\bf s}|+e_{\widetilde{\bf s}}\}$  and
$\min_{{\bf 0}\leq {\bf s}\leq {\bf m}} \{ |{\bf s}|+e_{{\bf s}}\}
=\min_{{\bf 0}\leq {\bf s}} \{ |{\bf s}|+e_{{\bf s}}\}$.)
\end{corollary}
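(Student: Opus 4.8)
The plan is a decreasing induction on $|{\bf m}-{\bf s}|$, over ${\bf 0}\leq {\bf s}\leq {\bf m}$, based on Theorem~\ref{th:ALGORITHM}. Abbreviate $f_{{\bf s}}:=\min_{{\bf s}\leq \widetilde{{\bf s}}\leq {\bf m}}\{|\widetilde{{\bf s}}-{\bf s}|+e_{\widetilde{{\bf s}}}\}$; the target is $d_{{\bf s}}=f_{{\bf s}}$. The first step is to record the recursion satisfied by $f$: splitting the minimizing range into $\{\widetilde{{\bf s}}={\bf s}\}$ and $\{\widetilde{{\bf s}}>{\bf s}\}$, and noting that every $\widetilde{{\bf s}}$ with ${\bf s}<\widetilde{{\bf s}}\leq {\bf m}$ dominates ${\bf s}^{v,k}$ for some $(v,k)$ with ${\bf s}_{v,k}<m_v$, with $|\widetilde{{\bf s}}-{\bf s}|=1+|\widetilde{{\bf s}}-{\bf s}^{v,k}|$, one obtains
$$f_{{\bf s}}=\min\Big\{\, e_{{\bf s}},\ 1+\min_{(v,k)\,:\, {\bf s}_{v,k}<m_v} f_{{\bf s}^{v,k}}\,\Big\},$$
with the convention that a minimum over the empty index set equals $+\infty$. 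Hence $f_{{\bf m}}=e_{{\bf m}}$, and since $d_{{\bf m}}=e_{{\bf m}}=0$ by the discussion closing \ref{bek:algsetup}, the base case ${\bf s}={\bf m}$ is settled.

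For the inductive step one fixes ${\bf 0}\leq {\bf s}<{\bf m}$ and assumes $d_{{\bf s}^{v,k}}=f_{{\bf s}^{v,k}}$ for all $(v,k)$ with ${\bf s}_{v,k}<m_v$ (these tuples lie in $[{\bf 0},{\bf m}]$ and are strictly closer to ${\bf m}$). Put $D:=\min_{(v,k)\,:\, {\bf s}_{v,k}<m_v} d_{{\bf s}^{v,k}}$, so that by the recursion and the inductive hypothesis $f_{{\bf s}}=\min\{e_{{\bf s}},\,1+D\}$. Since ${\bf s}<{\bf m}$ there is an admissible $(v,k)$, and Theorem~\ref{th:ALGORITHM}(1) applied to one attaining $D$ gives $D\in\{d_{{\bf s}}-1,\,d_{{\bf s}}\}$, i.e. $d_{{\bf s}}\in\{D,D+1\}$. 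If $d_{{\bf s}}=D+1$ one concludes immediately: $f_{{\bf s}}=\min\{e_{{\bf s}},d_{{\bf s}}\}=d_{{\bf s}}$, using only $d_{{\bf s}}\leq e_{{\bf s}}$ from (\ref{eq:6.2b}). If $d_{{\bf s}}=D$, then every admissible $d_{{\bf s}^{v,k}}$ lies between $D=d_{{\bf s}}$ and $d_{{\bf s}}$ by Theorem~\ref{th:ALGORITHM}(1), hence equals $d_{{\bf s}}$; the remaining directions, those with ${\bf s}_{v,k}=m_v$, satisfy $d_{{\bf s}^{v,k}}=d_{{\bf s}}$ by the stabilization convention of \ref{bek:algsetup}. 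So all the numbers $\{d_{{\bf s}^{v,k}}\}_{v,k}$ coincide with $d_{{\bf s}}$, whence the ``all equal'' clause of Theorem~\ref{th:ALGORITHM}(2) forces $d_{{\bf s}}=e_{{\bf s}}$ (otherwise it would assert $d_{{\bf s}}=d_{{\bf s}^{v,k}}+1$, a contradiction), and then $f_{{\bf s}}=\min\{e_{{\bf s}},1+D\}=\min\{d_{{\bf s}},d_{{\bf s}}+1\}=d_{{\bf s}}$. This closes the induction, and the displayed formulas, in particular the case ${\bf s}={\bf 0}$, follow.

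Finally I would check the two parenthetical identities, which merely say that the upper bound ${\bf m}$ is irrelevant in the minimum: for ${\bf s}\leq {\bf m}$ and any $\widetilde{{\bf s}}\geq {\bf s}$ one has $e_{\widetilde{{\bf s}}}=e_{\min\{\widetilde{{\bf s}},{\bf m}\}}$ and $|\widetilde{{\bf s}}-{\bf s}|\geq|\min\{\widetilde{{\bf s}},{\bf m}\}-{\bf s}|$, so replacing $\widetilde{{\bf s}}$ by $\min\{\widetilde{{\bf s}},{\bf m}\}$ never increases $|\widetilde{{\bf s}}-{\bf s}|+e_{\widetilde{{\bf s}}}$, while the larger range trivially contains the smaller. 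The main obstacle is the case $d_{{\bf s}}=D$ in the inductive step: it is exactly there that one must read off the equality $d_{{\bf s}}=e_{{\bf s}}$ from the ``testing'' clause of Theorem~\ref{th:ALGORITHM}(2), and the bookkeeping has to be done with some care because the recursion for $f$ sees only the directions with ${\bf s}_{v,k}<m_v$ whereas Theorem~\ref{th:ALGORITHM}(2) quantifies over all $(v,k)$, so the ``stuck'' directions must be tracked separately.
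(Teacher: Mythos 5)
Your proof is correct. It differs from the paper's in organization rather than in substance: the paper proves the inequality $d_{{\bf s}}\leq |\widetilde{{\bf s}}-{\bf s}|+e_{\widetilde{{\bf s}}}$ for every $\widetilde{{\bf s}}\geq {\bf s}$ directly (chaining Theorem \ref{th:ALGORITHM}(1) with $d\leq e$, no induction needed), and then establishes that the minimum is attained by constructing an explicit increasing path ${\bf s}={\bf s}_0<{\bf s}_1<\cdots<{\bf s}_t$ along which $d$ drops by exactly $1$ at each step, terminating at a tuple where all neighbours agree and hence, by the testing clause of Theorem \ref{th:ALGORITHM}(2), $d_{{\bf s}_t}=e_{{\bf s}_t}$. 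You instead set up the dynamic-programming recursion $f_{{\bf s}}=\min\{e_{{\bf s}},\,1+\min_{(v,k)}f_{{\bf s}^{v,k}}\}$ and match it against the recursion for $d$ by a decreasing induction on the poset, handling both inequalities at once. The two arguments hinge on exactly the same mechanism — the ``all neighbours equal'' case of Theorem \ref{th:ALGORITHM}(2) forcing $d_{{\bf s}}=e_{{\bf s}}$ — so neither is more general; the paper's version is shorter because the upper bound requires no induction, while yours is more systematic and has the merit of making explicit the bookkeeping for the saturated directions ${\bf s}_{v,k}=m_v$ (where $d_{{\bf s}^{v,k}}=d_{{\bf s}}$ by the stabilization convention), a point the paper's proof passes over silently even though Theorem \ref{th:ALGORITHM}(2) quantifies over all $(v,k)$.
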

\begin{proof}
By Theorem \ref{th:ALGORITHM}{\it (1)} for any $\widetilde{{\bf s}}\geq {\bf s}$ one has
$d_{{\bf s}}-d_{\widetilde{\bf s}}\leq |\widetilde{{\bf s}}-{\bf s}|$, and by (\ref{eq:6.2b})
$d_{\widetilde{\bf s}}\leq e_{\widetilde{\bf s}}$. These two imply  $d_{{\bf s}}\leq
|\widetilde{{\bf s}}-{\bf s}|+e_{\widetilde{\bf s}}$, hence
$d_{{\bf s}}\leq \min_{{\bf s}\leq \widetilde{{\bf s}}\leq {\bf m}} \{ |\widetilde{{\bf s}}-
{\bf s}|+e_{\widetilde{\bf s}}\}$.
Next we show that   $d_{{\bf s}}$ in fact equals
$|\widetilde{{\bf s}}-{\bf s}|+e_{\widetilde{\bf s}}$ for some $\widetilde{{\bf s}}$. The wished
$\widetilde{{\bf s}}$ is the last term of the sequence $\{{\bf s}_i\}_{i=0}^t$ constructed as follows.
Set ${\bf s}_0:={\bf s}$. Then, assume that ${\bf s}_i$ is already constructed, and that there exists
$(v,k)$ such that $d_{{\bf s}_i}=d_{({\bf s}_i)^{v,k}}+1$. Then set ${\bf s}_{i+1}:=({\bf s}_i)^{v,k}$
(for one of the choices of such  possible $(v,k)$).
This inductive construction will stop after finitely many steps (since each $d_{{\bf s}}\geq 0$).
But if $d_{{\bf s}_t}=d_{({\bf s}_t)^{v,k}}$ for all $(v,k)$, then by \ref{th:ALGORITHM}{\it (2)}
$d_{{\bf s}_t}=e_{{\bf s}_t}$. Hence $e_{{\bf s}_t}=d_{{\bf s}_t}=d_{{\bf s}}-|{\bf s}_t-{\bf s}|$.
\end{proof}

\section{The second algorithm for the computation of $\dim {\rm Im} (c^{l'}(Z))$}\label{bek:algsetup2}

\subsection{Preparation}\label{ss:4.1}
The algorithm from the previous section determines the dimensions  of the Abel maps $d_Z(l')$
  in terms of a finite collection of invariants
of type $e_Z(l')$ associated with a finite sequence of
resolutions obtained via certain extra blowing ups  from $\tX$.
Though, in principle,  $e_Z(l')$ is much simpler than $d_Z(l')$ (it is the
`stabilizer' of $d_Z(l')$), the algorithm is still
slightly cumbersome, it is more theoretical, it is not easy  to apply in
concrete examples: one needs to know all the integers $\{e_{{\bf s}}\}_{{\bf s}}$, that is,
 cf. Proposition \ref{prop:AZ},
all the integers $\{h^1(\calO_{Z_{{\bf s}}|_{\calv_{{\bf s}}\setminus I_{{\bf s}}}}\}_{{\bf s}}$ associated with the tower of blowing ups.   (However,  it is a  necessary intermediate
step in the proof of the new algorithm).

The new algorithm  is considerably  simpler, e.g. it can be formulated in terms of the
resolution $\tX$ (see also the comments below).
It provides $d_Z(l')$   in terms of the filtration $\{\calG_l\}_l$ of 2--forms.

As a starting point, consider the construction from \ref{bek:algsetup}.
For any ${\bf s}$ define the cycle $l_{{\bf s}}\in L$ of $\tX$ by
$$l_{{\bf s}}:=\min \Big\{
\sum_{v\in \calv} \, \min_{1\leq k_v\leq a_v}\{{\bf s}_{v, k_v}\}E_v, Z\Big\}\in L.$$
Set $\calG_{{\bf s}}:=\calG_{l_{{\bf s}}}$ and $g_{{\bf s}}:=\dim \calG_{{\bf s}}$ as well.
Note that (via pullback) there is an inclusion
$\calG_{{\bf s}}\subset \Omega_{Z_{{\bf s}}}(I_{{\bf s}})$.
Indeed, if the pole order of certain $\omega$ along $E_v$ is $\leq {\bf s}_{v,k_v}$ then its pullback
along $F_{v, k_v, {\bf s}_{v, k_v}}$ has no pole.
Hence $g_{{\bf s}}\leq \dim \Omega_{Z_{{\bf s}}}(I_{{\bf s}})= h^1(\calO_Z)-e_{{\bf s}}$ too (cf.
(\ref{eq:6.2b})).
 In particular,
\begin{equation}\label{eq:ws}
d_{{\bf s}}\leq e_{{\bf s}}\leq h^1(\calO_Z)-g_{{\bf s}}.
\end{equation}
However, in principle it can happen that for a certain $\omega$ with even higher pole than $l_{{\bf s}}$
its pullback is in
$\Omega_{Z_{{\bf s}}}(I_{{\bf s}})$. E.g., if $\omega$ in some local coordinates $(u,v)$
of an open set $U$ is $vdu\wedge dv/u^o$ (and $U\cap E=\{u=0\}$) then its pullback via blowing up
(once)  at  $u=v=0$ has pole order $o-2$. This phenomenon can happen even if we blow up a generic point:
 imagine a family of forms $\omega_t$
with `moving divisor', parametrized by $t$ given by $(v-t) du\wedge dv/u^o$. Then, even if we blow up $E$ at a generic point $u=v-t_0=0$, in the family $\{\omega_t\}_t$ there is a form
$\omega_{t_0}$ whose pole along $E_v$ is $o$ while its pullback has pole $o-2$. Hence
the equality of subspaces  $\calG_{{\bf s}}\subset \Omega_{Z_{{\bf s}}}(I_{{\bf s}})$,
or of the equality $e_{{\bf s}}= h^1(\calO_Z)-g_{{\bf s}}$ in principle is subtle and it is
hard to test.

Note also that the invariant $h^1(\calO_Z)-g_{{\bf s}}$
conceptually (and technically) is much simpler than $e_{{\bf s}}$. E.g., it depends only on
$v\mapsto \min _{k_v\leq a_v}\{ {\bf s}_{v,k_v}\}$, and it can be described  via a cycle of $\tX$
(namely $l_{{\bf s}}$)
instead of the geometry of the tower $\tX_{{\bf s}}$. Nevertheless, via the next theorem,  it
still contains sufficient information to determine $d_{{\bf s}}$, in particular $d_Z(l')$. In order
to emphasize the parallelism between the two algorithms we formulate them in a completely symmetric way
(in particular, the first parts are completely identical).

\begin{theorem}\label{th:ALGORITHM4} {\bf (Second algorithm)}
 With the above notations  the following facts hold.

(1) $ d_{{\bf s}} -  d_{{\bf s}^{v, k}} \in \{0, 1\}$.

(2) If for some fixed ${\bf s}$ the numbers $\{d_{{\bf s}^{v, k}}\}_{v,k}$ are not the same,
then $d_{{\bf s}} = \max_{v, k}\{\,d_{{\bf s}^{v, k}}\}$.
 In the case when all the numbers $\{d_{{\bf s}^{v, k}}\}_{v,k}$ are the same,
 then if this common value $d_{{\bf s}^{v, k}}$ equals $h^1(\calO_Z)-g_{{\bf s}}$, then $d_{{\bf s}} =
h^1(\calO_Z)-g_{{\bf s}}  =d_{{\bf s}^{v, k}}$;  otherwise $d_{{\bf s}} = d_{{\bf s}^{v, k}}+1$.
\end{theorem}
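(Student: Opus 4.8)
The plan is to derive Theorem~\ref{th:ALGORITHM4} from the first algorithm, Theorem~\ref{th:ALGORITHM}. Part \emph{(1)} is literally the same statement in both theorems (it is (\ref{eq:jumpint})), and so is the first alternative of part \emph{(2)} ($d_{{\bf s}}=\max_{v,k}\{d_{{\bf s}^{v,k}}\}$ when the $\{d_{{\bf s}^{v,k}}\}_{v,k}$ are not all equal); only the last clause needs work. So assume the numbers $\{d_{{\bf s}^{v,k}}\}_{v,k}$ are all equal, with common value $d$. By part \emph{(1)} and (\ref{eq:ws}) we have the sandwich $d\leq d_{{\bf s}}\leq h^1(\calO_Z)-g_{{\bf s}}$, so if $d=h^1(\calO_Z)-g_{{\bf s}}$ then $d_{{\bf s}}=d$, as required. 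It remains to treat the case $d<h^1(\calO_Z)-g_{{\bf s}}$ and to prove $d_{{\bf s}}=d+1$. By Theorem~\ref{th:ALGORITHM}\emph{(2)} applied at ${\bf s}$ one has $d_{{\bf s}}=d+1$ \emph{unless} $d=e_{{\bf s}}$ (note $d\leq d_{{\bf s}}\leq e_{{\bf s}}$ by part \emph{(1)} and (\ref{eq:ws}), so ``$d\ne e_{{\bf s}}$'' is the same as ``$d<e_{{\bf s}}$''). Hence everything reduces to the following comparison, which is the heart of the matter.

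\smallskip
\noindent\textbf{Key Claim.} \emph{If $\{d_{{\bf s}^{v,k}}\}_{v,k}$ are all equal to $e_{{\bf s}}$, then $e_{{\bf s}}=h^1(\calO_Z)-g_{{\bf s}}$; equivalently (using that always $\calG_{l_{{\bf s}}}\subseteq\Omega_{Z_{{\bf s}}}(I_{{\bf s}})$, with $e_{{\bf s}}=h^1(\calO_Z)-\dim\Omega_{Z_{{\bf s}}}(I_{{\bf s}})$ and $g_{{\bf s}}=\dim\calG_{l_{{\bf s}}}$) one has $\calG_{l_{{\bf s}}}=\Omega_{Z_{{\bf s}}}(I_{{\bf s}})$.}

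\smallskip
Granting the Claim, the remaining case is settled: if we had $d=e_{{\bf s}}$, the Claim would force $d=e_{{\bf s}}=h^1(\calO_Z)-g_{{\bf s}}$, contradicting $d<h^1(\calO_Z)-g_{{\bf s}}$; hence $d<e_{{\bf s}}$ and $d_{{\bf s}}=d+1$. Moreover, exactly as Corollary~\ref{th:ALGORITHM3} is deduced from Theorem~\ref{th:ALGORITHM}, the inductive scheme of Theorem~\ref{th:ALGORITHM4} then yields $d_Z(l')=\min_{{\bf 0}\leq{\bf s}}\{\,|{\bf s}|+h^1(\calO_Z)-g_{{\bf s}}\,\}$; specialising ${\bf s}$ to the tuples adapted to a subcycle $0\leq Z_1\leq Z$ (all entries over $v$ equal to the $E_v$--multiplicity of $Z_1$), for which $|{\bf s}|=(l',Z_1)$ and $g_{{\bf s}}=h^1(\calO_{Z_1})$, and using $(l',l_{{\bf s}})\leq|{\bf s}|$ with $g_{{\bf s}}=h^1(\calO_{l_{{\bf s}}})$ for the reverse inequality, one recovers the cycle formula $d_Z(l')=\min_{0\leq Z_1\leq Z}\{\,(l',Z_1)+h^1(\calO_Z)-h^1(\calO_{Z_1})\,\}$.

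To prove the Key Claim I would first unpack the hypothesis. Since $d_{{\bf s}^{v,k}}\leq e_{{\bf s}^{v,k}}\leq e_{{\bf s}}$ (the last step by (\ref{eq:ineqes})), the assumption $d_{{\bf s}^{v,k}}=e_{{\bf s}}$ forces $e_{{\bf s}^{v,k}}=e_{{\bf s}}$ for every $(v,k)$, i.e.\ $\Omega_{Z_{{\bf s}^{v,k}}}(I_{{\bf s}^{v,k}})=\Omega_{Z_{{\bf s}}}(I_{{\bf s}})$ --- no nonzero form has a pole of order exactly $1$ along a chain--top $F_{v,k,{\bf s}_{v,k}}$ while being pole--free along the remaining chain--tops; it also forces, via Theorem~\ref{th:ALGORITHM}\emph{(2)}, that $d_{{\bf s}}=e_{{\bf s}}$, which by (\ref{eq:dual}) and Theorem~\ref{th:Formsres}\emph{(c)} means that for a generic $D\in\eca^{l'_{{\bf s}}}(Z_{{\bf s}})$ one has $\Omega_{Z_{{\bf s}}}(D)=\Omega_{Z_{{\bf s}}}(I_{{\bf s}})$. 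Now suppose $\calG_{l_{{\bf s}}}\subsetneq\Omega_{Z_{{\bf s}}}(I_{{\bf s}})$ and pick $\omega\in\Omega_{Z_{{\bf s}}}(I_{{\bf s}})\setminus\calG_{l_{{\bf s}}}$ with polar cycle minimal; then $\omega$ has pole of order $o_{v_0}>n_{v_0}:=\min_{k}\{{\bf s}_{v_0,k}\}$ along some $E_{v_0}$, and we fix a chain $(v_0,k_0)$ with ${\bf s}_{v_0,k_0}=n_{v_0}$. For a single fixed form, a generic blow--up of a curve carrying a pole lowers that pole order by exactly one (a nonzero section of a line bundle on $\bP^1$ does not vanish at a generic point), so the fact that the pullback of $\omega$ is pole--free along $F_{v_0,k_0,n_{v_0}}$ while $o_{v_0}>n_{v_0}$ forces some iterated polar part of $\omega$ to vanish at one of the generic centres $p_{v_0,k_0,t}$; tracing the pole orders from that step up to level $n_{v_0}$ produces an intermediate curve $F_{v_0,k_0,\tau}$ ($\tau<n_{v_0}$) along which $\omega$ has pole of order exactly $1$, still with no pole along the other chain--tops. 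The final step is to turn this into a contradiction: running the integral presentation (\ref{eq:int}) of the differential of the Abel map along the codimension--one subfamily $\eca^{l'_{{\bf s}^{v_0,k_0}}}(Z_{{\bf s}^{v_0,k_0}})\hookrightarrow\eca^{l'_{{\bf s}}}(Z_{{\bf s}})$ (cf.\ the discussion around (\ref{eq:jumpint})), together with $\Omega_{Z_{{\bf s}}}(D)=\Omega_{Z_{{\bf s}}}(I_{{\bf s}})$ and the constancy $e_{{\bf s}^{v,k}}=e_{{\bf s}}$, one shows that such an $\omega$ would contradict Lemma~\ref{lem:dualInt} (or the converse of Lemma~\ref{lem:imcA}). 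Hence no such $\omega$ exists, $\calG_{l_{{\bf s}}}=\Omega_{Z_{{\bf s}}}(I_{{\bf s}})$, and the Claim follows.

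The main obstacle is precisely the Key Claim: it is the only place where the finer filtration $\{\calG_l\}_l$ of $2$--forms enters, and the only genuinely analytic point. One must show that the subtle phenomenon of \ref{ss:4.1} --- forms whose iterated polar parts \emph{accidentally} vanish at the chosen generic centres, which in general makes $\calG_{l_{{\bf s}}}$ a proper subspace of $\Omega_{Z_{{\bf s}}}(I_{{\bf s}})$ --- cannot occur exactly when the testing mechanism demands $d_{{\bf s}}=e_{{\bf s}}=d$, so that in that configuration the two test functions coincide. Making the last contradiction fully rigorous, i.e.\ converting ``$\omega$ has a pole of order $1$ along an intermediate curve'' into a violation of $\Omega_{Z_{{\bf s}}}(D)=\Omega_{Z_{{\bf s}}}(I_{{\bf s}})$ through (\ref{eq:int}) and Lemma~\ref{lem:dualInt}, uniformly over the subfamilies $\eca^{l'_{{\bf s}^{v,k}}}(Z_{{\bf s}^{v,k}})$, is the technical core; this is where the more general (twisted) framework of Section~\ref{s:twisted} is brought to bear, which is why the proof is carried out there simultaneously with that of Theorem~\ref{th:ALGORITHM}.
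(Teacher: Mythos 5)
Your reduction of Theorem~\ref{th:ALGORITHM4} to the Key Claim is exactly the route the paper takes: the paper likewise observes that, given the first algorithm, the only new content is the implication ``if $e_{{\bf s}^{v,k}}=e_{{\bf s}}$ for all $(v,k)$ then $e_{{\bf s}}=h^1(\calO_Z)-g_{{\bf s}}$'' (this is precisely (\ref{eq:*}), stated and proved in the twisted setting of Section~\ref{s:twisted}). Up to and including the unpacking of the hypothesis into $e_{{\bf s}^{v,k}}=e_{{\bf s}}$ for all $(v,k)$, your argument is correct and coincides with the paper's.

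The proof of the Key Claim itself, however, has a genuine gap. Given $\omega\in\Omega_{Z_{{\bf s}}}(I_{{\bf s}})\setminus\calG_{l_{{\bf s}}}$ with pole order $o_{v_0}>n_{v_0}$ along $E_{v_0}$, you assert that tracing the pole orders along the chain produces an intermediate curve $F_{v_0,k_0,\tau}$ on which $\omega$ has pole of order exactly $1$. This is false for a single fixed form: at each blow up the pole order drops by $1$ plus the multiplicity of the iterated polar part at the centre, so the sequence of pole orders can jump by $2$ or more and skip the value $1$ entirely (e.g. $3,2,0$). Moreover, even if some intermediate curve did carry pole order exactly $1$, this would not contradict the hypothesis, which constrains only the chain tops: to refute $e_{{\bf s}^{v_0,k_0}}=e_{{\bf s}}$ one must exhibit a form with pole of order exactly $1$ along the top curve $F_{v_0,k_0,{\bf s}_{v_0,k_0}}$ and no pole one level further, whereas your $\omega$ by assumption has no pole there at all. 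The missing ingredient is the paper's technical Lemma~\ref{lem:techn} (all of Section~\ref{s:techn}): locating the first index $r$ with $b_{v_{r-1}}+t-r+1<0\leq b_{v_r}+t-r$ (which forces an excess jump $b_{v_r}-b_{v_{r-1}}\geq 2$, i.e. a section not vanishing along $E_{v_{r-1}}$ but with multiplicity $m=b_{v_r}-b_{v_{r-1}}-1\geq 1$ at the generic centre), one must manufacture a \emph{different} form $\omega'$ in the same linear system whose multiplicity at a generic point of that curve is the calibrated value $k=-(b_{v_{r-1}}+t-r+1)-1$, so that after the remaining generic blow ups $\omega'$ has pole order exactly $1$ at level $t={\bf s}_{v_0,k_0}$ and $0$ at level $t+1$. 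Your appeal to (\ref{eq:int}) and Lemma~\ref{lem:dualInt} in the ``final step'' cannot substitute for this: those results govern the comparison of $d_{{\bf s}}$ with $e_{{\bf s}}$ (tangent spaces of the Abel map versus forms), whereas the Key Claim compares $e_{{\bf s}}$ with $h^1(\calO_Z)-g_{{\bf s}}$, a statement purely about the existence of forms with prescribed pole behaviour, and the required existence statement is exactly what Lemma~\ref{lem:techn} supplies.
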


For the proof  see section \ref{s:twisted}.

\begin{corollary}\label{cor:formula2} With the notations of \ref{ss:4.1} and of Theorem
\ref{th:ALGORITHM4}, for
 $l' \in - S'$ and $Z\geq E$ one has
\begin{equation}\label{eq:form2}
d_Z(l') = \min_{{\bf s}}\{\, |{\bf s}| + h^1(\calO_Z) - g_{{\bf s}}\,\}.
\end{equation}
\end{corollary}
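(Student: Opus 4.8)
The plan is to repeat, essentially verbatim, the passage from Theorem \ref{th:ALGORITHM} to Corollary \ref{th:ALGORITHM3}, now using Theorem \ref{th:ALGORITHM4} in place of Theorem \ref{th:ALGORITHM} and the bound $(\ref{eq:ws})$ in place of $(\ref{eq:6.2b})$. Abbreviate $\tau_{{\bf s}}:=h^1(\calO_Z)-g_{{\bf s}}$; this is the test function fed into the pattern. Two inputs are needed. First, the pointwise inequality $d_{{\bf s}}\le\tau_{{\bf s}}$ for every tuple ${\bf s}$, which is exactly the right-hand inequality in $(\ref{eq:ws})$ (it comes from $\calG_{{\bf s}}\subset\Omega_{Z_{{\bf s}}}(I_{{\bf s}})$ together with $(\ref{eq:6.2b})$). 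Second, the jump estimate $d_{{\bf s}}-d_{{\bf s}^{v,k}}\in\{0,1\}$ of Theorem \ref{th:ALGORITHM4}(1); iterating it along a chain from ${\bf s}$ to any $\widetilde{{\bf s}}\ge{\bf s}$ gives $d_{{\bf s}}-d_{\widetilde{{\bf s}}}\le|\widetilde{{\bf s}}-{\bf s}|$.

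First I would prove the inequality $\le$ in $(\ref{eq:form2})$. Combining the two inputs, for every $\widetilde{{\bf s}}\ge{\bf s}$ one gets $d_{{\bf s}}\le|\widetilde{{\bf s}}-{\bf s}|+d_{\widetilde{{\bf s}}}\le|\widetilde{{\bf s}}-{\bf s}|+\tau_{\widetilde{{\bf s}}}$, hence $d_{{\bf s}}\le\min_{\widetilde{{\bf s}}\ge{\bf s}}\{|\widetilde{{\bf s}}-{\bf s}|+\tau_{\widetilde{{\bf s}}}\}$; taking ${\bf s}={\bf 0}$ and recalling $d_{{\bf 0}}=d_Z(l')$ gives one half of the claim.

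For equality I would produce a realizing $\widetilde{{\bf s}}$ by a descent, exactly as in the proof of Corollary \ref{th:ALGORITHM3}. Put ${\bf s}_0:={\bf s}$; while there is a pair $(v,k)$ with $d_{{\bf s}_i}=d_{({\bf s}_i)^{v,k}}+1$, set ${\bf s}_{i+1}:=({\bf s}_i)^{v,k}$. Each step decreases the nonnegative integer $d_{{\bf s}_i}$ by one, so the process terminates at some ${\bf s}_t$; by part (1) of Theorem \ref{th:ALGORITHM4}, at ${\bf s}_t$ one then has $d_{{\bf s}_t}=d_{({\bf s}_t)^{v,k}}$ for \emph{every} $(v,k)$. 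Thus the numbers $\{d_{({\bf s}_t)^{v,k}}\}_{v,k}$ all coincide, so the second clause of Theorem \ref{th:ALGORITHM4}(2) is the one in force; since at ${\bf s}_t$ there is no jump (i.e. $d_{{\bf s}_t}$ equals that common value), this clause forces the common value to equal $\tau_{{\bf s}_t}=h^1(\calO_Z)-g_{{\bf s}_t}$. Therefore $d_{{\bf s}}=d_{{\bf s}_t}+|{\bf s}_t-{\bf s}|=\tau_{{\bf s}_t}+|{\bf s}_t-{\bf s}|$, realizing the minimum. Setting ${\bf s}={\bf 0}$ gives $(\ref{eq:form2})$; the minimum there, nominally over all tuples, is attained at a finite stage because $d_{{\bf s}}$ stabilizes for large ${\bf s}$ (end of \ref{bek:algsetup}) and $g_{{\bf s}}$ stabilizes to $h^1(\calO_Z)$ once $l_{{\bf s}}=Z$, so $|{\bf s}|+\tau_{{\bf s}}$ is eventually just $|{\bf s}|\to\infty$.

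I do not expect a genuine obstacle at the level of the corollary: all the substance is in Theorem \ref{th:ALGORITHM4}, and what remains is a purely combinatorial consequence of the pattern, formally identical to Corollary \ref{th:ALGORITHM3}. The only points that need a touch of care are the verification that the descent terminates with \emph{all} neighbouring values equal (so that the ``common value'' alternative of Theorem \ref{th:ALGORITHM4}(2), rather than the jump alternative, applies and pins the common value to $\tau_{{\bf s}_t}$), and the elementary stabilization bookkeeping that turns the nominal infinite minimum into a finite one.
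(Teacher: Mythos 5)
Your proposal is correct and is essentially the paper's own argument: the paper disposes of Corollary \ref{cor:formula2} with the single line ``The proof runs similarly as the proof of Corollary \ref{th:ALGORITHM3}'', and your write-up is precisely that transposition, substituting Theorem \ref{th:ALGORITHM4} for Theorem \ref{th:ALGORITHM} and the bound (\ref{eq:ws}) for (\ref{eq:6.2b}), with the same descent producing the realizing $\widetilde{{\bf s}}$. The extra care you take about why the terminal step of the descent pins $d_{{\bf s}_t}$ to $h^1(\calO_Z)-g_{{\bf s}_t}$ and about the stabilization making the minimum finite is sound and consistent with the paper's remarks at the end of \ref{bek:algsetup}.
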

The proof runs similarly as the proof of Corollary \ref{th:ALGORITHM3}.

The formula (\ref{eq:form2}) can be rewritten in a different flavour.
\begin{corollary}\label{cor:formula3}
For $l' \in - S'$ and $Z\geq E$ one has
\begin{equation}  \label{eq:form3}
d_Z(l') = \min_{0\leq Z_1 \leq Z}\{\, (l', Z_1) + h^1(\calO_Z) - h^1(\calO_{Z_1})\, \}.
\end{equation}
\end{corollary}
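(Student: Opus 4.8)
The statement to prove is Corollary \ref{cor:formula3}, deriving
\[
d_Z(l') = \min_{0\leq Z_1 \leq Z}\{\, (l', Z_1) + h^1(\calO_Z) - h^1(\calO_{Z_1})\, \}
\]
from the already-established formula \eqref{eq:form2},
\[
d_Z(l') = \min_{{\bf s}}\{\, |{\bf s}| + h^1(\calO_Z) - g_{{\bf s}}\,\}.
\]

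The plan is to show that the two minimizations coincide by matching up the quantities being minimized. The key bridge is the definition of $l_{{\bf s}}$ and the identity $g_{{\bf s}} = \dim \calG_{l_{{\bf s}}} = h^1(\calO_{l_{{\bf s}}})$ (from Definition \ref{bek:filtrforms}, $\calG_l \simeq H^1(\calO_l)^*$). So the right-hand side of \eqref{eq:form2} equals $\min_{{\bf s}}\{\,|{\bf s}| + h^1(\calO_Z) - h^1(\calO_{l_{{\bf s}}})\,\}$.

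First I would prove the inequality $\text{(RHS of \eqref{eq:form3})} \leq \text{(RHS of \eqref{eq:form2})}$. Given any tuple ${\bf s}$, set $Z_1 := l_{{\bf s}} = \min\{\sum_v (\min_{k_v} {\bf s}_{v,k_v}) E_v,\, Z\}$, which satisfies $0 \leq Z_1 \leq Z$. I need $(l', Z_1) \leq |{\bf s}|$. Writing $-l' = \sum_v a_v E_v^*$, we have $(l', Z_1) = (l', \sum_v n_v E_v) = \sum_v n_v a_v$ where $n_v \leq \min_{1\leq k_v \leq a_v}\{{\bf s}_{v,k_v}\}$; hence $\sum_v n_v a_v \leq \sum_v a_v \min_{k_v}\{{\bf s}_{v,k_v}\} \leq \sum_v \sum_{k_v=1}^{a_v} {\bf s}_{v,k_v} = |{\bf s}|$. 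This yields $(l',Z_1) + h^1(\calO_Z) - h^1(\calO_{Z_1}) \leq |{\bf s}| + h^1(\calO_Z) - g_{{\bf s}}$, establishing the inequality after taking minima.

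Conversely, for the reverse inequality, given $0 \leq Z_1 \leq Z$ with $Z_1 = \sum_v n_v E_v$, I would construct a tuple ${\bf s}$ by setting ${\bf s}_{v,k_v} := n_v$ for all $v$ with $a_v > 0$ and all $1 \leq k_v \leq a_v$. Then $\min_{k_v}\{{\bf s}_{v,k_v}\} = n_v$ for those $v$, so $l_{{\bf s}} = \min\{\sum_{v: a_v>0} n_v E_v,\, Z\}$. Here is the subtlety: $l_{{\bf s}}$ only sees the coordinates $n_v$ with $a_v > 0$, and it is capped by $Z$; it need not equal $Z_1$. However, $|{\bf s}| = \sum_{v: a_v > 0} a_v n_v = (l', Z_1)$ exactly (coordinates with $a_v = 0$ contribute nothing to $(l',Z_1)$), and since $l_{{\bf s}} \leq Z_1$ componentwise on the support of $l'$ — actually $l_{{\bf s}} = \min\{\sum_{v:a_v>0}n_vE_v, Z\}$ while $(l',l_{{\bf s}}) = (l', \sum_{v:a_v>0}n_vE_v)$ because $l'$ is supported on $\{a_v>0\}$ and $l_{{\bf s}}$ agrees with $\sum_{v:a_v>0}n_vE_v$ wherever it matters for the pairing (the cap by $Z$ can only decrease coordinates, and where it decreases them below $n_v$ on $\mathrm{supp}(l')$ we still have $(l',l_{{\bf s}})\le(l',Z_1)=|{\bf s}|$). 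What I actually want is $h^1(\calO_Z) - g_{{\bf s}} \le h^1(\calO_Z) - h^1(\calO_{Z_1})$ together with $|{\bf s}| \le (l',Z_1)$, i.e. $g_{{\bf s}} \ge h^1(\calO_{Z_1})$ — but this fails in general since $l_{{\bf s}}$ may be smaller than $Z_1$.

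So the honest route, which I expect to be the main obstacle, is to not insist on matching ${\bf s}$ to a prescribed $Z_1$, but instead to prove both formulas equal a common third quantity, or to verify directly that every cycle $0 \leq Z_1 \leq Z$ of the form $Z_1 = \min\{\sum_{v:a_v>0} n_v E_v, Z\}$ already achieves the minimum in \eqref{eq:form3} — this uses a monotonicity/convexity property of $Z_1 \mapsto (l',Z_1) - h^1(\calO_{Z_1})$ showing the minimum is attained at a cycle supported on $|l'|$ and ``saturated'' against $Z$, which is exactly the shape of $l_{{\bf s}}$. Concretely: if $Z_1$ minimizes the RHS of \eqref{eq:form3}, one shows replacing $Z_1$ by $\min\{Z_1', Z\}$ where $Z_1' = \sum_{v \in |l'|}(Z_1)_v E_v^{\mathrm{round}}$... — here one invokes that $h^1(\calO_{Z_1})$ and $(l',Z_1)$ behave well under such replacements, appealing to Lemma \ref{lem:MINSETS} or the structure results referenced around \eqref{eq:form3}. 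Given these reduction lemmas on the minimizing set, the equality $g_{{\bf s}} = h^1(\calO_{l_{{\bf s}}})$ and the arithmetic identities $|{\bf s}| = (l', l_{{\bf s}})$ (for tuples of the special constant form) close the argument. The routine direction is the $\leq$ proven above; the work is in showing the minimum over all ${\bf s}$ already exhausts the relevant cycles $Z_1$, which is where I would spend the effort.
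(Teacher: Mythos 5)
Your first inequality (RHS of (\ref{eq:form3}) $\leq$ RHS of (\ref{eq:form2})) is correct and is exactly the paper's argument. The second half, which you acknowledge you have not closed, is a genuine gap as written --- but it is a gap created by a misreading of the definition of $l_{{\bf s}}$, and once that is corrected the converse becomes a two-line argument rather than the ``main obstacle'' you anticipate. In $l_{{\bf s}}=\min\{\sum_{v\in\calv}\min_{1\leq k_v\leq a_v}\{{\bf s}_{v,k_v}\}E_v,\,Z\}$ the inner minimum at a vertex with $a_v=0$ is over an empty index set and must be read as $+\infty$, so that $(l_{{\bf s}})_v=Z_v$ for $v\notin|l'|$; you implicitly took it to be $0$, which is why your candidate $l_{{\bf s}}$ came out smaller than $Z_1$ and the inequality $g_{{\bf s}}\geq h^1(\calO_{Z_1})$ seemed to fail. (That the ``saturated outside $|l'|$'' convention is the intended one can be checked against Example \ref{ex:dom}(1): the formula must be able to produce $Z_1=Z|_{\calv\setminus I}$, which is exactly $l_{{\bf 0}}$ under this reading, whereas under your reading $l_{{\bf 0}}=0$ and (\ref{eq:form2}) would give the wrong value of $d_Z(l')$ whenever $h^1(\calO_{Z|_{\calv\setminus I}})>0$.)

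With the correct reading the converse is: since $h^1(\calO_{Z_1})$ is non-decreasing in $Z_1$ while $(l',Z_1)=\sum_{v\in|l'|}a_v(Z_1)_v$ ignores the coefficients outside $|l'|$, the minimum in (\ref{eq:form3}) is attained at some $Z_1$ with $(Z_1)_v=Z_v$ for all $v\notin|l'|$; for such a $Z_1$ the constant tuple ${\bf s}_{v,k}:=(Z_1)_v$ gives $l_{{\bf s}}=Z_1$ and $|{\bf s}|=\sum_{v\in|l'|}a_v(Z_1)_v=(l',Z_1)$, so this $Z_1$ is reached by (\ref{eq:form2}). This is what the paper's terse sentence ``any such $Z_1$ can be represented as a certain $l_{{\bf s}}$'' means. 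Note that the reduction you propose is stated backwards --- the minimizer should be \emph{saturated} against $Z$ outside $|l'|$, not \emph{supported} on $|l'|$; cycles supported on $|l'|$ would in general give a strictly larger value. Finally, your suggested appeal to Lemma \ref{lem:MINSETS} would be circular, since that lemma is proved using (\ref{eq:form3}).
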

\begin{proof}
From \ref{bek:filtrforms} $g_{{\bf s}}=\dim \calG_{{\bf s}}=h^1(\calO_{l_{{\bf s}}})$ and also
$|{\bf s}|\geq \sum _va_v (l_{{\bf s}})_v=(l',l_{{\bf s}})$, and $0\leq l_{{\bf s}} \leq Z$,
 hence
$ \min_{{\bf s}}\{\, |{\bf s}| + h^1(\calO_Z) - g_{{\bf s}}\,\}\geq
 \min_{0\leq Z_1 \leq Z}\{\, (l', Z_1) + h^1(\calO_Z) - h^1(\calO_{Z_1})\, \}$.
 The opposite inequality is also true since
 any such $Z_1$ can be represented as  a certain $l_{{\bf s}}$ with
 $|{\bf s}|=(l',l_{{\bf s}})$.
\end{proof}
\begin{example}\label{ex:dom} (1) {\bf ($c^{l'}(Z)$ constant)} \ For any
$0\leq Z_1\leq Z$ one has  $ (l', Z_1)\geq 0$ and  $h^1(\calO_Z) \geq  h^1(\calO_{Z_1})$, hence
$d_Z(l')=0$ happens exactly when there exists $Z_1$ with
$ (l', Z_1) + h^1(\calO_Z) - h^1(\calO_{Z_1})=0$, or, $ (l', Z_1)=0$ and
 $h^1(\calO_Z) = h^1(\calO_{Z_1})$. This means that $Z_1\leq Z|_{\calv\setminus I}$, where $I$ is the
 $E^*$--support of $l'$, a fact which
 (together with  $h^1(\calO_Z) = h^1(\calO_{Z_1})$)
 implies $h^1(\calO_Z) = h^1(\calO_{Z|_{\calv\setminus I}})$ too. Hence, $d_Z(l')=0$ if and only if
 $h^1(\calO_Z) = h^1(\calO_{Z|_{\calv\setminus I}})$. This is exactly the statement of
 \cite[6.3(v)]{NNI}.

(2) {\bf $c^{l'}(Z)$ is dominant} if and only if $d_Z(l')=h^1(\calO_Z)$, hence, via (\ref{eq:form3}),
if and only if $h^1(\calO_{Z_1})\leq (l',Z_1)$  for any $0\leq Z_1\leq Z$.
This can be seen in a different way as follows.
First, if $c^{l'}(Z)$ is dominant, then, for any $0<Z_1\leq Z$,  $c^{l'}(Z_1)$ is dominant too, hence
 $(l', Z_1) = \dim(\eca^{l'}(Z_1) ) \geq  \dim(H^1(\calO_{Z_1}))$. Conversely, if
$ (l',Z_1)\geq h^1(\calO_{Z_1})$ and $Z_1 > 0$ then $(l',Z_1)-h^1(\calO_{Z_1})>-h^0(\calO_{Z_1})$,
that is, $\chi(-l')<\chi(-l'+Z_1)$, hence $c^{l'}(Z)$ is dominant by \cite[Thm. 4.1.1]{NNI}, cf.
\ref{ss:DOMINANT} here. Note that the characterization \ref{ss:DOMINANT} for dominant property is topological.

(3) By (\ref{eq:form3}) {\bf  $\im (c^{l'}(Z))$ is a hypersurface} if and only if
$\min_{0\leq Z_1 \leq Z} \{(l',Z_1)-h^1(\calO_{Z_1})\}=-1$. Since $h^0(\calO_{Z_1})\geq 1$, this implies
that
$\chi(-l')=\min_{0\leq l\leq Z} \chi(-l' +l)$.

The  converse statement is not true: take e.g. a Gorenstein elliptic singularity
with length of elliptic sequence $m+1$. (For elliptic singularities consult \cite{weakly,NNIII,NNtop}.
For more on the Abel map of elliptic singularities see \cite{NNIII}.)
Set $Z\gg 0$ and $-l'=Z_{min}$, the fundamental (minimal)
cycle. Then $\im (c^{l'}(Z))=1$ and $h^1(Z)=p_g=m+1$. However,
$\chi(Z_{min})=\min_{0\leq l\leq Z} \chi(Z_{min} +l)=0$. Therefore, if $m=1$ then
$\im (c^{l'})$ is a hypersurface, but for $m\geq 2$ it is not. It is instructive to
consider with the same topological data (elliptic numerically
Gorenstein singularity with $m\geq 1$, $Z\gg 0$,
$-l'=Z_{min}$) the generic analytic structure. Then $p_g=1$ (cf. \cite{Laufer77,NNII}) but
$\im (c^{l'}(Z))$ is a point (this follows from part {\it (1)} too). Hence
$\im (c^{l'}(Z))$ is a hypersurface for any $m\geq 1$. In particular, the property that
$\im (c^{l'}(Z))$ is a hypersurface  is not a topological property.

\end{example}
\begin{example}\label{ex:superisol} {\bf (Superisolated singularities)} \
Assume that $(X,o)$ is a hypersurface superisolated singularity whose link is a rational homology sphere. More precisely, $(X,o)=\{F(x_1,x_2,x_3)=0\}$, where the homogeneous terms $F_i$ of $F$ are as follows:
$\{F_d=0\}$ defines an irreducible rational cuspidal curve in ${\mathbb C}{\mathbb P}^2$ and
$\{F_{d+1}=0\}\cap {\rm Sing}\{F_d=0\}$ is empty in ${\mathbb C}{\mathbb P}^2$. (For details see
\cite{Ignacio,LMNsi,NNI}.) Consider the minimal good resolution and let $E_0$ be the irreducible exceptional curve corresponding to $C$ (the exceptional curve of the first blow up of the maximal ideal).
Assume that $l'=-kE_0^*$ for some $k\geq 1$ and $Z\geq Z_K$. For any
${\bf m}=(m_1,m_2,m_3)\in \Z_{\geq 0}^3$ write $|{\bf m}|=\sum_im_i$.
Then by the discussion from
\cite[11.2]{NNI}  one has the following facts: $p_g=d(d-1)(d-2)/6=\#\{{\bf m}: |{\bf m}|\leq d-3\}$,
this is exactly the cardinality  of the
 set of forms of type ${\bf x}^{{\bf m}}\omega$, where $\omega$ is the Gorenstein form.
 The pole order of  $\omega$ along $E_0$ is $d-2$, and the vanishing order of ${\bf x}^{{\bf m}}$ along $E_0$ is $|{\bf m}|$. $\{{\bf x}^{{\bf m}}\omega\}_{{\bf m}}$
 constitute a basis in
 $H^0(\Omega^2_{\tX}(Z))/H^0(\Omega^2_{\tX})$. Hence, for $0\leq s\leq d-2$ one has  $g_{s}= \dim  \calG_{sE_0}=
\#\{{\bf m}: d-2-s\leq |{\bf m}|\leq d-3\}$  and  $h^1(\calO_Z)-g_s=\binom{d-s}{3}$. %(d-s)(d-s-1)(d-s-2)/6$.
In particular,
$$d_{Z}(-kE^*_0)=\min_{0\leq s\leq d-2}\ \{ks+\textstyle{\binom{d-s}{3}}\}.$$
%(d-s)(d-s-1)(d-s-2)/6\}.$$
In \cite[11.2]{NNI} $d_Z(-kE^*_0)$ was computed in a different way as $ \sum_{j=0}^{d-3}
\min\{k, \binom{j+2}{2}\}$. The identification of the two numerical answers is left to the reader.
(Use $\sum_{j=0}^t\binom{j+2}{2}=\binom{t+3}{3}$.)
\end{example}
\begin{example}\label{ex:wh}
For {\bf weighted homogeneous germs} (and $l'=-kE^*_0$, where $E_0$ is the central vertex of the star shaped graph) $d_Z(l')$ was computed by a similar method in \cite[\S 12]{NNI}.
\end{example}

%\begin{example}\label{ex:splQuot} {\bf (Splice quotient singularities)} \
%Assume that $(X,o)$ is a splice quotient singularity associated with a resolution graph
%$\Gamma$ (cf.
%\cite{NWsq,Ok}). We claim that $\dim\, \im c^{l'}(Z)$ is topological, that is, computable from
%the combinatorics of $\Gamma$. Indeed, this follows from (\ref{eq:form3}) and from the fact that
%any $h^1(\calO_l)$ is computable topologically  ??????????????????????????????????
%\end{example}

\begin{remark}\label{rem:rem}
 (1) In Theorems \ref{th:ALGORITHM} and \ref{th:ALGORITHM4} (and Corollaries
\ref{th:ALGORITHM3} and \ref{cor:formula2} as well) the functions ${\bf s}\mapsto e_{{\bf s}}$
and ${\bf s}\mapsto h^1(\calO_Z)-g_{{\bf s}}$ serve as `test--functions':
``if this common value $d_{{\bf s}^{v, k}}$ equals the test value, then $d_{{\bf s}} =
d_{{\bf s}^{v, k}}$,   otherwise $d_{{\bf s}} = d_{{\bf s}^{v, k}}+1$''.
Via this fact in mind, the second algorithm is rather surprising: the test function for each
fixed $v$ depends only on ${\bf s}\mapsto \min _{0\leq k_v\leq a_v}s_{v,k_v}=(l_{{\bf s}})_v$,
hence does not depend on the number of integers
$\{s_{v,k_v}\}_{0\leq k_v\leq a_v}$, or, on $a_v$. However,
the final output, namely $d_{{\bf s}}$  (and the right hand side of (\ref{eq:form2})    and
the algorithm itself)
do depend on $l'$.  We  encourage the reader to work out the algorithm for an example
when $a_v\geq 2$ (say, for $-l'=2E_v^*$).
%\marginpar{ For this the following discussion might help.}

(2)
Notice that the formulas $  \min_{{\bf s}}( |{\bf s}| + h^1(Z) - g_{{\bf s}})$ and $\min_{{\bf s}}( |{\bf s}| + e_{{\bf s}})$ can be defined without any restriction on the numbers $g_{{\bf s}}$ and $e_{{\bf s}}$,
however in our case these numbers are restricted.
For example we have $ \min_{{\bf s} \geq {\bf s}_1}( |{\bf s}| - |{\bf s}_1| + h^1(Z) - g_{{\bf s}}) - \min_{{\bf s} \geq {\bf s}_1^{v, k}}( |{\bf s} \geq {\bf s}_1^{v, k}| +  h^1(Z) - g_{{\bf s}}) \in \{0, 1\}$ for all $v, k, {\bf s}_1$.
Or, $g_{{\bf s}} \leq |{\bf s}|$ for all ${\bf s}$ if and only if $\chi(-l') < \chi(-l'+ l)$ for all $Z \geq l > 0$ (cf. Example \ref{ex:dom}(2)).

(3) {\bf (Bounds for ${\rm codim}\, \im \, c^{l'}(Z)$)} \
In some expression the codimension of $\im ( c^{l'}(Z))$
 appears more naturally. E.g., we have the following two
general statements from \cite[Prop. 5.6.1]{NNI} (under the conditions of  Corollary \ref{cor:formula3}):

(a)  $h^1(Z,\calL)\geq {\rm codim}\, \im( c^{l'}(Z))$ for any
$\calL\in \im( c^{l'}(Z))$.
Equality holds whenever $\calL$ is generic in $\im( c^{l'}(Z))$.

(b) ${\rm codim}\, \im \, c^{l'}(Z)\geq \chi(-l')- \min_{0\leq l\leq Z} \chi(-l' +l)$,
and this inequality is strict whenever $c^{l'}(Z)$ is not dominant.
(This can be compared with the discussion from Example \ref{ex:dom}(3).)

Note that Corollary \ref{cor:formula3} reads as:
\begin{equation}\label{eq:codim}
{\rm codim}\, \im ( c^{l'}(Z))=
\max_{0\leq Z_1\leq Z}\, \{ \, h^1(\calO_{Z_1})-(l',Z_1)\,\}.
\end{equation}
\end{remark}

%\marginpar{more examples ????? applications , superisolated? weighted homogeneous?}
%\section{Structure theorem for the Abel map}\label{s:structTH}

\bekezdes Before we state the next theorem let us emphasise the obvious  fact
that  for any
$0\leq Z_1\leq Z$ the natural restriction (linear projection) $r:H^1(\calO_{Z})\to
H^1(\calO_{Z_1})$ is surjective, hence for any irreducible constructible
subset $C_1\subset H^1(\calO_{Z_1})$  one has $\dim r^{-1}(C_1)-\dim C_1=h^1(\calO_{Z})-
h^1(\calO_{Z_1})$.

However, though the restriction of $r$ to $\im (c^{l'}(Z))\to \im (c^{l'}(Z_1))$ is dominant, in general $\dim \im (c^{l'}(Z))$ can be smaller than $\dim r^{-1}(\im (c^{l'}(Z_1)))$.

%Furthermore, if $C$ is an irreducible constructible set in $H^1(\calO_Z)$, then
%the generic fiber of the restriction of $r_{Z_1}$, $C\to r_{Z_1}(C)$, has dimension
%$ h^1(\calO_{Z})- h^1(\calO_{Z_1})$ if and only if $r_{Z_1}^{-1}(r_{Z_1}(C))\subset
%\overline {C}$ (that is $\overline{C}$ contains complete fibers of $r_{Z_1}$).
%Note also that $r_{Z_1}^{-1}(r_{Z_1}(C))=\ker(r_{Z_1})+C$.

\bekezdes
It is instructive to see that certain  extremal geometric phenomenons
(indexed by effective cycles) are realized by the very same set of cycles.
\begin{lemma}\label{lem:MINSETS}
The following three sets of cycles coincide (for fixed $Z\geq E$ and $l'\in-\calS'$ as above):

(I) the set of cycles $Z_1$ with $0\leq Z_1\leq Z$ realizing the minimality in (\ref{eq:form3}), that is: $d_Z(l')=(l',Z_1)+h^1(\calO_Z)-h^1(\calO_{Z_1})$.

(II) the set of cycles $Z_1$ with $0\leq Z_1\leq Z$ such that
 {\it (i)} the map
$\eca^{l'}(Z) \to H^1(Z_1)$ is birational onto its image,
and {\it (ii) }the generic fibres of the restriction of $r$,
$r^{im}:\im(c^{l'}(Z)) \to \im(c^{l'}(Z_1))$,  have dimension $h^1(\calO_{Z})-h^1(\calO_{Z_1})$.
(That is, the fibers of $r^{im}$   have maximal possible dimension.)

(III)  the set of cycles $Z_1$ with $0\leq Z_1\leq Z$ such that  for the generic element
$\calL_{gen}^{im}\in\im (c^{l'}(Z))$  and arbitrary section $s\in H^0(Z_1, \calL_{gen}^{im})_{reg}$ with divisor $D$
{\it (i)} in the (analogue of the
Mittag-Lefler sequence associated with the exact sequence $0\to \calO_{Z_1}\stackrel{\times s}{\longrightarrow} \calL_{gen}^{im} \to \calO_D\to 0$, cf. \cite[3.2]{NNI}),
$$0\to H^0(\calO_{Z_1})\stackrel{\times s}{\longrightarrow} H^0(Z_1,\calL_{gen}^{im})
\to \bC^{(Z_1,l')}
\stackrel{\delta}{\longrightarrow} H^1(\calO_{Z_1})\to h^1(Z_1,\calL_{gen}^{im})\to 0$$
$\delta$ is injective, and {\it (ii)} $h^1(Z,\calL_{gen}^{im})=h^1(Z_1,\calL_{gen}^{im})$.
\end{lemma}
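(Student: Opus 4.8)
The plan is to prove the equality of the three sets (I), (II), (III) by establishing a cycle of implications, or more precisely by showing that each is characterized by the same underlying dimension count.  The key structural input is the factorization of the Abel map through restrictions: for $0\le Z_1\le Z$ one has a commutative diagram relating $\eca^{l'}(Z)$, $\eca^{l'}(Z_1)$, $\im(c^{l'}(Z))$, $\im(c^{l'}(Z_1))$ and the linear projection $r:H^1(\calO_Z)\to H^1(\calO_{Z_1})$, which is surjective with fibers of dimension $h^1(\calO_Z)-h^1(\calO_{Z_1})$.  Throughout I will use that $\eca^{l'}(Z)$ is irreducible smooth of dimension $(l',Z)$, that $\eca^{l'}(Z)\to\eca^{l'}(Z_1)$ is the obvious forgetful/restriction map (dominant, with generic fiber dimension $(l',Z)-(l',Z_1)=(l',Z-Z_1)$), and that $\dim\im(c^{l'}(Z))=d_Z(l')$.

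First I would record the basic dimension inequalities.  Since $\eca^{l'}(Z)\to\im(c^{l'}(Z))\hookrightarrow H^1(\calO_Z)\xrightarrow{r}H^1(\calO_{Z_1})$ factors through $\eca^{l'}(Z)\to\eca^{l'}(Z_1)\to\im(c^{l'}(Z_1))\hookrightarrow H^1(\calO_{Z_1})$ (this is exactly the functoriality of $\widetilde c$ under restriction), the composite $\eca^{l'}(Z)\to H^1(\calO_{Z_1})$ has image contained in $\im(c^{l'}(Z_1))$, so its generic fiber has dimension at least $(l',Z)-\dim\im(c^{l'}(Z_1))$.  On the other hand, restricting to $\im(c^{l'}(Z))$, the map $r^{im}:\im(c^{l'}(Z))\to\im(c^{l'}(Z_1))$ has fibers of dimension at most $h^1(\calO_Z)-h^1(\calO_{Z_1})$ (being restrictions of fibers of $r$), so
\begin{equation*}
d_Z(l')\le \dim\im(c^{l'}(Z_1))+h^1(\calO_Z)-h^1(\calO_{Z_1})\le (l',Z_1)+h^1(\calO_Z)-h^1(\calO_{Z_1}),
\end{equation*}
the last step by $\dim\im(c^{l'}(Z_1))\le(l',Z_1)$.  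Now $Z_1$ lies in set (I) iff equality holds throughout in this chain.  This forces simultaneously (a) $\dim\im(c^{l'}(Z_1))=(l',Z_1)$, i.e.\ $c^{l'}(Z_1)$ is dominant, which (since $\eca^{l'}(Z_1)$ is irreducible and $c^{l'}(Z_1)$ generically finite onto its image when dominant) is equivalent to $\eca^{l'}(Z)\to H^1(\calO_{Z_1})$ being generically finite, hence — after checking injectivity on a dense open, which follows because a dominant map from an irreducible variety to an affine space of the same dimension that is quasi-finite is birational onto image in our setting (use generic smoothness and Theorem~\ref{th:Formsres}(c) to see the tangent map is an isomorphism at a generic $D$) — condition (II)(i); and (b) the fiber of $r^{im}$ over a generic point attains its maximal value $h^1(\calO_Z)-h^1(\calO_{Z_1})$, which is exactly (II)(ii).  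Conversely (II)(i)+(II)(ii) give back the two equalities, hence (I)$\Leftrightarrow$(II).

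Next I would translate (II) into the cohomological language of (III).  Fix the generic bundle $\calL:=\calL_{gen}^{im}\in\im(c^{l'}(Z))$ and a generic section $s\in H^0(Z_1,\calL)_{reg}$ with divisor $D$.  Plugging into the Mittag-Leffler sequence of $0\to\calO_{Z_1}\xrightarrow{\times s}\calL\to\calO_D\to 0$, injectivity of the connecting map $\delta$ is equivalent to $h^0(Z_1,\calL)-h^0(\calO_{Z_1})=(Z_1,l')$, i.e.\ to the statement that through a generic point of the fiber $\eca^{l'}(Z_1)$ the Abel map $c^{l'}(Z_1)$ is locally an embedding of the right dimension near $D$ — precisely the infinitesimal form of (II)(i), birationality of $\eca^{l'}(Z)\to H^1(Z_1)$ combined with $h^1(Z_1,\calL)=h^1(Z,\calL)$.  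Indeed, $h^1(Z,\calL)=h^1(\calO_Z)-d_Z(l')$ for generic $\calL$ by Theorem~\ref{prop:AZ}(d), and likewise $h^1(Z_1,\calL)=h^1(\calO_{Z_1})-\dim\im(c^{l'}(Z_1))$; condition (III)(ii) $h^1(Z,\calL)=h^1(Z_1,\calL)$ then reads $d_Z(l')-\dim\im(c^{l'}(Z_1))=h^1(\calO_Z)-h^1(\calO_{Z_1})$, which is again the fiber-dimension equality of (II)(ii), while (III)(i) is the dominance/birationality encoded in (II)(i).  So (II)$\Leftrightarrow$(III), completing the circle.

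The main obstacle I anticipate is the passage between ``$\eca^{l'}(Z)\to H^1(\calO_{Z_1})$ is generically finite'' and ``birational onto its image'': generic finiteness is a dimension count, but birationality (degree one) requires genuinely using the geometry of the Abel map, most likely the description of the image of the tangent map in Theorem~\ref{th:Formsres}(c) together with a monodromy/connectedness argument for the generic fiber, or alternatively the additive structure $s^{l'_1,l'_2}$ and quasi-finiteness results quoted from \cite{NNI}.  A secondary subtlety is verifying that the ``generic'' bundle $\calL_{gen}^{im}$ and the ``generic'' section $s$ can be chosen compatibly so that all three genericity requirements in (II) and (III) hold simultaneously; this is a standard openness argument (finite intersection of dense opens in the irreducible parameter space $\eca^{l'}(Z)$), but it must be stated carefully since (III) quantifies over \emph{arbitrary} $s\in H^0(Z_1,\calL_{gen}^{im})_{reg}$ once $\calL_{gen}^{im}$ is generic, so one needs that for generic $\calL$ the condition on $s$ is automatic — this follows because the locus of bad sections is a proper closed subset invariant under the relevant group action, forcing it to be empty once $\calL$ avoids a proper closed set.
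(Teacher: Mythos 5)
Your strategy is the same as the paper's: (I)$\Leftrightarrow$(II) comes from the chain $d_Z(l')\le \dim\im(c^{l'}(Z_1))+h^1(\calO_Z)-h^1(\calO_{Z_1})\le (l',Z_1)+h^1(\calO_Z)-h^1(\calO_{Z_1})$, with membership in (I) forcing equality at both steps, and (II)$\Leftrightarrow$(III) comes from identifying $\delta$ with the tangent map of $c^{l'}(Z_1)$ at $D$ together with $h^1(Z,\calL_{gen}^{im})={\rm codim}\,\im(c^{l'}(Z))$ (Remark \ref{rem:rem}(3)(a)). Two of your individual steps, however, are wrong as written.

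First, the numerical criterion you give for injectivity of $\delta$ is reversed: exactness of the Mittag--Leffler sequence gives $\dim\ker\delta=h^0(Z_1,\calL_{gen}^{im})-h^0(\calO_{Z_1})$, so $\delta$ is injective iff $h^0(Z_1,\calL_{gen}^{im})=h^0(\calO_{Z_1})$, whereas the equality $h^0(Z_1,\calL_{gen}^{im})-h^0(\calO_{Z_1})=(Z_1,l')$ you assert would say that $\delta$ is the \emph{zero} map. The correct condition states that the fibre of $c^{l'}(Z_1)$ through $D$ is a single point, which is what links (III)(i) to (II)(i). Second, the principle you invoke to upgrade ``generically finite'' to ``birational onto the image'' --- that a dominant quasi-finite map onto a space of the same dimension is birational --- is false (any cover of degree $\ge 2$ is a counterexample), and injectivity of the tangent map at a generic point is automatic in characteristic zero for any generically finite map, so it cannot supply degree one either. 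The ingredient that actually closes this gap (used implicitly by the paper via \cite{NNI}) is that every fibre of the Abel map is irreducible, being an open subset of the projectivized space of sections $\P(H^0(Z_1,\calL))$; hence a zero-dimensional generic fibre is a single reduced point. You correctly identified this passage as the main obstacle, but neither of the two arguments you offer for it works. (Two smaller points: the map in (II)(i) must be read as $\eca^{l'}(Z_1)\to H^1(\calO_{Z_1})$ --- as in the paper's own proof of the Structure Theorem --- since the map from $\eca^{l'}(Z)$ has fibres of dimension at least $(l',Z-Z_1)$ and so is generally not birational onto its image; and your concern about the quantifier ``arbitrary $s$'' in (III) disappears because once $h^0(Z_1,\calL_{gen}^{im})=h^0(\calO_{Z_1})$ all sections in $H^0(Z_1,\calL_{gen}^{im})_{reg}$ determine the same divisor.)
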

\begin{proof}
For (I)$\Rightarrow$(II) use the following.
First recall that $\dim \eca^{l'}(Z')=(l',Z')$
for any effective cycle $Z'$. Next, from  (\ref{eq:form3}),
 there exists an effective cycle $Z_1 \leq Z$, such that
$\dim \im(c^{l'}(Z))=  (l', Z_1) + h^1(\calO_Z) - h^1(\calO_{Z_1})$.
But  $\dim(\im(c^{l'}(Z_1))) \leq \dim \eca ^{l'}(Z_1)=(l', Z_1)$ (cf. \ref{ss:AbelMap})
and $\dim( \im(c^{l'}(Z))) - \dim( \im(c^{l'}(Z_1))) \leq h^1(\calO_Z) - h^1(\calO_{Z_1})$.
Hence, necessarily  we have equalities in both these inequalities.
(I)$\Leftarrow$(II) is similar.

For (II)(i)$\Leftrightarrow$(III)(i) use the fact that $\delta$ is the tangent application
$T_D\im c^{l'}(Z_1)$ at $D$, cf. \cite[3.2]{NNI}, and for (II)(ii)$\Leftrightarrow$(III)(ii)
use  Remark \ref{rem:rem}(3)(a).
\end{proof}

\subsection{Structure theorem for the Abel map}\label{ss:5.1}

The geometric interpretation from Lemma \ref{lem:MINSETS}(II) has the following consequence.

\begin{theorem}\label{th:structure} {\bf (Structure  theorem)}
Fix a resolution $\tX$, a cycle $Z\geq  E$  and a   Chern class $l'\in -\calS'$
as above.

(a) There exists an effective cycle $Z_1 \leq Z$, such that: {\it (i)} the map
$\eca^{l'}(Z) \to H^1(Z_1)$ is birational onto its image,
and {\it (ii) }the generic fibres of the restriction of $r$,
$r^{im}:\im(c^{l'}(Z)) \to \im(c^{l'}(Z_1))$,  have dimension $h^1(\calO_{Z})-h^1(\calO_{Z_1})$.
(Cf. Lemma \ref{lem:MINSETS}(II).)

(b) In particular, for any such $Z_1$, the space $\im (c^{l'}(Z))$ is birationally
equivalent with an affine fibration with affine fibers of dimension
$h^1(\calO_{Z})-h^1(\calO_{Z_1})$ over $\eca^{l'}(Z_1)$.

(c) The set of effective cycles $Z_1$ with property as in {\it (a)} has a unique
minimal and a unique maximal element
denoted by $C_{min}(Z, l')$ and $C_{max}(Z, l')$.
Furthermore, $C_{min}(Z, l')$ coincides with the cohomology cycle of the pair
$(Z,\calL_{gen}^{im})$ (the unique minimal element of the set $\{0\leq Z_1\leq Z\,:\,
h^1(Z,\calL_{gen}^{im}) =h^1(Z_1,\calL_{gen}^{im})$) for the generic $\calL_{gen}^{im}\in
\im ( c^{l'}(Z))$.
\end{theorem}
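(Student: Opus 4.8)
The whole statement follows by combining Corollary \ref{cor:formula3} with Lemma \ref{lem:MINSETS}, a short dimension count, and some lattice combinatorics for the filtration $\{\calG_l\}_l$; the analytic input has all been supplied by those results. For \textbf{(a)}: by Corollary \ref{cor:formula3} one has $d_Z(l')=\min_{0\le Z_1\le Z}\{(l',Z_1)+h^1(\calO_Z)-h^1(\calO_{Z_1})\}$, and the minimum is attained since the index set is finite, so the set (I) of Lemma \ref{lem:MINSETS} is non-empty; by that lemma (I)$=$(II), and a cycle $Z_1$ realizing the minimum then has properties (i)--(ii), which is exactly (a).

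\textbf{(b)} Fix such a $Z_1$, so $d_Z(l')=(l',Z_1)+h^1(\calO_Z)-h^1(\calO_{Z_1})$. Since every $0\le W\le Z_1$ competes in the minimization problems for both $Z$ and $Z_1$, minimality of $Z_1$ for $Z$ forces $(l',W)-h^1(\calO_W)\ge (l',Z_1)-h^1(\calO_{Z_1})$ for all such $W$; feeding this into (\ref{eq:form3}) for the pair $(Z_1,l')$ gives $d_{Z_1}(l')=(l',Z_1)=\dim\eca^{l'}(Z_1)$. Hence $c^{l'}(Z_1):\eca^{l'}(Z_1)\to\im(c^{l'}(Z_1))$ is a dominant morphism of irreducible varieties of equal dimension with irreducible fibres $\bP H^0(Z_1,\cdot)_{reg}$, hence birational. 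On the other hand, restricting to $Z_1$ a section without fixed components again produces such a section, so the natural linear surjection $r:H^1(\calO_Z)\twoheadrightarrow H^1(\calO_{Z_1})$ maps $\im(c^{l'}(Z))$ into $\im(c^{l'}(Z_1))$, i.e. $\im(c^{l'}(Z))\subseteq\calF:=r^{-1}(\im(c^{l'}(Z_1)))$. After choosing a splitting of $r$, $\calF\cong\im(c^{l'}(Z_1))\times \bC^{h^1(\calO_Z)-h^1(\calO_{Z_1})}$ is a (trivial) affine bundle over $\im(c^{l'}(Z_1))$, irreducible of dimension $(l',Z_1)+h^1(\calO_Z)-h^1(\calO_{Z_1})=d_Z(l')=\dim\im(c^{l'}(Z))$; being constructible and of full dimension inside the irreducible $\calF$, $\im(c^{l'}(Z))$ is dense in $\calF$, hence birationally equivalent to it. Composing $\calF\to\im(c^{l'}(Z_1))$ with the birational inverse of $c^{l'}(Z_1)$ yields the asserted affine fibration over $\eca^{l'}(Z_1)$, and the generic-fibre statement in (a)(ii) is this same count read backwards.

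\textbf{(c)} By the codimension form (\ref{eq:codim}) of Corollary \ref{cor:formula3} the cycles of (a) are exactly the maximizers of $\phi(W):=h^1(\calO_W)-(l',W)$ on $\{0\le W\le Z\}$. Using $h^1(\calO_W)=\dim\calG_W$, $\calG_{Z_1}\cap\calG_{Z_2}=\calG_{\min\{Z_1,Z_2\}}$ and $\calG_{Z_1}+\calG_{Z_2}\subseteq\calG_{\max\{Z_1,Z_2\}}$ from Definition \ref{bek:filtrforms}, together with additivity of $(l',\cdot)$, one obtains
\[ \phi(Z_1)+\phi(Z_2)\le\phi(\max\{Z_1,Z_2\})+\phi(\min\{Z_1,Z_2\}), \]
so if $Z_1,Z_2$ are maximizers then so are $\min\{Z_1,Z_2\}$ and $\max\{Z_1,Z_2\}$; the (finite, non-empty) set of (a) is therefore closed under $\min$ and $\max$, hence has a unique minimal element $C_{min}(Z,l')$ and a unique maximal element $C_{max}(Z,l')$. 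For the last claim I would use characterization (III) of Lemma \ref{lem:MINSETS}: every cycle in the set satisfies (III)(ii), i.e. $h^1(Z_1,\calL_{gen}^{im})=h^1(Z,\calL_{gen}^{im})$, so the set of (a) is contained in $\{0\le Z_1\le Z:\,h^1(Z_1,\calL_{gen}^{im})=h^1(Z,\calL_{gen}^{im})\}$, whose least element is by definition the cohomology cycle $C$ of $(Z,\calL_{gen}^{im})$; this gives $C\le C_{min}(Z,l')$. Conversely $C$ itself lies in the set of (a): it satisfies (III)(ii) tautologically, and (III)(i) — injectivity of $\delta$, equivalently $h^1(C,\calL_{gen}^{im})=h^1(\calO_C)-(l',C)$ — holds because, $\calL_{gen}^{im}$ being generic in $\im(c^{l'}(Z))$ and $C$ being its cohomology cycle, the forms computing $H^1(Z,\calL_{gen}^{im})^*$ (those with regular residue along a lift of a generic divisor, cf. Theorem \ref{th:Formsres}) already have pole bounded by $C$. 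Hence $C_{min}(Z,l')\le C$, and the two cycles agree.

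\textbf{Main obstacle.} Steps (a) and (b) are bookkeeping on top of Lemma \ref{lem:MINSETS}, and the sublattice part of (c) is formal. The genuinely delicate point is the final identification in (c): verifying that the cohomology cycle of a \emph{generic} bundle $\calL_{gen}^{im}\in\im(c^{l'}(Z))$ satisfies condition (III)(i) — equivalently, that for such a bundle the ``obstruction forms'' have pole exactly along its cohomology cycle. This is where the genericity of $\calL_{gen}^{im}$ (its base-point behaviour and the fact that it minimizes $h^1$) must be exploited in an essential way, through the form-theoretic description of the tangent image of the Abel map in Theorem \ref{th:Formsres} together with Laufer duality.
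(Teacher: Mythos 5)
Your parts (a) and (b) are sound and essentially follow the paper's route: the paper reduces (a) to Lemma \ref{lem:MINSETS} and treats (b) as immediate, and your density argument for $\im(c^{l'}(Z))\subseteq r^{-1}(\im(c^{l'}(Z_1)))$ together with the count $d_{Z_1}(l')=(l',Z_1)$ is a correct fleshing-out (the only caveat being that applying Corollary \ref{cor:formula3} to $Z_1$ with possibly smaller support requires restricting $l'$, as in Remark \ref{rem:restrict}). Your submodularity argument for the first half of (c) is genuinely different from the paper's: the paper proves closure under $\max$ geometrically, by factoring $\eca^{l'}(\max\{Z_1,Z_2\})\to H^1(Z_1)\times H^1(Z_2)$ through divisors supported away from the singular points of $E$, and gets the minimal element only through the cohomology cycle; your inequality $\phi(Z_1)+\phi(Z_2)\le\phi(\max)+\phi(\min)$, coming from $\dim(\calG_{Z_1}+\calG_{Z_2})+\dim(\calG_{Z_1}\cap\calG_{Z_2})=\dim\calG_{Z_1}+\dim\calG_{Z_2}$ together with $\calG_{Z_1}\cap\calG_{Z_2}=\calG_{\min\{Z_1,Z_2\}}$, gives closure under both $\min$ and $\max$ at once. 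That part is correct and arguably cleaner than the paper's.

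The genuine gap is exactly where you flag it, and the justification you offer does not close it. Writing $C$ for the cohomology cycle of $(Z,\calL_{gen}^{im})$, the fact that the forms in $\Omega_Z(D)\simeq H^1(Z,\calL_{gen}^{im})^*$ have pole bounded by $C$ says only that $\Omega_Z(D)\subseteq\calG_C$, i.e. $h^1(C,\calL_{gen}^{im})=h^1(Z,\calL_{gen}^{im})$, which is condition (III)(ii) and, as you note, tautological. Condition (III)(i) is the equality $\dim\calG_C-\dim\Omega_Z(D)=(l',C)$, i.e. maximal rank of $\delta$ at level $C$; pole-boundedness only yields the inequality $\dim\calG_C-\dim\Omega_Z(D)\le(l',C)$, and the reverse inequality is precisely the assertion that $C$ maximizes $\phi$ — which is what you are trying to prove, so the argument as written is circular. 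The paper closes this by a bootstrapping step: it applies the already-established existence of $C_{max}(\cdot,l')$ to the smaller ambient cycle $Z^{ii}_{min}=C$ itself, deduces $C_{max}(C,l')=C$, and concludes that $C$ satisfies (a)(i) intrinsically. In your numerical language the same step reads: apply (\ref{eq:codim}) and Remark \ref{rem:rem}(3)(a) to the cycle $C$, using that $\calL_{gen}^{im}|_C$ is generic in $\im(c^{l'}(C))$, to get $\max_{0\le W\le C}\phi(W)=h^1(C,\calL_{gen}^{im})=h^1(Z,\calL_{gen}^{im})=\max_{0\le W\le Z}\phi(W)$; hence some global maximizer $W_0$ satisfies $W_0\le C$, so $C_{min}\le W_0\le C\le C_{min}$ and all three coincide. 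Without some such argument the identification $C_{min}(Z,l')=C$ is not established.
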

\begin{proof}  {\it (a)} Use Lemma \ref{lem:MINSETS}.

{\it (c)} Assume that two cycles $Z_1$ and $Z_2$ satisfy {\it (a)}. We claim that
 $Z' := \max\{Z_1, Z_2\}$  satisfies too.

 First, for any cycle $Z''$ with $Z_1\leq Z''\leq Z$, if $Z_1$ satisfies {\it (a)(ii)}
 %the generic fibres of the map $\im(c^{l'}(Z)) \to \im(c^{l'}(Z_1))$
  %has dimension $h^1(\calO_{Z}) - h^1(\calO_{Z_1})$
 then $Z''$ satisfies too.
%the generic fibres of the map $\im(c^{l'}(Z)) \to \im(c^{l'}(Z''))$ has dimension
%$h^1(\calO_{Z}) - h^1(\calO_{Z''})$.
This applies for $Z'$ too. To prove {\it (a)(i)} for $Z'$,
let us  denote by $\eca^{l'}(Z'')_0 \subset \eca^{l'}(Z'')$ the set of divisors whose
support is disjoint from the singular  points of $E$.
If $l'=\sum_va_vE_v^*$ then $\eca^{l'}(Z)_0=\prod _v \eca^{a_vE^*_v}(Z)_0$.
 Using this fact one shows
that the product
$\eca^{l'}(Z')\to \eca^{l'}(Z_1)\times \eca^{l'}(Z_2)$
of the two restrictions  $ \eca^{l'}(Z') \to \eca^{l'}(Z_j)$ ($j=1,2$)
 is birational onto its image (BioIm).
This composed with the product of the maps  $\eca^{l'}(Z_1) \to H^1(Z_1)$ and $\eca^{l'}(Z_2) \to H^1(Z_2)$
(both BioIm) guarantees  that  $\eca^{l'}(Z') \to  H^1(Z_1)\times H^1(Z_2)$ is BioIm too.
This map writes as the composition
 $\eca^{l'}(Z') \to H^1(Z') \to H^1(Z_1)\times H^1(Z_2)$, hence the first term
 $\eca^{l'}(Z') \to H^1(Z')$   should be  BioIm. Hence the claim and
the existence of $C_{max}(Z, l')$ follows.

%Now let's have a generic divisor $p \in \eca^{l'}(Z')_0$ and an arbitrary divisor
%$q \in \eca^{l'}(Z')_0$, where $q \neq p$, we want to prove $c^{l'}(Z')(p) \neq c^{l'}(Z')(q)$.
%
%Let's have the two restriction maps $r_1 : \eca^{l'}(Z') \to \eca^{l'}(Z_1)$
%and $r_2 : \eca^{l'}(Z') \to \eca^{l'}(Z_2)$.
%
%Now for every divisor $q \in \eca^{l'}(Z')_0$ we have $r_1(p) \neq r_1 (q)$ or
%$r_2(p) \neq r_2(q)$, and now the claim follows from the fact that $r_1(p) \in \eca^{l'}(Z_1)_0$ and
%$r_2(p) \in \eca^{l'}(Z_2)_0$ are generic divisors and the maps  $c^{l'}(Z_1) :\eca^{l'}(Z_1)
% \to H^1(Z_1)$ and $c^{l'}(Z_2) : \eca^{l'}(Z_2) \to H^1(Z_2)$ are birational.
%This proves the existence of the cycle $C_{max}(Z, l')$.

In order to prove the existence of
 $C_{min}(Z, l')$, first we claim that the set of cycles $Z^{ii}$,
which satisfy   {\it (a)(ii)} has a unique minimal element $Z^{ii}_{min}$.
This fact via  Remark \ref{rem:rem}(3)(a) is equivalent with the existence of the (unique)
cohomological cycle for the pair $(Z,\calL_{gen}^{im})$. This was proved in \cite[5.5]{NNI}, see also
\cite[4.8]{MR}.
%  % such that the generic fibres of the map $\im(c^{l'}(Z)) \to \im(c^{l'}(Z'))$ have dimension
%   %$h^1(\calO_{Z}) - h^1(\calO_{Z'})$ has a unique minimal element.
%Indeed assume, that the cycles $Z_1$ and $Z_2$ satisfy  {\it (a)(ii)},
%and write $Z':=\min\{Z_1,Z_2\}$.
%Then $\ker(r_{Z'})=\ker(r_{Z_1})+\ker(r_{Z_2})$ ($\dag$)  (this follows e.g. from the duality
%\ref{bek:filtrforms}, since $\calG_{Z_1}\cap\calG_{Z_2}=\calG_{Z'}$, which is dual with ($\dag$)).
%\marginpar{duality notations!!!}
%Note also that $\eca^{l'}(Z)\to \eca^{l'}(Z_i)$ ($i=1,2$)
%is onto (cf. \cite[Th. 3.1.10]{NNI}), hence
%the restriction of $r_{Z_i}$, $\im c^{l'}(Z)\to \im c^{l'}(Z_i)$ is onto too.
%Therefore, by the second paragraph of \ref{ss:5.1}, $\ker(r_{Z_i})+\im c^{l'}(Z)\subset
%\overline {\im c^{l'}(Z)}$ for $i=1,2$, hence
%by $(\dag)$ $\ker(r_{Z'})+\im c^{l'}(Z)\subset
%\overline {\im c^{l'}(Z)}$ too, hence $Z' $ satisfies  {\it (a)(ii)}.
%In particular, the existence of $Z^{ii}_{min}$ follows.
Next, we claim that
the map $\eca^{l'}(Z^{ii}_{min}) \to H^1(Z^{ii}_{min})$ is BioIm as well.
From the existence of the cycle $C_{max}(\cdot, l')$  (already proved above), applied for
$Z^{ii}_{min}$, there exists a cycle $C_{max}(Z^{ii}_{min}, l')\leq Z^{ii}_{min}$,
which satisfies {\it (a)}. In particular, {\it (a)(ii)} is valid for the pair
$C_{max}(Z^{ii}_{min}, l')\leq Z^{ii}_{min}$. By the definition of
$Z^{ii}_{min}$ the condition  {\it (a)(ii)} is valid for the pair
$Z^{ii}_{min}\leq Z$ too. Hence,  {\it (a)(ii)} is valid for the pair
$C_{max}(Z^{ii}_{min}, l')\leq Z$ as well.
Therefore, by
the definition of $Z^{ii}_{min}$ necessarily $C_{max}(Z^{ii}_{min}, l')= Z^{ii}_{min}$,
hence $Z^{ii}_{min}$ satisfies {\it (a)}.
\end{proof}

\section{Example. The case of generic analytic structure}\label{ss:GEN}
\subsection{}
Let us fix the topological type of a good resolution of a normal surface singularity, and we assume
that the analytic type on $\tX$ is generic (in the sense of \cite{NNII}, see \cite{LaDef1} as well).
Recall that in such a situation, if $Z'=\sum n_vE_v$ is a non--zero  effective cycle, whose support
$|Z'|=\cup_{n_v\not=0}E_v$ is connected, then by \cite[Corollary 6.1.7]{NNII} one has
$$h^1(\calO_{Z'})=1-\min _{|Z'|\leq l\leq Z',\ l\in L}\ \{\chi(l)\}.$$
\begin{cor} \label{cor:GEN}
Assume that $\tX$ has a generic analytic type, $Z \geq E$ an integral  cycle and  $l' \in -S'$.
For any $0\leq Z_1\leq Z$  write $E_{|Z_1|}$ for $\sum _{E_v\subset |Z_1|}E_v$. Then
\begin{equation}\label{eq:gen}
d_{Z}(l') = 1  -
 \min_{E \leq l \leq Z}\ \{ \chi(l)\} +  \min_{0\leq Z_1\leq Z}\big\{ \, (l', Z_1)  +  \min_{E_{|Z_1|} \leq l \leq Z_1 }\ \{ \chi(l)\}
  - \chi(E_{|Z_1|})
 \, \big\}.
\end{equation}
In particular, $d_{Z}(l')=\dim (\im c^{l'}(Z)) $ is topological.
\end{cor}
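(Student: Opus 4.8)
The plan is to combine Corollary \ref{cor:formula3}, which for an arbitrary analytic structure gives
$$d_Z(l')=\min_{0\leq Z_1\leq Z}\{(l',Z_1)+h^1(\calO_Z)-h^1(\calO_{Z_1})\},$$
with the combinatorial description of the cohomology of structure sheaves in the generic case, and then simply substitute. The one preparatory step is to promote the connected--support formula $h^1(\calO_{W})=1-\min_{|W|\leq l\leq W}\{\chi(l)\}$ of \cite[Corollary 6.1.7]{NNII} to the form
$$h^1(\calO_{W})=\chi(E_{|W|})-\min_{E_{|W|}\leq l\leq W}\{\chi(l)\},$$
valid for an \emph{arbitrary} effective cycle $0\leq W\leq Z$.

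For this promotion I would first observe that $\chi(E_\Gamma)=1$ for any connected reduced exceptional cycle $E_\Gamma$ on our tree of rational curves: expanding $\chi(E_\Gamma)=\sum_{v\in\Gamma}\chi(E_v)-\sum_{\text{edges of }\Gamma}(E_v,E_w)$, each $\chi(E_v)=1$ since $E_v$ is rational (this is exactly the adjunction defining $Z_K$), and a connected subtree on $n$ vertices has $n-1$ edges, so $\chi(E_\Gamma)=n-(n-1)=1$. Next, for a general $W$ I would write $W=\sum_j W_j$ according to the connected components $\Gamma_j$ of $|W|$; then $\calO_{W}=\bigoplus_j\calO_{W_j}$, so $h^1(\calO_{W})=\sum_j h^1(\calO_{W_j})$, while, the components $\Gamma_j$ being pairwise non--adjacent in the tree, $\chi$ is additive on cycles supported on distinct components, so $\chi(E_{|W|})=\sum_j\chi(E_{\Gamma_j})=\sum_j 1$ and the constraint $E_{|W|}\leq l\leq W$ splits componentwise, whence $\min_{E_{|W|}\leq l\leq W}\{\chi(l)\}=\sum_j\min_{E_{\Gamma_j}\leq l\leq W_j}\{\chi(l)\}$. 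Applying \cite[Corollary 6.1.7]{NNII} to each $W_j$ and summing gives the displayed formula; the case $W=0$ is trivial.

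The corollary then follows by substitution. By Corollary \ref{cor:formula3},
$$d_Z(l')=h^1(\calO_Z)+\min_{0\leq Z_1\leq Z}\{(l',Z_1)-h^1(\calO_{Z_1})\},$$
and inserting $h^1(\calO_Z)=1-\min_{E\leq l\leq Z}\{\chi(l)\}$ (valid since $|Z|=E$ is connected) together with $h^1(\calO_{Z_1})=\chi(E_{|Z_1|})-\min_{E_{|Z_1|}\leq l\leq Z_1}\{\chi(l)\}$ produces exactly \eqref{eq:gen}. For the final assertion, every ingredient of the right--hand side --- the Riemann--Roch form $\chi$, the intersection numbers $(l',Z_1)$, the reduced cycles $E_{|Z_1|}$, and the partial order on $L'$ --- depends only on the lattice $L$ with its intersection form and on $Z_K$, i.e. only on the plumbing graph, so $d_Z(l')=\dim(\im c^{l'}(Z))$ is topological. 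I do not anticipate a genuine obstacle: all the arithmetic is already packaged in Corollary \ref{cor:formula3} and \cite[Corollary 6.1.7]{NNII}, and the only points requiring care are the componentwise bookkeeping above and the degenerate case $Z_1=0$ (together with the trivial remark that the inner constraint sets $\{E_{|Z_1|}\leq l\leq Z_1\}$ are nonempty, since $E_{|Z_1|}\leq Z_1$).
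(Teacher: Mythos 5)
Your proposal is correct and follows exactly the route the paper intends: substitute the generic-structure formula $h^1(\calO_{W})=1-\min_{|W|\leq l\leq W}\chi(l)$ of \cite[Cor. 6.1.7]{NNII} into Corollary \ref{cor:formula3}, with the componentwise bookkeeping (using $\chi(E_\Gamma)=1$ for each connected reduced subtree of rational curves, so that $\chi(E_{|Z_1|})$ counts components) handling disconnected supports. The paper leaves this substitution implicit; your write-up supplies precisely the omitted details and nothing essentially different.
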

Let us  concentrate again on the {\it codimension} $h^1(\calO_Z)-d_Z(l')$
of $\im (c^{l'}(Z))\subset \pic^{l'}(Z)$ instead of the
dimension. Then, (\ref{eq:gen}) reads as
\begin{equation}\label{eq:gen2}
{\rm codim}\, \im( c^{l'}(Z))=  \max_{0\leq Z_1\leq Z}\big\{ \, -(l', Z_1)
- \min_{E_{|Z_1|} \leq l \leq Z_1 }\ \{ \chi(l)\}
 +  \chi(E_{|Z_1|})
 \, \big\}.
\end{equation}
This is a rather complicated combinatorial expression in terms of the intersection lattice $L$. The next lemma aims to simplify it.
\begin{proposition}\label{lem:minsimpl} Consider the assumptions of  Corollary \ref{cor:GEN}.
Let $Z_1$ be minimal such that the maximum in (\ref{eq:gen2}) is realized for it. Then
$\min_{E_{|Z_1|} \leq l \leq Z_1 }\ \{ \chi(l)\}=\chi(Z_1)$. In particular,
\begin{equation}\label{eq:gen3}
{\rm codim}\, \im (c^{l'}(Z))=  \max_{0\leq Z_1\leq Z}\big\{ \, -(l', Z_1)
-  \chi(Z_1) +  \chi(E_{|Z_1|}) \, \big\}.
\end{equation}
The maximum at the right hand side is realized e.g. for the cohomology cycle of
$\calL^{im}_{gen}\in \im (c^{l'}(Z))\subset \pic^{l'}(Z)$.
Furthermore,
\begin{equation}\label{eq:gen4}
h^1(Z, \calL)\geq  \max_{0\leq Z_1\leq Z}\big\{ \, -(l', Z_1)
-  \chi(Z_1) +  \chi(E_{|Z_1|}) \, \big\}
\end{equation}
for any $\calL\in \im (c^{l'}(Z))$ and equality holds for generic $\calL_{gen}^{im}\in
 \im (c^{l'}(Z))$.
\end{proposition}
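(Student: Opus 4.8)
The plan is to deduce everything from one combinatorial observation about the \emph{minimal} cycle realizing the maximum in (\ref{eq:gen2}). Write, for $0\le Z_1\le Z$, $\langle Z_1\rangle:=-(l',Z_1)-\min_{E_{|Z_1|}\le l\le Z_1}\{\chi(l)\}+\chi(E_{|Z_1|})$ for the bracket appearing in (\ref{eq:gen2}), so that ${\rm codim}\,\im(c^{l'}(Z))=\max_{Z_1}\langle Z_1\rangle$ by (\ref{eq:gen2}). First I would fix a minimal $Z_1$ (with respect to $\le$) having $\langle Z_1\rangle={\rm codim}\,\im(c^{l'}(Z))$; the asserted equality $\min_{E_{|Z_1|}\le l\le Z_1}\{\chi(l)\}=\chi(Z_1)$ is vacuous for $Z_1=0$, so I may assume $Z_1>0$. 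Choose $l_0$ in $[E_{|Z_1|},Z_1]$ attaining the inner minimum. The key point is that $l_0$ is again a maximizer: since $E_{|Z_1|}\le l_0\le Z_1$ we have $|l_0|=|Z_1|$, hence $E_{|l_0|}=E_{|Z_1|}$ and $[E_{|l_0|},l_0]\subseteq[E_{|Z_1|},Z_1]$, and this smaller interval still contains the minimizer $l_0$, so $\min_{E_{|l_0|}\le l\le l_0}\{\chi(l)\}=\chi(l_0)=\min_{E_{|Z_1|}\le l\le Z_1}\{\chi(l)\}$; moreover $-l'\in\calS'$ is antinef and $Z_1-l_0\ge0$, so $-(l',l_0)\ge-(l',Z_1)$. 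Adding these gives $\langle l_0\rangle\ge\langle Z_1\rangle={\rm codim}\,\im(c^{l'}(Z))$, so $l_0$ is a maximizer with $l_0\le Z_1$, and minimality of $Z_1$ forces $l_0=Z_1$; thus $\min_{E_{|Z_1|}\le l\le Z_1}\{\chi(l)\}=\chi(Z_1)$, which is the first assertion.

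Next I would derive (\ref{eq:gen3}). For every $0\le Z_1\le Z$ one has $\chi(Z_1)\ge\min_{E_{|Z_1|}\le l\le Z_1}\{\chi(l)\}$, since the endpoint $Z_1$ lies in the interval; hence the bracket of (\ref{eq:gen3}) at $Z_1$ is $\le\langle Z_1\rangle$, and taking the maximum over $Z_1$ shows that the right-hand side of (\ref{eq:gen3}) is $\le{\rm codim}\,\im(c^{l'}(Z))$. Conversely, evaluating the bracket of (\ref{eq:gen3}) at the minimal maximizer $Z_1$ found above and using the first assertion, it equals $\langle Z_1\rangle={\rm codim}\,\im(c^{l'}(Z))$, so the right-hand side of (\ref{eq:gen3}) is also $\ge{\rm codim}\,\im(c^{l'}(Z))$. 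This gives (\ref{eq:gen3}).

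To see that the maximum in (\ref{eq:gen3}) is attained at the cohomology cycle of $\calL^{im}_{gen}$, I would use the generic identity $h^1(\calO_{Z_1})=\chi(E_{|Z_1|})-\min_{E_{|Z_1|}\le l\le Z_1}\{\chi(l)\}$, valid for all $0\le Z_1\le Z$ (this is precisely what converts (\ref{eq:form3})/(\ref{eq:codim}) into Corollary \ref{cor:GEN}; for a disconnected $|Z_1|$ it follows from \cite[Cor.\ 6.1.7]{NNII} by summing over connected components, using additivity of $\chi$ over disjoint supports and $\chi(E_I)=\#\{\text{connected components of }I\}$). Thus the brackets of (\ref{eq:codim}) and of (\ref{eq:gen2}) coincide termwise, so the two maxima have the same set of maximizers, whose minimal element is, by Lemma \ref{lem:MINSETS} and Theorem \ref{th:structure}(c), exactly the cohomology cycle $C$ of $(Z,\calL^{im}_{gen})$; applying the first assertion to $C$ shows that $\langle C\rangle$ equals the bracket of (\ref{eq:gen3}) at $C$, so $C$ realizes the maximum in (\ref{eq:gen3}). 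Finally, (\ref{eq:gen4}) follows by combining (\ref{eq:gen3}) with the general inequality $h^1(Z,\calL)\ge{\rm codim}\,\im(c^{l'}(Z))$ for $\calL\in\im(c^{l'}(Z))$, with equality for generic $\calL^{im}_{gen}$, from Remark \ref{rem:rem}(3)(a) (equivalently Theorem \ref{prop:AZ}(d)).

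I expect the only real obstacle to be the first paragraph: one must see that antinefness of $-l'$ lets one slide a maximizer $Z_1$ down to the cycle $l_0$ where the inner $\chi$-minimum is attained without decreasing $\langle\cdot\rangle$. Everything after that is formal manipulation of the two equivalent formulas for ${\rm codim}\,\im(c^{l'}(Z))$ together with the structure theory already established; the only mildly delicate point there is the disconnected-support bookkeeping in the generic formula for $h^1(\calO_{Z_1})$ used in the third paragraph.
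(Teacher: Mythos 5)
Your proposal is correct and follows essentially the same route as the paper: the paper's proof also takes the cycle $l_1$ realizing the inner minimum, uses $|l_1|=|Z_1|$, the antinefness of $-l'$, and the equality of the two minima to show the bracket does not decrease when passing from $Z_1$ to $l_1$, forces $l_1=Z_1$ by minimality, and then derives (\ref{eq:gen3}) by the same two-sided comparison of maxima, with (\ref{eq:gen4}) coming from Remark \ref{rem:rem}(3)(a). Your third paragraph merely fills in the (unstated in the paper's proof) justification that the cohomology cycle realizes the maximum, which is consistent with Lemma \ref{lem:MINSETS} and Theorem \ref{th:structure}(c) as you indicate.
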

\begin{proof} Assume that the minimum $\min_{E_{|Z_1|} \leq l \leq Z_1 }\ \{ \chi(l)\}=\chi(Z_1)$
is realized by some $l_1$. Then $(l',Z_1)\geq (l',l_1)$ (since $l' \in -\calS'$),
$ \min_{E_{|Z_1|} \leq l \leq Z_1 }\{\chi(l)\}=\min_{E_{|l_1|} \leq l \leq l_1 }\{\chi(l)\}$
 and
$\chi(E_{|Z_1|})=\chi(E_{|l_1|})$ hence  $-(l', Z_1)
-  \min_{E_{|Z_1|} \leq l \leq Z_1 }\ \{ \chi(l)\}  +  \chi(E_{|Z_1|})\leq
-(l',l_1)-  \min_{E_{|l_1|} \leq l \leq l_1 }\ \{ \chi(l)\} +\chi(E_{|l_1|})$.
Since the maximality in (\ref{eq:gen2}) is realized by $Z_1$,  which is minimal with this property,
necessarily $Z_1=l_1$. Next,
%$$-(l', Z_1)- \min_{E_{|Z_1|} \leq l \leq Z_1 }\ \{ \chi(l)\}
% +  \chi(E_{|Z_1|})\geq -(l', Z_1) - \chi(Z_1) +  \chi(E_{|Z_1|}),$$hence
$$
 \max_{0\leq Z_1\leq Z}\big\{-(l', Z_1)- \min_{E_{|Z_1|} \leq l \leq Z_1 }\ \{ \chi(l)\}
 +  \chi(E_{|Z_1|})\}\geq  \max_{0\leq Z_1\leq Z}\big\{-(l', Z_1)
- \chi(Z_1) +  \chi(E_{|Z_1|})\}.$$
But the maximum at the left hand side is realized by a term from the right.

For the last statement use again  Remark \ref{rem:rem}(3)(a).
\end{proof}

\subsection{} The identity (\ref{eq:gen3}),
valid for a generic analytic structure of $\tX$, extends to an optimal inequality
valid for {\it any analytic structure}.

\begin{theorem}\label{th:tz}
Consider an {\em arbitrary}  normal surface singularity $(X,o)$, its resolution $\tX$, $Z\geq E$ and
$l'\in -\calS'$. Then
${\rm codim}\, \im (c^{l'}(Z))=h^1(Z, \calL^{im}_{gen})$ (cf. Remark \ref{rem:rem}(3)(a))
satisfies
\begin{equation}\label{eq:gen5}
{\rm codim}\, \im (c^{l'}(Z))\geq  \max_{0\leq Z_1\leq Z}\big\{ \, -(l', Z_1)
-  \chi(Z_1) +  \chi(E_{|Z_1|}) \, \big\}.
\end{equation}
In particular, for any $\calL\in \im (c^{l'}(Z))$ one also has (everything computed in $\tX$)
\begin{equation}\label{eq:gen5b}
h^1(Z,\calL)\geq h^1(Z,\calL_{gen}^{im})={\rm codim}\, \im (c^{l'}(Z))\geq  \max_{0\leq Z_1\leq Z}\big\{ \, -(l', Z_1)
-  \chi(Z_1) +  \chi(E_{|Z_1|}) \, \big\}.
\end{equation}
\end{theorem}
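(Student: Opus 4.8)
The plan is to read Theorem \ref{th:tz} off from the closed formula of Corollary \ref{cor:formula3}, in the form recorded in (\ref{eq:codim}):
\[
{\rm codim}\, \im (c^{l'}(Z))=\max_{0\le Z_1\le Z}\{\,h^1(\calO_{Z_1})-(l',Z_1)\,\},
\]
which already holds for an arbitrary analytic structure. First I would reduce (\ref{eq:gen5}) to a term-by-term comparison: it suffices to establish, for every $0\le Z_1\le Z$, the purely topological per-cycle estimate $-\chi(Z_1)+\chi(E_{|Z_1|})\le h^1(\calO_{Z_1})$, since then subtracting $(l',Z_1)$ and passing to the maximum over $0\le Z_1\le Z$ gives (\ref{eq:gen5}) at once (choose $Z_1$ realizing the topological maximum on the right).

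For the per-cycle estimate I would translate the two $\chi$'s into sheaf Euler characteristics. As $(Z_K,E_v)=-(K_{\tX},E_v)$ for all $v$, Riemann--Roch on $\tX$ applied to $0\to\calO_{\tX}(-Z_1)\to\calO_{\tX}\to\calO_{Z_1}\to 0$ gives $\chi(Z_1)=h^0(\calO_{Z_1})-h^1(\calO_{Z_1})$, and likewise $\chi(E_{|Z_1|})=h^0(\calO_{E_{|Z_1|}})-h^1(\calO_{E_{|Z_1|}})$. Now the reduced curve $E_{|Z_1|}$ is a disjoint union of trees of rational curves, so its arithmetic genus vanishes, i.e.\ $h^1(\calO_{E_{|Z_1|}})=0$, while $h^0(\calO_{E_{|Z_1|}})$ equals the number of connected components of $|Z_1|$. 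Since the constant functions on the connected components of $Z_1$ are linearly independent in $H^0(\calO_{Z_1})$, one has $h^0(\calO_{Z_1})\ge h^0(\calO_{E_{|Z_1|}})$, and therefore
\[
-\chi(Z_1)+\chi(E_{|Z_1|})=h^1(\calO_{Z_1})-\big(h^0(\calO_{Z_1})-h^0(\calO_{E_{|Z_1|}})\big)\le h^1(\calO_{Z_1}),
\]
as required; the deficit $h^0(\calO_{Z_1})-h^0(\calO_{E_{|Z_1|}})$ is precisely what allows (\ref{eq:gen5}) to be strict for a non-generic analytic structure.

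The remaining assertion (\ref{eq:gen5b}) I would then obtain by chaining facts already available: $h^1(Z,\calL)\ge {\rm codim}\, \im (c^{l'}(Z))=h^1(Z,\calL^{im}_{gen})$ for every $\calL\in\im (c^{l'}(Z))$ (Remark \ref{rem:rem}(3)(a), equivalently Theorem \ref{prop:AZ}(d)), combined with the bound (\ref{eq:gen5}) just proved. I do not anticipate a genuine obstacle here: the analytic content has already been invested in Corollary \ref{cor:formula3}, and what is left is the elementary cohomological bookkeeping above, the only delicate points being the sign normalization identifying $\chi(\,\cdot\,)$ with $\chi(\calO_{\,\cdot\,})$ on $L$ and the vanishing $h^1(\calO_{E_{|Z_1|}})=0$ with the accompanying count of connected components. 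It is worth recording, as the surrounding discussion does, that by Proposition \ref{lem:minsimpl} the topological right-hand side of (\ref{eq:gen5}) is exactly ${\rm codim}\, \im (c^{l'}(Z))$ computed for the generic analytic structure, so Theorem \ref{th:tz} is the semicontinuity statement ${\rm codim}\,(\text{arbitrary})\ge {\rm codim}\,(\text{generic})$, here obtained without invoking deformation theory.
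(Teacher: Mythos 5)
Your proof is correct, but it follows a genuinely different route from the paper's. The paper deduces (\ref{eq:gen5}) from (\ref{eq:codim}) by invoking the semicontinuity of each $h^1(\calO_{Z_1})$ over the parameter space of analytic structures and then quoting the closed formula (\ref{eq:gen3}) for the \emph{generic} analytic structure (Proposition \ref{lem:minsimpl}), so it leans on the theory of generic structures from \cite{NNII}; the Appendix (Section \ref{s:tz}) gives a second proof via the invariant $t(Z,l')$ and a decomposition of $Z'$ into connected pieces with the dominance defect $D(Z_i',l')$. You instead prove the purely local, per-cycle inequality $-\chi(Z_1)+\chi(E_{|Z_1|})\le h^1(\calO_{Z_1})$ directly from Riemann--Roch, the vanishing $h^1(\calO_{E_{|Z_1|}})=0$ for a forest of rational curves, and the count $h^0(\calO_{Z_1})\ge h^0(\calO_{E_{|Z_1|}})=\#\{\mbox{components}\}$, and then take the maximum against (\ref{eq:codim}). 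This is shorter and self-contained: it uses no deformation-theoretic input and no facts about generic analytic structures, only the formula (\ref{eq:codim}) (which carries all the analytic content) plus elementary cohomological bookkeeping; it also exhibits the explicit deficit $h^0(\calO_{Z_1})-h^0(\calO_{E_{|Z_1|}})$ in each term. What it does \emph{not} give is the sharpness statement — that the right-hand side of (\ref{eq:gen5}) is actually attained by the generic analytic structure — which is exactly what the paper's detour through $\tX_{gen}$ and Proposition \ref{lem:minsimpl} buys, and which justifies reading the theorem as a semicontinuity statement. Your closing remark that the deficit is ``precisely'' what allows strictness is a slight overstatement (strictness of the maximum is not controlled term by term), but it is a side comment and does not affect the argument.
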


Note that the right hand side of (\ref{eq:gen5}) is a sharp  topological lower bound for
${\rm codim}\, \im (c^{l'}(Z))$. The inequality (\ref{eq:gen5}) can also be interpreted as the semi-continuity statement
$${\rm codim}\, \im( c^{l'}(Z))(\mbox{arbitrary analytic structure})\geq
{\rm codim}\, \im( c^{l'}(Z))(\mbox{generic analytic structure}).$$
\begin{proof}
Consider the identity (\ref{eq:codim}) applied for an arbitrary $\tX$ and for the generic
$\tX$, denoted by $\tX_{gen}$. Then, by semi-continuity of $h^1(\calO_{Z_1})$ with respect
to the analytic structure as parameter space (see e.g. \cite[3.6]{NNII}),
for any fixed effective cycle  $Z_1>0$,  $h^1(\calO_{Z_1})$ computed in $\tX$ is
greater than or equal to $h^1(\calO_{Z_1})$ computed in $\tX_{gen}$. Therefore, by
(\ref{eq:codim}) one has ${\rm codim}\, \im (c^{l'}(Z))(\mbox{in} \
\tX)\geq {\rm codim}\, \im (c^{l'}(Z))(\mbox{in} \ \tX_{gen})$.
Then for $\tX_{gen}$ apply  (\ref{eq:gen3}).
\end{proof}
\begin{remark}
Certain upper bounds
 for $\{h^{1}(Z,\calL)\}_{\calL\in \pic^{l'}(Z)}$, valid for any analytic structure,
 were established in
\cite[Prop. 5.7.1]{NNI} (see alo Remark \ref{rem:h1nagy}). However, an optimal upper bound
is not known (see \cite{NO17} for a particular case). Large $h^1$--values are realized by special
strata, whose existence and study is extremely hard.
\end{remark}

\subsection{The cohomology of $\calL_{gen}^{im}(l)$}\label{ss:shifted}
Assume that $Z\geq E$, $l'\in -\calS'$ and let $\calL^{im}_{gen}$ be a generic element
of $\im (c^{l'}(Z))$. If the analytic structure of $(X,o)$ is generic, then by Proposition
\ref{lem:minsimpl}
 $h^1(Z,\calL^{im}_{gen})=t_Z(l')$, where
$t_Z(l')$ is the topological expression from the right hand side of (\ref{eq:gen3}).

 Our goal is to give a topological lower bound for $h^1(Z,\calL)$, where
$\calL:=\calL^{im}_{gen}(l)=\calL^{im}_{gen}\otimes \calO(l)\in \pic^{l'+l}(Z)$ whenever
$l\in L_{>0}$. In this way we will control the generic element of the `new' strata
$\calO(l)\otimes (\im(c^{l'}(Z)))$ of $\pic^{l'+l}(Z)$,
unreachable directly by the previous result. Our hidden goal is to construct in this way
line bundles with `high' $h^1$.

For simplicity we will assume that all the coefficients of $Z$  are sufficiently large
(even compared with $l$, hence the coefficients of $Z-l$ are large as well).
The monomorphism of sheaves
$ \calL^{im}_{gen}|_{Z-l}\hookrightarrow\calL^{im}_{gen}(l)$ gives
%$$0\to \calL^{im}_{gen}|_{Z-l}\to \calL^{im}_{gen}(l)\to \calL^{im}_{gen}(l)|_l\to 0.$$
 $h^0(Z-l, \calL^{im}_{gen})\leq h^0(Z, \calL^{im}_{gen}(l))$, hence
 $$h^1(Z-l, \calL^{im}_{gen})+\chi(Z-l, \calL^{im}_{gen})\leq
 h^1(Z, \calL^{im}_{gen}(l))+\chi(Z, \calL^{im}_{gen}(l)).$$
%For  $Z\gg 0$ and $Z-l\gg 0$ one gets
By a computation regarding $\chi$ this transforms into
 $$h^1(Z, \calL^{im}_{gen}(l))\geq h^1(Z-l, \calL^{im}_{gen})+\chi(-l'-l)-\chi(-l').$$
If  $\tX$ is generic and $Z, Z-l\gg 0$ then  $h^1(Z-l, \calL^{im}_{gen})=t_{Z-l}(l')=
t_Z(l')$, hence
\begin{equation}\label{eq:h1tz}
h^1(Z, \calL^{im}_{gen}(l))\geq t_Z(l')-\chi(-l')+\chi(-l'-l).
\end{equation}
E.g., with the choice $l=-l'\in\calS'\cap L_{>0}$ we get that
$\calL^{im}_{gen}(-l')\in \pic^0 (Z)$ and
\begin{equation}\label{eq:h1tz2}
h^1(Z, \calL^{im}_{gen}(-l'))\geq t_Z(l')-\chi(-l').
\end{equation}
%This can be compared with the inequality $h^1(Z, \calL^{im}_{gen})\geq t_Z(l')$
%(cf. \ref{lem:tzineq}{\it (1)}  and (\ref{eq:INEQ2})).
\begin{remark}\label{rem:h1nagy}
By \cite[Prop. 5.7.1]{NNI} for $Z\gg 0$, $\calL\in\pic(Z)$ with  $c_1(\calL)\in -\calS'$  one has
$h^1(Z, \calL)\leq p_g$ whenever either $H^0 (Z, \calL)=0$ or $\calL \in \im(c^{l'}(Z))$. For other line bundles
a weaker bound is established (see [loc. cit.]), which does not guarantee $h^1(\calL)\leq p_g$.
However, it is not so easy  to find singularities and
bundles with $h^1(\calL)>p_g$ in order to show that such cases indeed might appear.
In the next \ref{ex:tzineq} we provide such an examples (with a recipe to find many others as well) based partly on (\ref{eq:h1tz2}).
%In (\ref{eq:h1tz2}) with a convenient choice of $l'$ one has
%$-\chi(-l')>0$, hence  it gives sharper $h^1$--bound than $h^1(Z, \calL)\geq t_Z(l')$
%(cf. e.g. (\ref{eq:gen5b})).
\end{remark}

\begin{example}\label{ex:tzineq}
Assume that we can construct a nonrational
 resolution graph which satisfies the following (combinatorial)
 properties, valid for certain $Z\gg 0$
and  $l'\in -\calS'\cap L$:
\begin{equation}\label{eq:req}
\begin{split}
(a) \ \ &\ t_Z(l') \geq \chi(-l') -\min_{l\geq 0} \chi(-l'+l) +2, \ \mbox{and }\\
(b) \ \ &-l' \leq   \max \calm, \ \mbox{where $\calm:=\{l\in L_{>0}: \chi(l)=\min \chi\}$}.
\end{split}
\end{equation}
Now, if we consider the generic analytic structure supported on this  topological type, then
$\min_{l\geq 0} \chi(-l'+l)\stackrel{(b)}{=}\min \chi=1-p_g$ (for the second identity
use \cite[Cor. 5.2.1]{NNII}), hence
$t_Z(l')-\chi(-l')\stackrel{(a)}{\geq} -1 +p_g+2=p_g+1$. This combined with
 (\ref{eq:h1tz2}) gives  $h^1(Z, \calL^{im}_{gen}(-l'))>p_g$.

Next we show that (\ref{eq:req}) can be realized.
Consider two copies $\Gamma_1$ and $\Gamma_2$ of the following graph

\begin{picture}(300,45)(30,0)
\put(125,25){\circle*{4}}
\put(150,25){\circle*{4}}
\put(175,25){\circle*{4}}
\put(200,25){\circle*{4}}
\put(225,25){\circle*{4}}
\put(150,5){\circle*{4}}
\put(200,5){\circle*{4}}
\put(125,25){\line(1,0){100}}
\put(150,25){\line(0,-1){20}}
\put(200,25){\line(0,-1){20}}
\put(125,35){\makebox(0,0){$-3$}}
\put(150,35){\makebox(0,0){$-1$}}
\put(175,35){\makebox(0,0){$-13$}}
\put(200,35){\makebox(0,0){$-1$}}
\put(225,35){\makebox(0,0){$-3$}}
\put(160,5){\makebox(0,0){$-2$}}
\put(210,5){\makebox(0,0){$-2$}}
\end{picture}

The wished graph $\Gamma$ consists of $\Gamma_1$, $\Gamma_2$ and a new vertex $v$, which has two adjacent
edges connecting $v$ to the  $(-13)$-vertices of  $\Gamma_1$ and  $\Gamma_2$.
Let the decoration of $v$ be $-b_v$ where $b_v\gg0$. One verifies that
the minimal cycle is $Z_{min}=(b_v-2)E_v^*$, whose $E_v$--multiplicity is 1. We set $-l':= Z_{min}$.
Since $\max\calm\in \calS_{an} \subset \calS'\cap L$ (cf. \cite[5.7]{NNII})
we get that $-l'=Z_{min}\leq \max\calm$.
One verifies that $\chi(Z_{min})=-3$ (e.g. by Laufer's criterion), and also that $\min\chi=-5$
(realized e.g. for $2Z_{min}-E_v$).
Therefore $\chi(-l')-\min _{l\geq 0} \chi(-l'+l)+2= -3+5+2=4$. On the other hand,
the expression (under max) in (\ref{eq:gen3}) for
$Z_1=Z_{min}(\Gamma_1)+Z_{min}(\Gamma_2)$ supported on
 $\Gamma\setminus v$ is 4, hence $t_Z(l')\geq 4$.
\end{example}

\section{Appendix. Geometrical aspects behind the lower bound Theorem \ref{th:tz}
}\label{s:tz}

\subsection{} Let us discuss with more details the geometry behind the inequality
(\ref{eq:gen5}). Along the discussion we will  provide a second independent proof of it and we
also provide several examples, which show its sharpness/weakness  in several situations.
Similar construction (with similar philosophy) will appear in forthcoming manuscripts on the subject as well. The construction of the present section shows also in a conceptual way how one can produce
different sharp lower bounds for sheaf  cohomologies (for another case see e.g. subsection
\ref{ss:cohtwisted}).

We provide the new proof in several steps. First, we define a topological lower bound for
${\rm codim}\,\im (c^{l'}(Z))$, which (a priori) will have a more elaborated form then
the right hand side $t_Z(l')$ of (\ref{eq:gen5}).
Then via several steps we will simplify it
and we show that in fact it is exactly $t_Z(l')$.

%In this section $(X,o)$ is  an arbitrary singularity (unless otherwise stated).

%\subsection{Preparation}\label{ss:prepdzl} The following notions will be helpful in the sequel.
\begin{definition}\label{def:D}
For any $Z>0$ with $|Z|$ connected we define $D(Z,l')$
as 0 if $c^{l'}(Z)$ is dominant and  1 otherwise. (For a criterion see \ref{ss:DOMINANT}.)
Furthermore, set
\begin{equation}\label{def:T}
T(Z,l'):=\chi(-l')-\min_{0\leq l\leq Z, l\in L} \chi (-l'+l)+D(Z,l').
\end{equation}
\end{definition}
 By  \cite[Theorem 5.3.1]{NNI}  for any singularity $(X,o)$, any resolution $\tX$, any
$Z>0$ and  $l'\in L'$, and for  $\calL_{gen}$ generic in  $\pic^{l'}(Z)$ one has
\begin{equation}\label{eq:INEQ1}
h^1(Z,\calL_{gen})=\chi(-l')-\min_{0\leq l\leq Z, l\in L} \chi (-l'+l).
\end{equation}

By \cite[Prop. 5.6.1]{NNI}, see also \ref{rem:rem}(3),
for any $Z\geq E$ and for any $l'\in -\calS'$,
if $\calL_{gen}^{im}$ is a generic element of $\im (c^{l'}(Z))$,  then
$h^1(Z,\calL_{gen}^{im})={\rm codim} \, \im (c^{l'}(Z))$ satisfies (the semicontinuity)
\begin{equation}\label{eq:INEQ2}
h^1(Z,\calL_{gen}^{im})\geq
\chi(-l')-\min_{0\leq l\leq Z, l\in L} \chi (-l'+l)+D(Z,l')
= h^1(Z,\calL_{gen})+D(Z,l')=T(Z,l').
\end{equation}
\begin{remark}\label{rem:restrict}
Assume that $Z>0$ is a nonzero cycle with connected support $|Z|$, but with $Z\not\geq E$.
Then the statements  from
(\ref{eq:INEQ2}) remain valid for such $Z$ once we replace $l'$ by its restriction $R(l')$, where
$R:L'\to L'(|Z|)$  is the natural cohomological operator dual to the natural
homological inclusion $L(|Z|)\hookrightarrow L$.
(For this apply the statement for the singularity supported on $|Z|$.)
On the other hand, for $l\in L(|Z|)$ one has
$\chi(-R(l'))-\chi(-R(l')+l)=-\chi(l)-(R(l'),l)_{L(|Z|)}=-\chi(l)-(l',l)=\chi(-l')-\chi(-l'+l)$.
Hence, in fact,  (\ref{eq:INEQ2}) remains valid in its original form  for any such
$Z>0$ with $|Z|$ connected.
\end{remark}

\begin{example}\label{ex:Dnagy}
The difference $h^1(Z,\calL_{gen}^{im})-h^1(Z,\calL_{gen})$ can be arbitrary large. Indeed,
let us start with a singularity with an arbitrary  analytic structure,
we fix a resolution $\tX$ with dual graph
$\Gamma$, and we  distinguish a vertex, say  $v_0$,
 associated with the irreducible divisor $E_0$. Let $k$ ($k>0$)  be the number of connected components
of  $\Gamma\setminus v_0$, and we assume that each of them is non--rational.
Furthermore, we choose  $Z\gg 0$, hence $h^1(\calO_Z)=p_g$.
Let $\tX|_{\calv\setminus v_0}$ be a small neighbourhood of $\cup_{v\not=v_0}E_v$,
let $\{\tX_i\}_{i=1}^k$ be its connected components, and set $p_{g,i}=h^1(\calO_{\tX_i})$
for the geometric genus of the singularities obtained from $\tX_i$ by collapsing its exceptional curves.
Write also $\Gamma\setminus v_0=\cup_i \Gamma_i$.
We also assume that  $-l'=nE^*_0$ with $n\gg0$.

Since $n$ is large, $\im (\widetilde{c}^{l'}(Z))=A_Z(l')$, hence
$d_Z(l')=e_Z(l')=p_g-\sum_i p_{g,i}$, cf. \cite[Th. 6.1.9]{NNI} or Theorem \ref{prop:AZ} here.
Hence, cf. (\ref{eq:INEQ2}), ${\rm codim}(\im \widetilde{c}^{l'}(Z))=h^1(\calO_Z)-d_Z(l')=
h^1(Z,\calL_{gen}^{im})=\sum_i p_{g,i}$ (in particular, $\widetilde{c}^{l'}$ is not dominant).

Next we compute $h^1(Z,\calL_{gen})=\chi(nE_0^*)-\min _{l\geq 0} \chi(nE^*_0+l)$. Write $l$ as $l_0E_0
+\widetilde{l}$, where $\widetilde{l}$ is supported on $\cup_{v\not=v_0}E_v$. Then
$\chi(nE_0^*)-\chi(nE^*_0+l)=-\chi(l)-nl_0$. If $l_0=0$ then $-\chi(l)=-\chi(\widetilde{l})$, and its maximal value is $M:=\sum_i (-\min \chi(\Gamma_i))$.
 On the other hand, if $l_0>0$ then for
$n>-M-\min \chi$ one has $-\chi(l)-l_0n<M$.
Hence $h^1(Z,\calL_{gen})=\chi(nE_0^*)-\min _{l\geq 0} \chi(nE^*_0+l)=\sum_i (-\min \chi(\Gamma_i))$.

Now, $p_{g,i}\geq 1-\min\chi(\Gamma_i)$ (cf. \cite{Wa70} or \cite{NNII}),
hence  $h^1(Z,\calL_{gen}^{im})-h^1(Z,\calL_{gen})\geq k$.
\end{example}

\bekezdes
We wish to estimate $h^1(Z,\calL_{gen}^{im})$.
Note that the estimate given by
(\ref{eq:INEQ2}), that is, $h^1(Z,\calL_{gen}^{im})\geq T(Z,l')$, sometimes is week, see the previous example. However, surprisingly, if we replace $Z$ by a smaller cycle $Z'\leq Z$, then we might get a better bound.
More precisely, first note that if $\calL_{gen}^{im}$ is a generic element of $\im (c^{l'}(Z))$, and
$0< Z'\leq Z$, then its restriction $r(\calL_{gen}^{im})$ (via $r:\pic^{l'}(Z)\to \pic^{R(l')}(Z')$)
is a generic element of $\im (c^{l'}(Z'))$. If $Z'$ has more connected components, $Z'=\sum_i Z'_i$
(where each $|Z_i'|$ is connected and $|Z_i'|\cap|Z_j'|=\emptyset $ for $i\not=j$), then for each $Z'_i$
we can apply (\ref{eq:INEQ2}). Therefore, we get
\begin{equation}\label{eq:T}
h^1(Z,\calL_{gen}^{im})\geq h^1(Z',r(\calL_{gen}^{im}))=\sum _i h^1(Z'_i,r(\calL_{gen}^{im}))\geq
\sum_i T(Z'_i, l').\end{equation}
Define
\begin{equation}\label{eq:deft}
t(Z,l'):= \max_{0<Z'\leq Z}\, \sum_i T(Z'_i, l')=\max_{0<Z'\leq Z}\, \Big(
 \, \sum_i (\chi(-l')-\min _{0\leq l_i\leq Z'_i}\chi(-l'+l_i)+D(Z_i',l'))\, \Big).
\end{equation}
(Here there is no need to restrict $l'$, cf. Remark \ref{rem:restrict}.)
Hence (\ref{eq:T}) reads as
\begin{equation}\label{eq:Z'}
 h^1(Z,\calL_{gen}^{im})\geq t(Z,l').\end{equation}
In this estimate the point is the following: though
$\sum_i (\chi(-l')-\min _{0\leq l_i\leq Z'_i}\chi(-l'+l_i)=
\chi(-l')-\min _{0\leq l\leq Z'}\chi(-l'+l)$ is definitely not larger  than
$\chi(-l')-\min _{0\leq l\leq Z}\chi(-l'+l)$, the number of components of $Z'$ might be large,
and the sum of the `non-dominant' contribution terms
$\sum_i D(Z_i',l')$ might increase the right hand side of (\ref{eq:Z'}) --- compared with $T(Z,l')$ --- drastically.
\begin{example}\label{ex:Dnagy2} {\bf (Continuation of Examle \ref{ex:Dnagy})}
The last computation of Example \ref{ex:Dnagy} shows that the maximum of
$\chi(nE_0^*)-\min _{l\geq 0} \chi(nE^*_0+l)$ is obtained for
 $l_0=0$ and $T(Z,l')=1+\sum_i(-\min\chi(\Gamma_i))$. Hence, taking $Z'=\sum_iZ'_i$,
 each $Z'_i$ supported on $\Gamma_i$ and large,  we get that the restriction of $l'$ is zero and
 $\sum _i T(Z'_i, l')=\sum_i(1-\min\chi(\Gamma_i))= T(Z,l')+k-1$.

Summarized (also from Example \ref{ex:Dnagy}),  for any analytic type one has
  $\sum_i p_{g,i}=h^1(Z,\calL_{gen}^{im})\geq t(Z,l')\geq \sum _i T(Z'_i, l')=\sum_i(1-\min\chi(\Gamma_i))$.
   However, if $\tX$ is generic then $p_{g,i}=1-\min \chi(\Gamma_i)$
 (cf. \cite{NNII}), hence, all the inequalities  transform into  equalities.
Hence, for generic analytic structure $h^1(Z,\calL_{gen}^{im})=t(Z,l')$, that is,
(\ref{eq:Z'}) provides  the optimal sharp topological lower bound.

Note also that  both  $t(Z,l')$ and
$\sum_i(1-\min\chi(\Gamma_i))$ are topological, hence if they agree for $\tX$ generic, then they
are in fact equal. %Hence $t_Z(l')=\sum_i(1-\min\chi(\Gamma_i))$ always.
Since  $p_{g,i}-1+\min \chi(\Gamma_i)$ for arbitrary analytic type can be
 considerably large,  for  {\it arbitrary} analytic types  the inequality
 (\ref{eq:Z'}) can be rather week.
\end{example}

%Before the next lemma let us recall that
%$d_Z(l')=\dim \im (c^{l'}(Z))\leq \dim V_Z(l')=e_Z(l')$, cf. (\ref{eq:6.2b}).
%
%\begin{lemma}\label{lem:tzineq} \
%Define $t_Z(l')$ by the right hand side of (\ref{eq:Z'}).
%(1)  $ d_{Z}( l') \leq  h^1(\calO_Z) -  t_{Z}( l')$.
%
%(2) If the analytic structure of $\tX$ is generic then
%$h^1(\calO_Z)-t_Z(l')\leq e_Z(l')$.
%\end{lemma}

\subsection{}\label{ss:Z'min} Our goal is to simplify
the expression (\ref{eq:deft}) of $t(Z,l')$.

First we analyse the set of cycles $Z'$ for which the maximum in the
 right hand side of (\ref{eq:deft})
can be realized.  E.g.,
if  $c^{l'}(Z)$ is dominant (equivalently, $t(Z,l')=0$, cf. \ref{ss:DOMINANT})
then any $0\leq Z'\leq Z$ realizes the maximum 0
(with all $l_i=0$). (Indeed, use the fact that $D(Z_2,l')\geq D(Z_1,l')$
for $Z_2\geq Z_1$ and $|Z_i|$ connected.)

In the next Lemmas \ref{lem:Degy} and \ref{prop:tz}
we will assume that $c^{l'}(Z)$ is not dominant.
%We also will use the notation
%$\#|Z'|$ for the number of  connected components of $Z'$.

\begin{lemma}\label{lem:Degy}
(a) Assume that $Z'$ is a  minimal cycle (or a cycle with minimal number of connected components)
among those cycles which realize the maximum in the right
hand side of (\ref{eq:deft}). Then
$D(Z'_i,l')=1$ for all $i$.

(b) If $D(Z'_i,l')=1$ then the minimal value
$\min_{0\leq l_i\leq Z'_i} \chi(-l'+l_i)$ can be  realized by $l_i>0$.
\end{lemma}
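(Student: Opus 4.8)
The plan is to reduce both parts to the topological dominance criterion of \ref{ss:DOMINANT}: for a connected cycle $Z''>0$, the map $c^{l'}(Z'')$ is dominant exactly when $\chi(-l')<\chi(-l'+l)$ for every integral $l$ with $0<l\leq Z''$ (using Remark \ref{rem:restrict} to pass to the singularity supported on $|Z''|$ if $Z''\not\geq E$). Rephrased: $D(Z'',l')=0$ forces $\min_{0\leq l\leq Z''}\chi(-l'+l)=\chi(-l')$, hence $T(Z'',l')=0$; while $D(Z'',l')=1$ means $\chi(-l'+l)\leq\chi(-l')$ for some $0<l\leq Z''$, hence $\min_{0\leq l\leq Z''}\chi(-l'+l)\leq\chi(-l')$. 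Everywhere one uses the trivial lower bound $T(Z'',l')\geq 0$, coming from admissibility of $l=0$ in the minimum.

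Part (b) is then immediate. Assuming $D(Z'_i,l')=1$, choose $0<l\leq Z'_i$ with $\chi(-l'+l)\leq\chi(-l')$, and set $m:=\min_{0\leq l_i\leq Z'_i}\chi(-l'+l_i)\leq\chi(-l')$. If $m<\chi(-l')$ then no minimiser equals $0$, so every minimiser is strictly positive; if $m=\chi(-l')$ then the chosen $l$ is itself a minimiser. Either way the minimum is attained at some $l_i>0$, which is the assertion.

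For part (a) I would argue by contradiction. Suppose $Z'$ realises the maximum in (\ref{eq:deft}) and is minimal in one of the two stated senses, but some component $Z'_1$ has $D(Z'_1,l')=0$. Then $T(Z'_1,l')=0$, so this component contributes nothing to $\sum_i T(Z'_i,l')$. If $Z'$ has at least two connected components, then $Z'':=\sum_{i\geq 2}Z'_i$ still lies in $(0,Z]$, is strictly smaller than $Z'$ and has strictly fewer components, yet $\sum_{i\geq 2}T(Z'_i,l')=t(Z,l')$, contradicting the minimality of $Z'$ in either sense. If instead $Z'$ is connected, then $t(Z,l')=T(Z'_1,l')=0$; but $Z$ itself is admissible in the maximum (it is connected, since $Z\geq E$ and the dual graph is a tree), so $0=t(Z,l')\geq T(Z,l')\geq 0$ forces $T(Z,l')=0$, hence $D(Z,l')=0$, i.e. $c^{l'}(Z)$ is dominant — contradicting the standing assumption of this part. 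Therefore $D(Z'_i,l')=1$ for all $i$.

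I anticipate no real obstacle: the whole proof is short bookkeeping with the definition of $T$ and the dominance criterion. The one place requiring care is the case split in part (a): the component-deletion argument applies only when $Z'$ has $\geq 2$ components, and the remaining connected case must be closed off by returning to the hypothesis that $c^{l'}(Z)$ is not dominant; one should also confirm that removing a zero-contribution component strictly decreases both the underlying cycle and the number of components, so that both versions of ``minimal'' in the statement are genuinely contradicted.
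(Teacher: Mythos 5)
Your proof is correct and follows essentially the same route as the paper: both parts reduce to the dominance criterion of \ref{ss:DOMINANT}, with (a) proved by deleting a zero-contribution component and (b) by the contrapositive observation that a minimum attained only at $l_i=0$ would force dominance. The only difference is that you explicitly close off the case where $Z'$ is connected (so that deletion would leave the empty cycle) by invoking the standing assumption that $c^{l'}(Z)$ is not dominant — a detail the paper's proof leaves implicit.
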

\begin{proof}
{\it (a)} Otherwise,  $c^{l'}(Z_i')$ is dominant,
and  by \ref{ss:DOMINANT} $\chi(-l') - \min_{0 \leq l_i \leq Z_i'} \chi(-l' + l_i)=0$
(realized for $l_i=0$).
Hence $T(Z_i',l')=0$, that is, the right hand side  of (\ref{eq:deft})
is realized by
$Z'-Z'_i$ too, contradicting the minimality of $Z'$. {\it (b)} If the wished minimum is realized
by $l_i=0$, and {\it only}  by $l_i=0$, then by \ref{ss:DOMINANT}
$c^{l'}(Z_i')$ is dominant, contradicting $D(Z'_i,l')=1$.
\end{proof}
%Let $Z_{coh}(Z',l')$ be the cycle defined in Corollary \ref{cor:cohcyc2}{\it (b)}, associated
%with $(Z',l')$, as the topological candidate for the cohomological cycle of
%$\calO_{\tX}(l')|_{Z'}$.
%\begin{corollary}\label{cor:tz}
% Assume that $Z'$  realizes the maximum in the right
%hand side of (\ref{eq:tzl'}) with  $D(Z'_i,l')=1$ for all $i$. Then
%\begin{equation}\label{eq:tzl'2}
%t_Z(l')=-\chi(Z_{coh}(Z',l'))-(Z_{coh}(Z',l'), l')+\#|Z'|.
%\end{equation}
%\end{corollary}
%\begin{proof}
%By assumption  $t_Z(l')=\sum_i T(Z'_i,l')
%=\chi(-l')-\min_{0\leq l\leq Z'}\chi(-l'+l)+\#|Z'|$.
%Then use  Corollary \ref{cor:cohcyc2}. \end{proof}

\begin{example}\label{ex:endvertices}  Though in Example \ref{ex:Dnagy}
we have shown that $h^1(Z, \calL_{gen}^{im})=t(Z,l')$ can be much larger than
$T(Z,l')$ (that is,  the maximizing $Z'$ usually should be necessarily strict smaller than $Z$),
 in some cases $Z'=Z$ still works. Indeed, we claim that
$$\mbox{
if the $E^*$--support
$I$ of $l'$ is included in the set of end vertices of $\Gamma$, then
$t(Z,l')=T(Z,l')$.}$$
%First assume that $c^{l'}$ is dominant, hence $D(Z,l')=0$, hence
%$T(Z,l')=h^1(Z,\calL_{gen})=h^1(Z,\calL_{gen}^{im})=t_Z(l')$ by (\ref{eq:INEQ1})
%and Theorem \ref{th:tZ}.
%
%Next, assume that $c^{l'}$ is not dominant, and
Let $Z'$ be a cycle for minimal number $n$ of
connected components $\{Z_i'\}_{i=1}^n $ for which the right hand side of
(\ref{eq:deft}) is realized.   We claim that $n=1$.
Indeed, by Lemma \ref{lem:Degy},  each $D(Z'_i,l')=1$.
Let $l_i$ be a  cycle  which realizes
 $\chi(-l') - \min_{0 \leq l \leq Z_i'} \chi(-l' + l)$. By Lemma \ref{lem:Degy}
we can assume  $l_i\not=0$.
%, because otherwise   (by the criterion from   Remark \ref{rem:ab})
%  $c^{l'}(Z_i')$ would be  dominant, contradicting the fact that  $D(Z'_i,l')=1$.

  If $n > 1$ then  let $Z_1$ and $Z_2$ be two adjacent component, which means, that there is a vertex
  $u \in |Z_1'|$ and $v \in |Z_2'|$ and a (minimal) path
$u_1 = u, u_2, \cdots , u_t = v$, such that $u_2, \cdots, u_{t-1} \notin |Z'|$ and $u_k$ and $u_{k+1}$ are neighbours in the resolution graph. Moreover, define a new cycle by
$ Z_{1, new}' = Z_1' + Z_2' + \sum_{2 \leq k \leq t-1} E_{u_k}$ and $Z'_{new} = Z_{1, new}'
 + \sum_{3 \leq i \leq n} Z_i'$.
Similarly, let us have a minimal path
between $|l_1|$ and $|l_2|$: vertices  $w_1, \cdots,  w_l$, such that $w_1 \in |l_1|$ and $w_l \in |l_2|$, $w_2, \cdots, w_{l-1} \notin |l_1| \cup |l_2|$ and $w_k , w_{k+1}$ are
neighbours in the resolution graph.  Then define
$l_{1, new} = l_1 + l_2 + \sum_{2 \leq k \leq l-1}E_{w_k}$. The point is that
the vertices $w_2, \cdots, w_{l-1}$ are not end vertices, in particular
 $(l', \sum_{2 \leq k \leq l-1}E_{w_k})=0$.

Note also that $D(Z'_{1,new}, l')=1$.
Then a computation gives that
\begin{equation}
\chi(-l') -  \chi(-l' + l_{1, new}) + D(Z_{1,new}',l')\geq
T(Z_1, l') + T(Z_2, l'),
\end{equation}
or,
$T(Z_{1, new}, l') \geq T(Z_1, l') + T(Z_2, l')$,
contradicting the minimality of $Z'$. Hence necessarily $n=1$.

On the other hand, if $Z'$ is connected, then $T(Z',l')\leq T(Z,l')$, % (here use ($\dag$) from above)
 hence
the maximal value in the right hand side of (\ref{eq:Z'}) is realized for $Z$ as well
(and maybe by several other smaller cycles too; here we minimalized $\#|Z'|$ by increasing $Z'$).

The present example together with  Examples \ref{ex:Dnagy}
and \ref{ex:Dnagy2} show that the structure of possible
cycles $Z'$ for which the maximality in (\ref{eq:deft}) realizes can be rather subtle.
\end{example}

\begin{lemma}\label{prop:tz}
Assume that $Z'$ is a  minimal cycle
among those cycle which realizes the maximum in the right
hand side of (\ref{eq:deft}). Then the following facts hold:

(a) $\min _{0\leq l_i\leq Z_i'} \chi (-l'+l_i)$ is realized by $l_i=Z_i'$.

(b)  $\min _{0\leq l_i\leq Z_i'} \chi (l)$ is realized by $l_i=Z_i'$.

(c) $t(Z',l')=t(Z,l')=\sum_i\big( -(Z_i',l')-\chi(Z_i')+1\big)$.

%(d) All $ c^{l'}(Z'_i)$ and  $ c^{l'}(Z')$ are  birational maps (onto their images).

\end{lemma}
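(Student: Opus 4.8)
The plan is to extract the structural consequences of minimality of $Z'$ by a careful component-by-component analysis, using Lemma \ref{lem:Degy} as the starting point. Since the expression (\ref{eq:deft}) is a sum over connected components $Z'_i$ of $Z'$ (with $l'$ effectively replaced by its restriction to each $|Z'_i|$, as noted after Remark \ref{rem:restrict}), and since the components are mutually independent, it suffices to prove (a)--(c) for a single connected component; I would therefore fix one $Z'_i$ and drop the subscript, writing $W:=Z'_i$, $|W|$ connected, $D(W,l')=1$ by Lemma \ref{lem:Degy}(a).

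First I would prove (a). Let $l_0$ with $0\le l_0\le W$ realize $\min_{0\le l\le W}\chi(-l'+l)$; by Lemma \ref{lem:Degy}(b) we may take $l_0>0$, and by replacing $W$ by the union of the connected components of $|l_0|$ (which can only decrease $W$ and its number of components while keeping the same contribution to the max, because $T$ is monotone in each direction needed here) minimality forces $|l_0|=|W|$, i.e. $l_0\ge E_{|W|}$. Now suppose $l_0\ne W$, so $l_0<W$ with the same support. Consider replacing $W$ by $l_0$: then $D(l_0,l')\le D(W,l')=1$, and $\min_{0\le l\le l_0}\chi(-l'+l)\le \chi(-l'+l_0)=\min_{0\le l\le W}\chi(-l'+l)$, so $T(l_0,l')\ge T(W,l')$; combined with the monotonicity $D(l_0,l')=1$ (which holds since $c^{l'}(l_0)$ cannot be dominant — otherwise by \ref{ss:DOMINANT} $c^{l'}(W)$ would be dominant, contradicting $D(W,l')=1$), we get that $l_0$ still realizes the maximum, contradicting minimality of $W$ unless $l_0=W$. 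Hence the minimum is realized at $l=W$, which is (a).

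Next, (b) is essentially (a) rewritten: since $\chi(-l'+l)=\chi(-l')-(l',l)-\chi(l)+\chi(0)$... more precisely $\chi(-l'+l)-\chi(-l')=-(l',l)-\chi(l)$ (using $\chi(x)=-(x,x-Z_K)/2$ and bilinearity), and since $l'\in-\calS'$ gives $-(l',l)\ge -(l',l')$... wait — I should instead argue directly: for $0\le l\le W$ we have $\chi(-l'+l)=\chi(-l')+\big(-(l',l)-\chi(l)\big)$, so minimizing $\chi(-l'+l)$ is the same as minimizing $-(l',l)-\chi(l)$. That is not literally minimizing $\chi(l)$, so (b) needs its own argument: I would show that if $\chi(l)$ is minimized (over $0\le l\le W$) at some $l_1\ne W$ then, using $l'\in-\calS'$ so that $t\mapsto(l',l)$ is monotone, one can push $l_1$ up to $W$ without increasing $-(l',l)-\chi(l)$, and then invoke (a); alternatively prove (b) first by the same minimality/support argument as (a) and deduce (a). I expect the cleanest route is to prove both simultaneously from the identity above. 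Finally (c): plugging $l_i=Z_i'$ into $T(Z_i',l')=\chi(-l')-\chi(-l'+Z_i')+D(Z_i',l')$ and using $\chi(-l')-\chi(-l'+Z_i')=-(l',Z_i')-\chi(Z_i')$ together with $D(Z_i',l')=1$ gives $T(Z_i',l')=-(l',Z_i')-\chi(Z_i')+1$; summing over $i$ and using that $Z'$ realizes the max (so $t(Z',l')=t(Z,l')$) yields the displayed formula. The main obstacle I anticipate is the bookkeeping in (a)/(b): carefully checking that the "push up to full support / push the minimizing cycle up to $W$" operations genuinely preserve (or increase) the value in (\ref{eq:deft}) while respecting the $D$-term, i.e. that $D$ never drops from $1$ to $0$ under these moves — this is where the topological dominance criterion \ref{ss:DOMINANT} and the monotonicity $D(Z_2,l')\ge D(Z_1,l')$ for $Z_1\le Z_2$ with connected support must be applied with care.
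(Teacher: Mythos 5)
Your overall strategy coincides with the paper's: replace the component $Z_i'$ by the cycle realizing the minimum, invoke minimality of $Z'$, deduce (b) from (a) using that $l\mapsto (l',l)$ is monotone because $l'\in-\calS'$, and obtain (c) by substitution. However, two steps need repair. First, your justification of $D(l_0,l')=1$ is backwards: \ref{ss:DOMINANT} says that dominance of $c^{l'}(W)$ implies dominance of $c^{l'}(l_0)$ for $0<l_0\leq W$, not the converse, so ``otherwise $c^{l'}(W)$ would be dominant'' is not a valid inference. The conclusion is nevertheless true, by a direct argument: since $l_0>0$ realizes the minimum, $\chi(-l'+l_0)\leq \chi(-l'+0)=\chi(-l')$, which violates the dominance criterion ``$\chi(-l')<\chi(-l'+l)$ for all $0<l\leq l_0$''; hence $c^{l'}(l_0)$ is not dominant.

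Second, and more seriously, your opening move (``replacing $W$ by the union of the connected components of $|l_0|$ \dots keeping the same contribution \dots because $T$ is monotone in each direction needed here'') glosses over the crux of the lemma. When $l_0$ splits into connected pieces $l_{0,k}$, the $\chi$-part of the contribution to (\ref{eq:deft}) is preserved, but the $D$-part is not automatically so: each $D(l_{0,k},l')$ could a priori be $0$, since non-dominance does not descend to subcycles, and then the total would drop by $1$ and minimality could not be invoked. The paper's proof is organized precisely around this point: writing $M=\chi(-l')-\chi(-l'+l_0)=\sum_k\bigl(-\chi(l_{0,k})-(l',l_{0,k})\bigr)\geq 0$, at least one summand is $\geq 0$, so by \ref{ss:DOMINANT} at least one $c^{l'}(l_{0,k})$ is non-dominant and $\sum_k D(l_{0,k},l')\geq 1$; only then does $\sum_k T(l_{0,k},l')\geq T(Z_i',l')$ follow and minimality force $l_0=Z_i'$. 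Without this observation the replacement argument does not close. Finally, part (b) is left as a sketch with two alternative routes; the intended deduction from (a) is the paper's one-liner --- $\chi(Z_i')+(Z_i',l')\leq \chi(l_i)+(l_i,l')$ together with $(Z_i',l')\geq (l_i,l')$ gives $\chi(Z_i')\leq \chi(l_i)$ --- and should be written out.
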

\begin{proof}
{\it (a)} For each $Z_i'$ let $l_i$ be minimal non--zero cycle (cf. Lemma \ref{lem:Degy})
such that $M_i:=\chi(-l')-\min _{0\leq l\leq Z_i'} \chi(-l'+l)$ is realized by $l_i$. Let
$l_i=\cup_k l_{i,k}$ be its decomposition into cycles with $|l_{i,k}|$ connected and disjoint.
Since  $M_i=-\chi(l_i)-(l',l_i)\geq 0$, there exists $k$ such that
$\chi(-l')- \chi(-l'+l_{i,k})=-\chi(l_{i,k})-(l',l_{i,k})\geq 0$,
hence  by the criterion from \ref{ss:DOMINANT}  the Abel map $c^{l'}(l_{i,k})$ must be
non--dominant. Thus (using also $D(Z_i',l')=1$ from Lemma \ref{lem:Degy}{\it (a)})
\begin{equation}\label{eq:dag}
\sum_k T(l_{i,k},l')\geq \chi(-l')-\chi(-l'+l_i)+1=T(Z'_i,l').
\end{equation}
 In particular, by the minimality of $Z_i'$, $Z_i'=l_i$.

{\it (b)} By part {\it (a)}  $\chi(Z_i')+(Z_i',l')\leq \chi(l_i)+(l_i,l')$
for any $0\leq l_i\leq Z_i'$.
But, since $l'\in -\calS'$, $(Z_i',l')\geq (l_i,l')$, hence $\chi(Z_i')\leq \chi(l_i)$ for any $0\leq l_i\leq Z_i'$. Part {\it (c)} follows from (\ref{eq:deft}) and  {\it (a)}.
%{\it (d)} By (\ref{eq:h1Z}) $1-\chi(Z_i')=h^1(\calO_{Z_i'})$, hence by {\it (c)}
% $t_{Z'}(l')=h^1(\calO_{Z'})-(Z',l')$, or, $d_{Z'}(l')=h^1(\calO_{Z'})-t_{Z'}(l')=(Z',l')$,
% which reads as $\dim(\im c^{l'}(Z'))=\dim (\eca^{l'}(Z'))$, cf. \cite[Th. 3.1.10]{NNI}.
\end{proof}
Recall that in \ref{ss:shifted} we defined $t_Z(l'):=
 \max_{0\leq Z'\leq Z}\big\{ \, -(l', Z')
-  \chi(Z') +  \chi(E_{|Z'|}) \, \big\}$.
%\marginpar{mikor realizalodik a max a $t_Z(l')$??}

\begin{corollary}\label{cor:1} $t(Z,l')=t_Z(l')$.
\end{corollary}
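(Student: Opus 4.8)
The plan is to reconcile the two maxima piece by piece, treating Lemma~\ref{prop:tz} as the main input and exploiting that the resolution graph is a tree. First I would record two elementary facts. (i) For $l\in L$ one has $\chi(-l')-\chi(-l'+l)=-(l',l)-\chi(l)$, immediate from $\chi(x)=-(x,x-Z_K)/2$; in particular, for a cycle $Z'>0$ with connected support, $T(Z',l')=\max_{0\le l\le Z'}\big(-(l',l)-\chi(l)\big)+D(Z',l')$. (ii) For any \emph{connected} $I\subset\calv$ the induced subgraph is again a tree, so $\chi(E_I)=|I|-e_I=1$, where $e_I$ is the number of induced edges. Consequently, writing $Z'=\sum_iZ'_i$ for the decomposition of an effective cycle into connected components, distinct $|Z'_i|$ are non-adjacent in $\Gamma$, hence $(Z'_i,Z'_j)=0$, $\chi$ is additive over the $Z'_i$, and $-(l',Z')-\chi(Z')+\chi(E_{|Z'|})=\sum_i\big(-(l',Z'_i)-\chi(Z'_i)+1\big)$.

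For the inequality $t(Z,l')\le t_Z(l')$ I would argue as follows. If $c^{l'}(Z)$ is dominant, then by~\ref{ss:DOMINANT} every connected component $Z'_i$ of every $0<Z'\le Z$ satisfies $D(Z'_i,l')=0$ and $\min_{0\le l\le Z'_i}\chi(-l'+l)=\chi(-l')$, so $T(Z'_i,l')=0$ and $t(Z,l')=0\le t_Z(l')$. If $c^{l'}(Z)$ is not dominant, I would take a minimal maximizing cycle $Z'$ as in Lemma~\ref{prop:tz}; part~(c) there gives $t(Z,l')=\sum_i\big(-(Z'_i,l')-\chi(Z'_i)+1\big)$, which by fact~(ii) equals $-(l',Z')-\chi(Z')+\chi(E_{|Z'|})$, one of the terms in the maximum defining $t_Z(l')$.

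For the reverse inequality $t_Z(l')\le t(Z,l')$ I would fix any $0\le Z''\le Z$ with connected-component decomposition $Z''=\sum_iZ''_i$ and, by fact~(ii), reduce to the termwise bound $-(l',Z''_i)-\chi(Z''_i)+1\le T(Z''_i,l')$. By fact~(i), $-(l',Z''_i)-\chi(Z''_i)=\chi(-l')-\chi(-l'+Z''_i)\le\chi(-l')-\min_{0\le l\le Z''_i}\chi(-l'+l)$. If $D(Z''_i,l')=1$, the right-hand side is $T(Z''_i,l')-1$, so the bound follows. If $D(Z''_i,l')=0$, then $c^{l'}(Z''_i)$ is dominant, so by~\ref{ss:DOMINANT} $\chi(-l'+Z''_i)>\chi(-l')$; since $\chi$ is integer-valued on $L$ this forces $\chi(-l'+Z''_i)\ge\chi(-l')+1$, hence $-(l',Z''_i)-\chi(Z''_i)+1\le 0=T(Z''_i,l')$. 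Summing over $i$ and using $\sum_iT(Z''_i,l')\le t(Z,l')$ gives $-(l',Z'')-\chi(Z'')+\chi(E_{|Z''|})\le t(Z,l')$, and taking the maximum over $Z''$ yields the claim.

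The genuine content of the corollary is already contained in Lemma~\ref{prop:tz}; what remains is bookkeeping. The only steps that need a little care are fact~(ii) — the identity $\chi(E_I)=1$ for connected $I$ and the resulting additivity of $\chi$ over connected components, which uses that distinct components are non-adjacent in $\Gamma$ — and the integrality argument in the $D(Z''_i,l')=0$ case that upgrades the strict inequality $\chi(-l'+Z''_i)>\chi(-l')$ to a gain of $1$. I do not expect any further obstacle beyond keeping the dichotomy on $D(Z''_i,l')$ clean and correctly invoking the restriction conventions of Remark~\ref{rem:restrict} for cycles supported on proper subtrees.
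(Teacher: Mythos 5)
Your proposal is correct and follows essentially the same route as the paper: one direction via Lemma \ref{prop:tz}(c) together with the additivity of $\chi$ and of $(l',\cdot)$ over non-adjacent connected components (with $\chi(E_{|Z'_i|})=1$), and the reverse direction by a termwise comparison of $-(l',Z''_i)-\chi(Z''_i)+1$ with $T(Z''_i,l')$. Your explicit integrality argument for components with $D(Z''_i,l')=0$ is just a slightly more careful version of the paper's ``we can assume each $c^{l'}(Z'_i)$ is not dominant.''
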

\begin{proof}
If $c^{l'}(Z)$ is dominant then both sides are zero. Otherwise, by Lemma \ref{prop:tz}(c)
(with its notations) $t(Z,l')=\sum_i \big( -(Z_i',l')-\chi(Z_i')+1\big)\leq t_Z(l')$. On the other hand, let us fix some $Z'=\cup_iZ'_i$ for which the maximum in $t_Z(l')$ is realized.
Then we can assume that each $c^{l'}(Z'_i)$ is not dominant. Then
$ -(Z_i',l')-\chi(Z_i')+1=\chi(-l')-\chi(-l'+Z_i')+1\leq
\chi(-l')-\min _{0\leq l_i\leq Z_i'}\chi(-l'+l_i)+D(Z'_i,l')$. Hence
$t_Z(l')\leq t(Z,l')$ too.
\end{proof}

\begin{remark} The second proof of  Theorem \ref{th:tz} follows from
(\ref{eq:Z'}) and Corolary \ref{cor:1}.
\end{remark}

\section{The
$\calL_0$--projected Abel map}\label{s:projAbel}

In this section we introduce a new object, a modification of the Picard group $\pic(Z)$, which
will play a key role in the cohomology computation of the shifted line bundles of type
$\{\calL_0\otimes \calL\}_{\calL\in \im (c^{l'}(Z))}$.

\subsection{The $\calL_0$--projected Picard group}\label{ss:diagrams}
Let $(X,o)$ be a normal surface singularity. For simplicity
we  assume (as always in this manuscript) that the link is a rational homology sphere.
Let $\tX$ be one of its good resolutions and $Z\geq E$ an effective cycle. Fix also $\calL_0\in
\pic(Z)$ such  that $H^0(Z,\calL_0)_{reg}\not=\emptyset$ (cf. \ref{ss:AbelMap}).   Choose
$s_0\in H^0(Z,\calL_0)_{reg}$ arbitrarily, and write ${\rm div}(s_0)=D_0\in \eca ^{l_0'}(Z)$,
where $l'_0=c_1(\calL_0)\in -\calS'$. Motivated by the exponential exact sequence of sheaves
$0\to \Z_{Z}\stackrel {i}\to \calO_Z\to \calO_Z^*\to 0$, we define $\calL_0^*:=
{\rm coker} ( \Z_Z\stackrel {i}{\to}\calO_Z\stackrel{s_0}{\longrightarrow}\calL_0)$, where
the second morphism is the multiplication by (restrictions of) $s_0$. Then we have the following commutative diagram of sheaves:
$$\begin{array}{ccccccccc}
 &&&& 0 && 0 && \\
 &&&& \downarrow  && \downarrow && \\
0 & \longrightarrow & \Z_Z & \stackrel{i}{\longrightarrow} &
\calO_Z  & \longrightarrow & \calO_Z^* & \longrightarrow & 0 \\
 & & \downarrow\vcenter{%
 \rlap{=}} &  & \downarrow\vcenter{%
 \rlap{$\scriptstyle{s_0}$}} & &
 \downarrow\vcenter{%
 \rlap{$\scriptstyle{s_0^*}$}} & & \\
 0 & \longrightarrow & \Z_Z & \longrightarrow &
\calL_0 & \longrightarrow & \calL_0^* & \longrightarrow & 0 \\
& & &  & \downarrow & &
 \downarrow & & \\
 & & &  & \calO_{D_0} & $=$  & \calO_{D_0} & & \\
 & & &  & \downarrow & &
 \downarrow & & \\
 & & &  & 0 &   & 0 & &
 \end{array} $$
where $s_0^*$ is induced by $s_0$. At cohomological level  we get the (identical/renamed) diagrams
$$\begin{array}{cccccccccccccc}
 & H^0(\calO_{D_0}) & =  & H^0(\calO_{D_0})  & & &
  & \hspace{2cm}
 & H^0(\calO_{D_0}) & =  &H^0(\calO_{D_0})  && &  \\
 & \downarrow \vcenter{%
 \rlap{$\delta^0$}} && \downarrow\vcenter{%
 \rlap{$\delta$}} &&&
  &
   & \downarrow \vcenter{%
 \rlap{$\delta^0$}} && \downarrow\vcenter{%
 \rlap{$\delta$}} &&&
 \\
0 \to & H^1(\calO_Z) & \to &
H^1(\calO^*_Z)  & \stackrel{c_1}{\to}  & L' & \to   0
&
0 \to & {\rm Pic}^0(Z) & \to &
\pic(Z)  & \stackrel{c_1}{\to}  & L' & \to   0 \\
 & \downarrow \vcenter{%
 \rlap{$s^0$}} && \downarrow\vcenter{%
 \rlap{$s$}} && \downarrow \vcenter{%
 \rlap{$=$}}&
  &
   & \downarrow \vcenter{%
 \rlap{$s^0$}} && \downarrow\vcenter{%
 \rlap{$s$}} && \downarrow \vcenter{%
 \rlap{$=$}}   &  \\
 0  \to & H^1(\calL_0) & \to &
H^1(\calL^*_0)  & \stackrel{c_1}{\to}  & L' & \to  0
&
0 \to & {\rm Pic}^0_{\calL_0}(Z) & \to &
\pic_{\calL_0}(Z)  & \stackrel{c_1}{\to}  & L' & \to  0 \\
 & \downarrow && \downarrow &&&
  &
   & \downarrow  && \downarrow&&& \\
 & 0&& 0 &&&
  &
   & 0  && 0 &&&
 \end{array} $$
where we use the notation $\pic_{\calL_0}(Z):=H^1(Z, \calL^*_0)$ --- and call it {\it the
$\calL_0$--projected Picard group} ---,   and (its linearization)
$\pic^0_{\calL_0}(Z):=H^1(Z, \calL_0)$.
Note that the classical first Chern class map $c_1$ factorizes to a well--defined map
$c_1:\pic_{\calL_0}(Z)\to L'$.
Set also $\pic^{l'}_{\calL_0}(Z):=c_1^{-1}(l')$ for any $l'\in L'$; it is an affine space isomorphic to $\pic^{l'}(Z)/ \im (\delta)$
 associated with the vector space $\pic^0_{\calL_0}(Z)=H^1(Z, \calL_0)=H^1(\calO_Z)/\im (\delta^0)$.

The corresponding vector spaces appear in the following exact sequences as well.
Let us take another line bundle $\calL\in \pic^{l'}(Z)$ without fixed components,
$s\in H^0(Z,\calL)_{reg}$ and $D:={\rm div}(s)$. Then one can take the exact sequences
$0\to \calO_Z\stackrel{s}{\to} \calL\to \calO_D\to 0$ and
$0\to \calL_0\stackrel{s}{\to } \calL_0\otimes \calL\to \calO_D\to 0$. They induce
(at cohomology, or `tangent' vector space level)   the following commutative diagram
$$\begin{array}{cccccc}
& & H^0(\calO_{D_0}) & =  & H^0(\calO_{D_0})  &  \\
& & \downarrow \vcenter{%
 \rlap{$\delta^0$}} && \downarrow & \\
H^0(\calO_D) &  \stackrel{\delta^0_{\calL}}{\to} & H^1(\calO_Z) & \stackrel{s}{\to} &
H^1(\calL)  & \to   0\\
 \Big\downarrow \vcenter{%
 \rlap{=}} && \Big\downarrow\vcenter{%
 \rlap{$s^0_{\calL_0}$}} && \Big\downarrow& \\
H^0(\calO_D) &  \stackrel{\bar{\delta}^0_{\calL}}{ \to} & H^1(\calL_0) & \stackrel{s}{\to} &
H^1(\calL_0\otimes \calL)  & \to   0\\
& & \downarrow  && \downarrow & \\
& & 0  && 0 &
 \end{array} $$
This is related with the Abel map $c^{l'}(Z):\eca^{l'}(Z)\to \pic^{l'}(Z)$
 as  follows. Recall from \cite[3.2.2]{NNI} that the tangent linear map
 $T_D \,c^{l'}(Z):T_D\, \eca^{l'}(Z)\to T_{\calL}\, \pic^{l'}(Z)$ can be identified
 with $\delta^0_{\calL}:H^0(\calO_D)\to H^1(\calO_Z)$. Therefore, if $\calL=\calL_{gen}^{im}$
 is a generic
 element of $\im(c^{l'}(Z))$ then ${\rm codim}\,\im (c^{l'}(Z))=\dim \, H^1(\calO_Z)/ \im (\delta^0_{\calL})= h^1(Z, \calL)$.
 Similarly, consider the composition
 $$c^{l'}_{\calL_0}(Z):\eca^{l'}(Z)\,\stackrel{ c^{l'}(Z)}{\longrightarrow}\,
  \pic^{l'}(Z)\, \stackrel{s^0_{\calL_0}}{\longrightarrow}\,  \pic^{l'}_{\calL_0}(Z).$$
 We call it {\it the $\calL_0$--projection  of the Abel map $c^{l'}(Z)$}.
Using the previous paragraph we obtain that the tangent linear map
 $T_D \,c^{l'}_{\calL_0}(Z):T_D\, \eca^{l'}(Z)\to T_{\calL}\, \pic^{l'}_{\calL_0}(Z)$ can be identified
 with $\bar{\delta}^0_{\calL}=s^0_{\calL_0}\circ \delta^0_{\calL}:H^0(\calO_D)\to H^1(\calL_0 )$. Therefore, if $\calL$ is a generic
 element of $\im(c^{l'}_{\calL_0}(Z))$
 (or, it is the image by $s_{\calL_0}$ of a generic element $\calL^{im}_{gen}$ of
  $\im(c^{l'}(Z))$)
 then
 \begin{equation}\label{eq:codimtwisted}
 {\rm codim}\,\im (c^{l'}_{\calL_0}(Z))=\dim \, H^1(\calL_0)/ \im (\bar{\delta}^0_{\calL})
 =h^1(Z, \calL_0\otimes \calL).\end{equation}
 This fact fully motivates the next point of view:
  if one wishes to study $h^1(Z, \calL_0\otimes \calL)$
 with $\calL_0$ fixed and $\calL\in \pic^{l'}(Z)$ then --- as a tool ---
 the right Abel map is the $\calL_0$--projected $c^{l'}_{\calL_0}(Z)$.

\subsection{The cohomology $h^1(Z,\calL_0\otimes \calL)$.}\label{ss:cohtwisted}
Using the exact sequence $H^0(\calO_D)\to H^1(\calO_Z)\stackrel{s}{\to } H^1(Z, \calL)\to 0$ and
$h^0(\calO_D)=(l',Z)$ we obtain the inequality $h^1(Z,\calL)\geq h^1(\calO_Z)-(l',Z)$.
Usually it is not sharp, since $\delta_{\calL}^0$ might not be  injective.
However, as in the prototype construction from section \ref{s:tz} (and even in
its preceding  sections), if we consider any $Z_1\leq Z$
then we also have $h^1(Z,\calL)\geq h^1(Z_1,\calL)\geq h^1(\calO_{Z_1})-(l',Z_1)$, hence
$h^1(Z,\calL)\geq \max_{Z_1\leq Z}\{  h^1(\calO_{Z_1})-(l',Z_1)\}$, and, remarkably, this
for the generic $\calL_{gen}^{im}\in \im(c^{l'}(Z))$ is an equality (cf. (\ref{eq:codim})).

Similarly, using the exact sequence $H^0(\calO_D)\to H^1(Z, \calL_0)\stackrel{s}{\to }
H^1(Z, \calL_0\otimes \calL)\to 0$
 we obtain $h^1(Z,\calL_0\otimes \calL)\geq h^1(\calL_0)-(l',Z)$.
Again, this usually is not sharp. However,  by the same procedure,
%if we consider any $Z_1\leq Z$
%then we also have $h^1(Z,\calL_0\otimes \calL)\geq h^1(Z_1,\calL_0\otimes \calL)
%\geq h^1(Z_1,\calL_0)-(l',Z_1)$, hence
\begin{equation}\label{eq:twistedineq}
h^1(Z,\calL_0\otimes \calL)\geq \max_{0\leq Z_1\leq Z}\{  h^1(Z_1,\calL_0)-(l',Z_1)\}.
\end{equation}
In the next section (cf. Corollary \ref{cor:formL})
we will prove that this is again an equality for
   the generic $\calL=\calL_{gen}^{im}\in \im(c^{l'}_{\calL_0}(Z))$.
   (The above inequality (\ref{eq:twistedineq}) can be compared with (\ref{eq:h1tz}) as well.)

\subsection{Compatibility with Laufer duality and differential forms}\label{ss:LauferD}
Consider the perfect pairing
$\langle \,,\,\rangle:H^1(\calO_Z)\otimes H^0(\Omega^2_{\tX}(Z))/H^0(\Omega^2_{\tX})\to \C$ from
\ref{bek:Z}, see alo \cite{NNI}. Once we fix $D_0={\rm div}(s_0)$ of certain $s_0\in H^0(Z,\calL_0)_{reg}$, we can define $\Omega_Z(D_0):= (\im (\delta^0_{\calL_0}))^\perp \subset H^0(\Omega^2_{\tX}(Z))/H^0(\Omega^2_{\tX})$.
%(In \ref{bek:OmegaD} it   appears as  $\Omega_Z(\widetilde{D}_0)$.)
It is generated by forms which vanish on
the image of the tangent map
$T_{D_0} \,c^{l'_0}(Z)$,
 identified with $\delta^0_{\calL_0}$, cf. \ref{bek:OmegaD} and (\ref{eq:dual}).
   The pairing $\langle\,,\,\rangle$ induces a perfect pairing
 $\langle\,,\,\rangle_{\calL_0}:H^1(Z, \calL_0)\otimes \Omega_Z(D_0)\to\C$, see also Theorem \ref{th:Formsres}.

 \subsection{The $\calG$--filtration of $\Omega_Z(D_0)=H^1(\calL_0)^*$}\label{ss:filtrL}
 Consider the situation and notations of Definition \ref{bek:filtrforms};
 in particular,  $\calG_l = H^0(\Omega^2_{\tX}(l))/H^0(\Omega^2_{\tX})$ for any $0<l\leq Z$.
  In the presence of $\calL_0=\calO_Z(D_0)$ as above, we have the subspace
  $\Omega_Z(D_0)=(\im \delta^0)^\perp \subset H^0(\Omega^2_{\tX}(Z))/H^0(\Omega^2_{\tX})$,
  and the induced perfect pairing
  $\langle \,,\,\rangle _{\calL_0} : H^1(Z,\calL_0) \otimes \Omega_Z(D_0)\to \C$. Similarly, for any
  $0<l\leq Z$, we have the analogues data
   $\Omega_l(D_0)=(\im (\delta^0|_l))^\perp \subset H^0(\Omega^2_{\tX}(l))/H^0(\Omega^2_{\tX})$,
  and the induced perfect pairing
  $\langle \,,\,\rangle _{\calL_0|_l} : H^1(l,\calL_0) \otimes \Omega_l(D_0)\to \C$.
One has the following inclusions  inside $H^0(\Omega^2_{\tX}(Z))/H^0(\Omega^2_{\tX})$
$$\begin{array}{ccc}
\Omega_l(D_0) & \longrightarrow & \Omega _Z(D_0) \\
\downarrow && \downarrow\\
\calG_l & \longrightarrow & H^0(\Omega^2_{\tX}(Z))/H^0(\Omega^2_{\tX})
 \end{array} $$
and, in fact, $\Omega_l(D_0)=\Omega_Z(D_0)\cap \calG_l$. Hence
$\{\Omega_l(D_0)\}_l=\{\Omega_Z(D_0)\cap \calG_l\}_l$ filters $\Omega_Z(D_0)$. Moreover, by $\langle\,,\,\rangle_{\calL_0|_l}$,
one has
$\dim\, \Omega_Z(D_0)\cap\calG_l=\dim\, \Omega_l(D_0)=h^1(l,\calL_0)$.

\subsection{Dimensions/Notations}\label{ss:dimensions}
 The dimension of $\im (c^{l'}_{\calL_0}(Z))$ is denoted by $d_{\calL_0,Z}(l')$.

If $A_Z(l')$ is the smallest affine space which contains $\im (c^{l'}(Z))$ in
$\pic^{l'}(Z)$, then $s_{\calL_0}(A_Z(l'))$ is the smallest affine space which contains
$\im (c^{l'}_{\calL_0}(Z))$. We denote it by $A_{\calL_0,Z}(l')$ and its dimension by $e_{\calL_0,Z}(l')$.
From definitions
$d_{\calL_0,Z}(l')\leq e_{\calL_0,Z}(l')$.

%We claim that if $d_{Z}(l')= e_{Z}(l')$ then $d_{\calL_0,Z}(l')=e_{\calL_0,Z}(l')$
%as well. Indeed, $d_{Z}(l')= e_{Z}(l')$ implies that $\overline{\im (c^{l'}(Z))}=A_Z(l')$,
%hence $\overline{s_{\calL_0}(\im (c^{l'}(Z)))}=s_{\calL_0}A_Z(l')=A_{\calL_0,Z}(l')$.

In the next section we provide two algorithms for the computation of $d_{\calL_0,Z}(l')$,
the analogues of the algorithms from Theorems \ref{th:ALGORITHM} and \ref{th:ALGORITHM4}.

\section{$\calL_0$--projected versions of the algorithms}\label{s:twisted}

\subsection{The setup}\label{ss:setup} Let  us fix $(X,o)$, a good resolution $\tX$, $Z\geq E$ and $l'\in-\calS'$.  We also fix  a line bundle $\calL_0$ as in section \ref{s:projAbel}, whose notations
we will adopt. In order to estimate $d_{\calL_0,Z}(l')$ we proceed as in sections \ref{s:GenAlg}
and \ref{bek:algsetup2}. In particular, we perform the modificatiosn $\pi_{{\bf s}}
: \tX_{{\bf s}}\to \tX$, and we adopt the notations of \ref{bek:algsetup} as well.
By the generic choice of the centers of blow ups we can assume that they differ from the
support of $D_0$.
Notice that we have a natural identification between $H^1(\calO_Z)$ and $H^1(\calO_{Z_{{\bf s}}})$,
and also between  $H^1(\calO^*_Z)$ and $H^1(\calO^*_{Z_{{\bf s}}})$.
Furthermore, we denote the divisor $\pi_{{\bf s}}^{-1}(D_0)$ on $\tX_{{\bf s}}$ still by $D_0$
(basically unmodified),
 and the line bundle $\calO_{Z_{{\bf s}}}(D_0) $ still by $\calL_0$.
 Then we have the identification of $H^0(Z,\calO_D)$ with $H^0(Z_{{\bf s}},\calO_D)$,
 and also $H^1(Z,\calL_0)\simeq H^1(Z_{{\bf s}}, \calL_0)$ and
$H^1(Z,\calL^*_0)\simeq H^1(Z_{{\bf s}}, \calL^*_0)$
(hence identifications of the corresponding commutative diagrams from \ref{ss:diagrams} as well).
The subspace $\Omega_{Z_{{\bf s}}}(D_0)$ in $H^1(\calO_{Z_{{\bf s}}})^*=H^1(\calO_Z)^*$ is also
`stable' of dimension $h^1(Z,\calL_0)$.

Write $d_{\calL_0,{\bf s}}$ and $e_{\calL_0,{\bf s}}$ the corresponding dimensions associated with $\tX_{{\bf s}}$ defined as in \ref{ss:dimensions}.
Then  $d_{\calL_0,{\bf s}} \leq e_{\calL_0,{\bf s}}$.
If  ${\bf s}={\bf 0}$ then
 $d_{\calL_0,{\bf 0}}= d_{\calL_0,Z}(l')$ and
  $e_{\calL_0,{\bf 0}}= e_{\calL_0,Z}(l')$.

\begin{theorem}\label{th:ALGORITHML}

(1) $d_{\calL_0,{\bf s}} - d_{\calL_0,{\bf s}^{v,k}} \in \{0, 1\}$. Moreover,
$d_{\calL_0,{\bf s}} = d_{\calL_0,{\bf s}^{v,k}}$
if and only if for a generic point $\bar{\calL} \in
 \im ( c_{\calL_0}^{l'_{{\bf s}}}(Z_{{\bf s}}))$ the set of divisors in
 $ (c_{\calL_0}^{l'_{{\bf s}}}(Z_{{\bf s}}))^{-1}(\bar{\calL})$
 do not have a base point on $F_{v,k,{\bf s}_{v,k}}$.

(2)    If for some fixed ${\bf s}$ the numbers
$\{d_{\calL_0,{\bf s}^{v,k}}\}_{v,k}$ are not the same, then $d_{\calL_0,{\bf s}} =
\max_{v, k}\{\,d_{\calL_0,{\bf s}^{v, k}}\}$.
 In the case when all the numbers $\{d_{\calL_0,{\bf s}^{v, k}}\}_{v,k}$ are the same,  then if this common value $d_{\calL_0, {\bf s}^{v, k}}$ equals $e_{\calL_0,{\bf s}}$,
 then $d_{\calL_0,{\bf s}} = e_{\calL_0,{\bf s}}  =d_{\calL_0,{\bf s}^{v, k}}$;
 otherwise $d_{\calL_0,{\bf s}} = d_{\calL_0,{\bf s}^{v, k}}+1$.
\end{theorem}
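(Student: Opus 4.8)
The plan is to mirror the structure of the (non-projected) first algorithm, Theorem \ref{th:ALGORITHM}, but carry everything through the $\calL_0$-projected Abel maps $c^{l'_{{\bf s}}}_{\calL_0}(Z_{{\bf s}})$ and the pairing $\langle\,,\,\rangle_{\calL_0}$ from \ref{ss:LauferD}. First I would establish part (1). The key geometric fact is that $\eca^{l'_{{\bf s}^{v,k}}}(Z_{{\bf s}^{v,k}})$ embeds as a codimension-one subvariety of $\eca^{l'_{{\bf s}}}(Z_{{\bf s}})$: a divisor on $\tX_{{\bf s}^{v,k}}$ with Chern class $l'_{{\bf s}^{v,k}}$ corresponds, after pushing forward by the last blow-up, to a divisor on $\tX_{{\bf s}}$ with Chern class $l'_{{\bf s}}$ that is required to pass through the (generic) center of the extra blow-up, i.e. to have a base point on $F_{v,k,{\bf s}_{v,k}}$. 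Since $c^{l'_{{\bf s}}}_{\calL_0}$ is just $c^{l'_{{\bf s}}}(Z_{{\bf s}})$ post-composed with the linear projection $s_{\calL_0}$, the image of the restricted family is either all of $\im(c^{l'_{{\bf s}}}_{\calL_0}(Z_{{\bf s}}))$ (when the generic fibre over $\bar\calL\in\im(c^{l'_{{\bf s}}}_{\calL_0}(Z_{{\bf s}}))$ already contains a divisor with a base point on $F_{v,k,{\bf s}_{v,k}}$) or a hypersurface inside it (when it does not); this gives the dichotomy $d_{\calL_0,{\bf s}}-d_{\calL_0,{\bf s}^{v,k}}\in\{0,1\}$ together with the stated criterion. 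The dimension bookkeeping here is the same as in \cite[\S 3.2]{NNI} and the proof of Theorem \ref{th:ALGORITHM}; the base-point reformulation is where Lemma \ref{lem:dualInt} will be invoked later.

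Next I would handle the easy half of (2): if the numbers $\{d_{\calL_0,{\bf s}^{v,k}}\}_{v,k}$ are not all equal, then $d_{\calL_0,{\bf s}}=\max_{v,k}d_{\calL_0,{\bf s}^{v,k}}$. By part (1), $d_{\calL_0,{\bf s}}$ is either the common value or one more; since it dominates every $d_{\calL_0,{\bf s}^{v,k}}$ and at least two of these differ, it must equal the maximum. So the real content is the case where all $d_{\calL_0,{\bf s}^{v,k}}$ equal a common value $d$, and we must decide between $d$ and $d+1$, with the answer dictated by whether $d=e_{\calL_0,{\bf s}}$. The bound $d_{\calL_0,{\bf s}}\le e_{\calL_0,{\bf s}}$ is automatic from \ref{ss:dimensions}, so if $d=e_{\calL_0,{\bf s}}$ then $d_{\calL_0,{\bf s}}=d$. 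The substantial direction is: if $d<e_{\calL_0,{\bf s}}$, i.e. $\im(c^{l'_{{\bf s}}}_{\calL_0}(Z_{{\bf s}}))$ does not yet fill its affine hull $A_{\calL_0,Z_{{\bf s}}}(l'_{{\bf s}})$, then $d_{\calL_0,{\bf s}}=d+1$, equivalently some direction $(v,k)$ gives $d_{\calL_0,{\bf s}^{v,k}}=d_{\calL_0,{\bf s}}-1$; by part (1) this means that for generic $\bar\calL$ the divisors in the fibre $(c^{l'_{{\bf s}}}_{\calL_0}(Z_{{\bf s}}))^{-1}(\bar\calL)$ do have a common base point on some $F_{v,k,{\bf s}_{v,k}}$.

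I would argue this by contradiction using the differential-forms picture. Suppose all directions give equality, i.e. for every $(v,k)$ and generic $\bar\calL$ the fibre has no base point on $F_{v,k,{\bf s}_{v,k}}$; equivalently, a generic divisor $D$ in $\eca^{l'_{{\bf s}}}(Z_{{\bf s}})$ mapping to $\bar\calL$ can be chosen to avoid all these components, hence (lifting to $\tX_{{\bf s}}$) to avoid the support of any prescribed finite set of $2$-forms. Then the hypotheses of Lemma \ref{lem:imcA} (in its $\calL_0$-projected form) are met: for every $\omega\in H^0(\tX_{{\bf s}},\Omega^2_{\tX_{{\bf s}}}(Z_{{\bf s}}))$ whose class lies in $\Omega_{Z_{{\bf s}}}(D_0)$ there is a section $s$ of $\bar\calL$ (regular, without fixed components) with $\mathrm{div}(s)\cap\mathrm{supp}(\omega)=\emptyset$; Lemma \ref{lem:dualInt} then forces $\omega$ to have no pole along the relevant $E_v$, so the only forms annihilating $\im T_D\widetilde c$ are those already in $\Omega_{Z_{{\bf s}}}(I_{{\bf s}})\cap\Omega_{Z_{{\bf s}}}(D_0)$, i.e. the orthogonal complement, inside $H^1(Z_{{\bf s}},\calL_0)$, of $T_{\bar\calL}\im(c^{l'_{{\bf s}}}_{\calL_0}(Z_{{\bf s}}))$ is exactly the defining complement of $A_{\calL_0,Z_{{\bf s}}}(l'_{{\bf s}})$. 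Hence $T_{\bar\calL}\im(c^{l'_{{\bf s}}}_{\calL_0}(Z_{{\bf s}}))=A_{\calL_0,Z_{{\bf s}}}(l'_{{\bf s}})$ and so $d_{\calL_0,{\bf s}}=e_{\calL_0,{\bf s}}$, forcing the common value $d$ to equal $e_{\calL_0,{\bf s}}$ — contradiction. Conversely, when $d=e_{\calL_0,{\bf s}}$ one checks directly that no direction can drop the dimension, since $\im(c^{l'_{{\bf s}}}_{\calL_0}(Z_{{\bf s}}))$ is already a full affine subspace and the restricted families, being codimension-one in $\eca$, still surject onto it (their images are affine subspaces of the same dimension by Theorem \ref{prop:AZ}(b) applied in the projected setting, together with the additivity $A_{\calL_0,Z}(l'_1+l'_2)=A_{\calL_0,Z}(l'_1)+A_{\calL_0,Z}(l'_2)$ induced from \ref{ss:dimensions}). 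I expect the main obstacle to be the careful verification that the genericity/base-point-freeness hypothesis needed to invoke Lemma \ref{lem:imcA} in the $\calL_0$-projected setting really follows from ``all directions give equality'', since the projection $s_{\calL_0}$ can collapse information; one must show that avoiding base points on the $F$'s is enough to avoid supports of all forms in the subspace $\Omega_{Z_{{\bf s}}}(D_0)$ (not all forms in $H^0(\Omega^2)$), which is exactly the subspace of forms relevant to the projected pairing, and that the parametrized version of the integral computation (\ref{eq:int}) goes through verbatim with $\calL_0$ twisting only the target vector space and not the local residue calculation.
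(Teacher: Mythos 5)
Your proposal is correct and follows essentially the same route as the paper's proof: part (1) via the supporting--point map and the codimension--one identification of $\eca^{l'_{{\bf s}^{v,k}}}(Z_{{\bf s}^{v,k}})$ inside $\eca^{l'_{{\bf s}}}(Z_{{\bf s}})$, the unequal case of (2) via the sandwich inequality from (1), and the equal case by picking a generic regular value $\bar{\calL}$, a divisor $D$ in its fibre avoiding ${\rm supp}(\omega)$, and invoking Lemmas \ref{lem:dualInt} and \ref{lem:imcA} to push any $\omega\in\Omega_{Z_{{\bf s}}}(D_0)$ annihilating $T_{\bar{\calL}}\im (c^{l'_{{\bf s}}}_{\calL_0}(Z_{{\bf s}}))$ into $\Omega_{Z_{{\bf s}}}(I_{{\bf s}})$. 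The one point the paper treats explicitly that you gloss over is the subcase ${\bf s}_{v,k}=0$, $a_v>1$ of part (1), where divisors meet $F_{v,k,0}=E_v$ in several points so the supporting--point map is not directly defined, and the paper instead restricts the quasifinite dominant addition map $\eca^{E^*_v}(Z_{{\bf s}})\times \eca^{l'_{{\bf s}}-E^*_v}(Z_{{\bf s}})\to \eca^{l'_{{\bf s}}}(Z_{{\bf s}})$ to a generic $q$--fibre in the first factor.
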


\begin{proof}  {\it (1)} Assume first that either $s_{v,k}\geq 1$ or $a_v=1$. Then
divisors from  $\eca^{l'_{{\bf s}}}(Z_{{\bf s}})$ intersect $F_{v,k,{\bf s}_{v,k}}$ by multiplicity one,
hence the intersection (supporting) point gives a map $q:\eca^{l'_{{\bf s}}}(Z_{{\bf s}})\to F_{v,k,{\bf s}_{v,k}}$, which is dominant. Moreover, $\eca^{l'_{{\bf s}^{v,k}}}(Z_{{\bf s}^{v,k}})$ is birational with a generic fiber of $q$ (the fiber over the point which was blown up), hence
the first statement  follows. Note also that
$ d_{\calL_0,{\bf s}} = d_{\calL_0,{\bf s}^{v, k}}$ if and only if the generic fiber of the
$\calL_0$--projected Abel map
$c_{\calL_0}^{l'_{{\bf s}}}$ is not included in a $q$--fiber.
This implies the second part of {\it (1)}.

%From this also follows that  for fixed $v$ and $k$,
%\begin{equation}\label{eq:dequal}
%(\, d_{\calL_0,{\bf s}} = d_{\calL_0,
%{\bf s}^{v, k}}\, ) \ \Leftrightarrow \
%\mbox{the generic $\calL\in \im (\widetilde{c}^{l'_{{\bf s}}})$
%has no basepoint along $F_{v,k,s_{v,k}}$}.
%\end{equation}
%On the other hand, $d_{{\bf s}} = d_{{\bf s}^{v, k}}+1$
%whenever the generic fiber of $c_{\calL_0}^{l'_{{\bf s}}}$ is included in a $q$--fiber.

If ${\bf s}_{v,k}=0$ and $a_v>1$ then write $l'_-:=l'_{{\bf s}}-E^*_v$ and
consider the `addition map'
$s: \eca ^{E^*_v}(Z_{{\bf s}})\times \eca ^{l'_-}(Z_{{\bf s}})\to
\eca ^{l'_{{\bf s}}}(Z_{{\bf s}})$, which is dominant and quasifinite (cf. \cite[Lemma 6.1.1]{NNI}).
Let $q:\eca ^{E^*_v}(Z_{{\bf s}})\to E_v$ be given by the supporting  point as before. Then if
$q^{-1}(gen)$ is a generic fiber of $q$ (above the point which was blown up), then the restriction of
$s$ to $q^{-1}(gen)\times \eca ^{l'_-}(Z_{{\bf s}})$
with target   $\eca^{l'_{{\bf s}^{v,k}}}(Z_{{\bf s}^{v,k}})$
is dominant and quasifinite.   Hence the arguments can be repeated.

{\it (2)}
First notice that if the numbers $\{d_{\calL_0,{\bf s}^{v, k}}\}$ are not the same
then from {\it (1)} we have
 $d_{\calL_0,{\bf s}} \leq  \min_{v, k}d_{\calL_0,{\bf s}^{v, k}} + 1
 \leq \max_{v, k}d_{\calL_0,{\bf s}^{v, k}}\leq d_{\calL_0,{\bf s}}$, hence
  $d_{\calL_0,{\bf s}} =  \max_{v, k}d_{\calL_0,{\bf s}^{v, k}}$.

 Next, assume that the numbers $\{ d_{\calL_0,{\bf s}^{v, k}}\}$ are the same, say $d$.

If $d_{\calL_0,{\bf s}}=d$
 then part {\it (1)} reads as follows:
 $d_{\calL_0,{\bf s}} = d_{\calL_0, {\bf s}^{v, k}}$ for all $v$ and $k$
 if and only if for a generic $\bar{\calL}\in
 \im ( c_{\calL_0}^{l'_{{\bf s}}}(Z_{{\bf s}}))$
 the set of divisors in
 $ (c_{\calL_0}^{l'_{{\bf s}}}(Z_{{\bf s}}))^{-1}(\bar{\calL})$
 do not have a base point on any of the curves  $\{F_{v,k,{\bf s}_{v,k}}\}_{v,k}$.

Let us choose a generic element $\bar{\calL} \in \im( c_{\calL_0}^{l'_{{\bf s}}}(Z_{{\bf s}}))$, which is in particular a regular value of  $c_{\calL_0}^{l'_{{\bf s}}}(Z_{{\bf s}})$
and the  generic divisors in $\eca^{l'_{{\bf s}}}(Z_{{\bf s}})$ mapped  to $\bar{\calL}$ are
in fact generic divisors of $\eca^{l'_{{\bf s}}}(Z_{{\bf s}})$ itself.

Next, take an element in $\Omega_{Z_{{\bf s}}}(D_0)$ (for details see \ref{ss:LauferD}) represented by a form $\omega$,  such that the class of $\omega $
vanish on $T_{\bar{\calL}}\im (c_{\calL_0}^{l'_{{\bf s}}}(Z_{{\bf s}}))$.

Then choose a generic $D$  from  $\eca^{l'_{{\bf s}}}(Z_{{\bf s}})$, which is mapped to $\bar{\calL}$ and which has no common points with the support of $\omega$ (we can even assume additionally that it is transversal and reduced).  Then we apply the previous statements for $\bar{\calL}:= c_{\calL_0}^{l'_{{\bf s}}}(Z_{{\bf s}})(D)$.

In  particular, the class of $\omega$ vanish on $\im (T_Dc_{\calL_0}^{l'_{{\bf s}}}(Z_{{\bf s}}))$
so $\omega $ cannot have pole along any of the curves  $\{F_{v,k,{\bf s}_{v,k}}\}_{v,k}$,
that is, it belongs to $\Omega_{Z_{{\bf s}}}(I_{{\bf s}})$, cf. Theorem \ref{th:Formsres} and
Lemma \ref{lem:dualInt}. Hence  $d_{\calL_0,{\bf s}} =e_{\calL_0, {\bf s}}$, cf. Lemma
\ref{lem:imcA}, and also
 $d=e_{\calL_0, {\bf s}} $  too.

 On the other hand if  $d=e_{\calL_0, {\bf s}} $,
then from    $d_{\calL_0, {\bf s}^{v, k}}\leq d_{\calL_0, {\bf s}}\leq  e_{\calL_0,{\bf s}}$ we get
$d= d_{\calL_0, {\bf s}}$. Hence    $d_{\calL_0,{\bf s}}=d$ if and only if
$d=e_{\calL_0,{\bf s}} $. Otherwise $d_{\calL_0,{\bf s}}$ should be $d+1$ by {\it (1)}.
\end{proof}

\subsection{Notations for the second algorithm}\label{ss:notII}
Consider the setup of \ref{ss:4.1} and combine it with the one from \ref{ss:setup}, where $\calL_0$
enters in the picture. Accordingly,  we have the following subspaces (inclusions):
$$\begin{array}{ccccccc}
\Omega_{Z_{{\bf s}}}(D_0)\cap \calG_{l_{{\bf s}}} & \to &
\Omega_{Z_{{\bf s}}}(D_0)\cap\Omega_{Z_{{\bf s}}}(I_{{\bf s}}) & \stackrel{j}{\longrightarrow} & \Omega_{Z_{{\bf s}}}(D_0) &=& H^1(Z, \calL_0)^*\\
\downarrow && \downarrow && \downarrow &&\\
\calG_{l_{{\bf s}}} & \to & \Omega_{Z_{{\bf s}}}(I_{{\bf s}}) & \stackrel{i}{\longrightarrow} & H^0(\Omega^2_{\tX_{{\bf s}}}(Z_{{\bf s}}))/ H^0(\Omega^2_{\tX_{{\bf s}}})&=& H^1(\calO_Z)^*
\end{array}$$
The codimension of the inclusion $i$ is $e_{{\bf s}}$ and the dimension of $\calG_{{\bf s}}$ is
$g_{{\bf s}}$ providing the inequality $e_{{\bf s}}\leq h^1(\calO_Z)-g_{{\bf s}}$.
Similarly, the codimension of $j$
 is $e_{\calL_0,{\bf s}}$ and the dimension of
 $\Omega_{Z_{{\bf s}}}(D_0)\cap \calG_{l_{{\bf s}}}$ will be denoted by
$g_{\calL_0, {\bf s}}$ providing the inequality
 $e_{\calL_0, {\bf s}}\leq h^1(Z,\calL_0)-g_{\calL_0, {\bf s}}$. Hence
 \begin{equation}\label{eq:ws2}
  d_{\calL_0, {\bf s}}\leq e_{\calL_0, {\bf s}}\leq h^1(Z,\calL_0)-g_{\calL_0, {\bf s}}.
 \end{equation}
 It is conveninent to lift the ${\bf s}$--independent subspace
 $\Omega_{Z_{{\bf s}}}(D_0)=\Omega_Z(D_0)$ of
 $H^0(\Omega^2_{\tX}(Z))/ H^0(\Omega^2_{\tX})$
 as $\Omega_{\tX}(D_0):=\pi^{-1}(\Omega_Z(D_0))$ by the projection $\pi:H^0(\Omega^2_{\tX}(Z))\to
  H^0(\Omega^2_{\tX}(Z))/ H^0(\Omega^2_{\tX})$.

\begin{theorem}\label{th:ALGORITHM4L} \
(1) $ d_{\calL_0, {\bf s}} -  d_{\calL_0, {\bf s}^{v, k}} \in \{0, 1\}$.

(2) If for some fixed ${\bf s}$ the numbers $\{d_{\calL_0, {\bf s}^{v, k}}\}_{v,k}$ are not the same,
then $d_{\calL_0, {\bf s}} = \max_{v, k}\{\,d_{\calL_0, {\bf s}^{v, k}}\}$.
 In the case when all the numbers $\{d_{\calL_0, {\bf s}^{v, k}}\}_{v,k}$ are the same,
 then if this common value $d_{\calL_0, {\bf s}^{v, k}}$ equals $h^1(Z, \calL_0)-g_{\calL_0,{\bf s}}$, then $d_{\calL_0, {\bf s}} =
h^1(Z,\calL_0)-g_{\calL_0,{\bf s}}  =d_{\calL_0, {\bf s}^{v, k}}$;  otherwise
$d_{\calL_0, {\bf s}} = d_{\calL_0, {\bf s}^{v, k}}+1$.
\end{theorem}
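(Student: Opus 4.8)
The plan is to deduce this from Theorem \ref{th:ALGORITHML}; the only real content is that the much simpler quantity $h^1(Z,\calL_0)-g_{\calL_0,{\bf s}}$ is again a legitimate test function for $d_{\calL_0,{\bf s}}$. Part \emph{(1)} is verbatim Theorem \ref{th:ALGORITHML}\emph{(1)}, and the first assertion of part \emph{(2)}, concerning the case when the numbers $\{d_{\calL_0,{\bf s}^{v,k}}\}_{v,k}$ are not all equal, is verbatim the corresponding assertion of Theorem \ref{th:ALGORITHML}\emph{(2)}. So assume those numbers share a common value $d$. By Theorem \ref{th:ALGORITHML}\emph{(2)} we already know $d_{\calL_0,{\bf s}}=d$ if $d=e_{\calL_0,{\bf s}}$ and $d_{\calL_0,{\bf s}}=d+1$ otherwise; combining this with the inequalities \eqref{eq:ws2}, $d\le e_{\calL_0,{\bf s}}\le h^1(Z,\calL_0)-g_{\calL_0,{\bf s}}$, we see that $d=h^1(Z,\calL_0)-g_{\calL_0,{\bf s}}$ forces all three integers to coincide, hence $d_{\calL_0,{\bf s}}=d$. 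Therefore the whole statement reduces to the implication: if $d_{\calL_0,{\bf s}}=d_{\calL_0,{\bf s}^{v,k}}$ for all $v,k$ (``no jump''), then $e_{\calL_0,{\bf s}}=h^1(Z,\calL_0)-g_{\calL_0,{\bf s}}$ --- because then the case $d<h^1(Z,\calL_0)-g_{\calL_0,{\bf s}}$ forces $d<e_{\calL_0,{\bf s}}$, and Theorem \ref{th:ALGORITHML}\emph{(2)} gives $d_{\calL_0,{\bf s}}=d+1$.

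For this key implication I would pass to the Laufer-dual picture of \ref{ss:notII}. There $e_{\calL_0,{\bf s}}$ is the codimension of $\Omega_{Z_{{\bf s}}}(D_0)\cap\Omega_{Z_{{\bf s}}}(I_{{\bf s}})$ in $\Omega_{Z_{{\bf s}}}(D_0)=H^1(Z,\calL_0)^*$, while $g_{\calL_0,{\bf s}}=\dim\big(\Omega_{Z_{{\bf s}}}(D_0)\cap\calG_{l_{{\bf s}}}\big)$; since $\calG_{l_{{\bf s}}}\subseteq\Omega_{Z_{{\bf s}}}(I_{{\bf s}})$ always, the claimed equality of integers is equivalent to the inclusion of linear subspaces $\Omega_{Z_{{\bf s}}}(D_0)\cap\Omega_{Z_{{\bf s}}}(I_{{\bf s}})\subseteq\calG_{l_{{\bf s}}}$. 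So I would fix a class $\omega$ in the left-hand side --- a $2$-form on $\tX$ with pole $\le Z$, lying in $\Omega_{Z_{{\bf s}}}(D_0)$, whose pullback to $\tX_{{\bf s}}$ has no pole along any top curve $F_{v,k,{\bf s}_{v,k}}$ --- and show that its pole order along every $E_v$ is at most $(l_{{\bf s}})_v=\min_k{\bf s}_{v,k}$. If this failed for some $v$, then, choosing a chain $k_0$ with ${\bf s}_{v,k_0}=(l_{{\bf s}})_v$ and running down that chain, the non-increasing sequence of pole orders of the successive pullbacks of $\omega$ would start strictly above ${\bf s}_{v,k_0}$ and end at $0$; since blowing up a point that is not a support point of a form lowers its pole order by exactly one, at some intermediate level $t<{\bf s}_{v,k_0}$ the pullback of $\omega$ would still have a pole along $F_{v,k_0,t}$ while the (fixed, generic) center blown up on $F_{v,k_0,t}$ is a support point of it, i.e. $\omega$ would be a ``coincidence form'' of the type discussed in \ref{ss:4.1}.

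The plan is to rule this out with the Abel map. Under the no-jump hypothesis, the proof of Theorem \ref{th:ALGORITHML}\emph{(2)} identifies $\Omega_{Z_{{\bf s}}}(D_0)\cap\Omega_{Z_{{\bf s}}}(I_{{\bf s}})$ with the annihilator inside $\Omega_{Z_{{\bf s}}}(D_0)$ of the tangent space to $\im(c^{l'_{{\bf s}}}_{\calL_0}(Z_{{\bf s}}))$ at a generic point; so the class of $\omega$ vanishes on $\im(T_D c^{l'_{{\bf s}}}_{\calL_0}(Z_{{\bf s}}))$ for a generic fibre divisor $D$. One then wants to turn the ``no base point on $F_{v,k,{\bf s}_{v,k}}$'' output of Theorem \ref{th:ALGORITHML}\emph{(1)} into base-point freedom also at the level of $F_{v,k_0,t}$, produce a deformation of a suitable divisor that detects the excess pole, and feed it into the residue-integral mechanism of \eqref{eq:int} and Lemma \ref{lem:dualInt} to reach a contradiction with the vanishing of $\omega$ on the image of the tangent map; this forces $\omega\in\calG_{l_{{\bf s}}}$ and proves the theorem. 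Finally, the closed formula $d_{\calL_0,Z}(l')=\min_{{\bf s}}\{|{\bf s}|+h^1(Z,\calL_0)-g_{\calL_0,{\bf s}}\}$ and the statement that \eqref{eq:twistedineq} is an equality for the generic element of $\im(c^{l'}_{\calL_0}(Z))$ follow from the two parts of the theorem by the decreasing induction already used in Corollary \ref{th:ALGORITHM3} and Corollary \ref{cor:formula2}.

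The step I expect to be the main obstacle is precisely this last transfer of base-point information from the top curves down to the intermediate levels of the blow-up chains. The difficulty is structural: divisors in $\eca^{l'_{{\bf s}}}(Z_{{\bf s}})$ meet the exceptional set only along the top curves $F_{v,k,{\bf s}_{v,k}}$, where $\omega$ has no pole at all, whereas the pole of $\omega$ sits on the older curves through the fixed blow-up centers --- so Lemma \ref{lem:dualInt} cannot be applied naively, and one must carefully orchestrate the genericity of the target $\bar{\calL}$, the genericity of the divisor inside its fibre, the quasi-finiteness of the addition and supporting-point maps \cite[Lemma 6.1.1]{NNI}, and the finiteness of ${\rm supp}(\omega)$, so that the relevant divisor component is simultaneously generic with respect to $\omega$ at every stage where $\omega$ carries a pole. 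This is exactly the ``subtle, hard to test'' comparison between the filtration piece $\calG_{l_{{\bf s}}}$ and the arrangement piece $\Omega_{Z_{{\bf s}}}(I_{{\bf s}})$ flagged in \ref{ss:4.1}: the content of the second algorithm is that the Abel map collapses this comparison to an equality on the relevant subspace exactly when the testing step demands it.
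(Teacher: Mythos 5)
Your reduction steps are all correct and match the paper: part \emph{(1)} and the unequal case of \emph{(2)} are quoted from Theorem \ref{th:ALGORITHML}, and the remaining content is correctly isolated as the implication that stability of the $d$'s (equivalently, via Theorem \ref{th:ALGORITHML}\emph{(2)} and (\ref{eq:ws2}), stability of the $e$'s) forces $e_{\calL_0,{\bf s}}=h^1(Z,\calL_0)-g_{\calL_0,{\bf s}}$. You also correctly translate this into the dual statement that every class in $\Omega_{Z_{{\bf s}}}(D_0)\cap\Omega_{Z_{{\bf s}}}(I_{{\bf s}})$ must lie in $\calG_{l_{{\bf s}}}$, and correctly identify the enemy: a ``coincidence form'' $\omega$ whose pole order along some $E_v$ exceeds $(l_{{\bf s}})_v$ yet whose pullback loses its pole by the top of the chain.

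The gap is that this enemy is never actually defeated. Your plan — feed a deformation of a divisor into the residue integral (\ref{eq:int}) and Lemma \ref{lem:dualInt} to contradict the vanishing of $\omega$ on $\im(T_Dc^{l'_{{\bf s}}}_{\calL_0}(Z_{{\bf s}}))$ — runs into exactly the obstruction you name yourself: the divisors of $\eca^{l'_{{\bf s}}}(Z_{{\bf s}})$ meet the exceptional set only along the top curves $F_{v,k,{\bf s}_{v,k}}$, where $\omega$ has no pole, so the integral mechanism detects nothing about the excess pole sitting on the older curves. Flagging a step as ``the main obstacle'' without resolving it leaves the theorem unproved, and the paper does \emph{not} resolve it along these lines. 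Instead it abandons the Abel map entirely at this stage and argues purely with forms: writing $b_u$ for the vanishing orders of $\omega$ along the chain $v_0=v,\dots,v_t$ (with $t=(l_{{\bf s}})_v={\bf s}_{v,i}$), it takes the first index $r$ with $b_{v_r}+t-r\geq 0$, observes $b_{v_r}-b_{v_{r-1}}\geq 2$, views $\omega$ on the intermediate resolution $\tX'$ (blown up only $r-1$ times in that chain) as a section of $\Omega^2_{\tX'}(-\sum_u b_uE_u)$ not vanishing along $E_{v_{r-1}}$ and with multiplicity $m=b_{v_r}-b_{v_{r-1}}-1\geq 1$ at the generic center, and then invokes the technical Lemma \ref{lem:techn} (proved in section \ref{s:techn}, and applicable to the subspace $V=\Omega_{\tX}(D_0)$) to manufacture a \emph{new} form $\omega'\in\Omega_{\tX}(D_0)$ realizing the intermediate multiplicity $k=-(b_{v_{r-1}}+t-r+1)-1$. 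This $\omega'$ still has a pole along $F_{v,i,{\bf s}_{v,i}}$ but none one blow-up later, which directly contradicts $e_{\calL_0,{\bf s}^{v,i}}=e_{\calL_0,{\bf s}}$. So the missing ingredient in your argument is precisely this perturbation-of-sections lemma; without it (or a substitute), the case where $\omega$'s pole is killed prematurely by the fixed generic centers cannot be excluded.
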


\begin{proof}
Part {\it (1)} was already proved in Theorem \ref{th:ALGORITHML}. Regarding part {\it (2)},
if the numbers $\{d_{\calL_0, {\bf s}^{v, k}}\}$ are not the same then we argue again as in
the proof of Theorem \ref{th:ALGORITHML}.

 Next, assume that the numbers $\{ d_{\calL_0, {\bf s}^{v, k}}\}$ are the same, say $d$.
 Via (\ref{eq:ws2}) and  the first algorithm Theorem
 \ref{th:ALGORITHML}  we need to show that if $d=e_{\calL_0,{\bf s}}$ then necessarily  $d=h^1(Z,\calL_0)-g_{\calL_0, {\bf s}}$ as well.
 However, if $d=e_{\calL_0,{\bf s}}$ then we have $e_{\calL_0,{\bf s}}=
 d_{\calL_0,{\bf s}^{v,k}}$ for all $(v, k)$, hence by
 (\ref{eq:ws2}) we get $e_{\calL_0,{\bf s}}=d=d_{\calL_0, {\bf s}^{v,k}}\leq e_{\calL_0,
 {\bf s}^{v,k}}$. But $e_{\calL_0, {\bf s}}\geq  e_{\calL_0, {\bf s}^{v,k}}$
 by the combination of the argument from (\ref{eq:ineqes}) and the diagram
 from \ref{ss:notII}.  Hence,  $ d_{\calL_0, {\bf s}^{v, k}}=e_{\calL_0, {\bf s}}$ for all $k$
  and $v$
   implies $e_{\calL_0, {\bf s}^{v,k}}= e_{\calL_0, {\bf s}}$ for all $v$ and $k$.

In particular, it is enough to verify the (stronger statement):
\begin{equation}\label{eq:*}\mbox{ if
$e_{\calL_0, {\bf s}^{v,k}}= e_{\calL_0, {\bf s}}$ for all $v$ and $k$ then
 $e_{\calL_0, {\bf s}}= h^1(Z,\calL_0)-g_{\calL_0, {\bf s}}$ as well.}
\end{equation}

Assume that (\ref{eq:*})  is not true, that is,
$e_{\calL_0, {\bf s}^{v,k}}= e_{\calL_0, {\bf s}}$ for all $v$ and $k$, but
 $e_{\calL_0, {\bf s}} <  h^1(Z,\calL_0)-g_{\calL_0, {\bf s}}$.
The last inequality via the diagram from \ref{ss:notII}
%(\ref{eq:6.2b}) (or \ref{ss:4.1})
says that the inclusion  $\Omega_{Z_{{\bf s}}}(D_0)\cap \calG_{l_{{\bf s}}}\subset
\Omega_{Z_{{\bf s}}}(D_0)\cap\Omega_{Z_{{\bf s}}}(I_{{\bf s}})$ is strict.
This means, that there is a  differential form $\omega\in
\Omega_{\tX}(D_0)$,
with class $[\omega]$ in $H^0(\Omega_{\tX}^2(Z))/H^0(\Omega_{\tX}^2)
\subset H^0(\tX\setminus E,\Omega^2_{\tX})/ H^0(\tX,\Omega^2_{\tX})$,
such that $\omega$ does not have a pole along  the  exceptional
divisor $F_{v, k, {\bf s}_{v, k}}$, however  $[\omega] \notin \calG_{{\bf s}}$.
In particular, there exists
 a vertex $v \in |l'|$, such that the pole order  of
 $\omega$ along  $E_v$ is larger than $(l_{\bf s})_v$.
Notice that this also means $(l_{\bf s})_v = \min_{1 \leq i \leq a_v}{\bf s}_{v, i} < Z_v$.

Let $1 \leq i \leq a_v$ be an integer such that
${\bf s}_{v, i} = (l_{\bf s})_v $ (abridged in the sequel by $t$)
and we  denote the order of vanishing of $\omega$ on an arbitrary exceptional divisor $E_u$ by $b_u$, where
$u$ is an arbitrary vertex along the blowing up procedure.
Next we focus on the string between $v$ and $ w_{v,i,{\bf s}_{v,i}}$ and we denote them by $v_0 = v, \ldots, v_t = w_{v,i,{\bf s}_{v,i}}$.  Set $r:=\min\{ 0 \leq s \leq t \,:\,
 b_{v_s} + t-s \geq 0\}$. Since for $s=t$ one has  $b_{v_t}  \geq 0$ (since
 $\omega$ has no pole along $ F_{v, i, {\bf s}_{v, i}}$) $r$ is well--defined.
On the other hand we have $r\geq 1$. Indeed,
$b_{v_0} + t < 0$, since pole order of $\omega$ along  $E_v$ is  higher than $(l_{\bf s})_v=t$.
Note that  $b_{v_{r-1}} + t-r+1 < 0$ and $b_{v_r}+t-r\geq 0$ imply $b_{v_r}-b_{v_{r-1}}\geq 2$ ($\dag$).

Let $\tX'$ be that resolution obtained from $\tX$,   as an intermediate step of the tower between
$\tX$ and $\tX_{{\bf s}}$,  when in the $(v,i)$ sequence of blow ups we  do not proceed all
${\bf s}_{v,i}$ of them, but we create only the divisors $\{F_{v,i,k}\}_{k\leq r-1}$. Let $\calv'$ be its vertex set and $\{E_u\}_{u\in \calv'}$ its exceptional divisors. On $\tX'$ consider
 the line bundle $\calL:=\Omega^2_{\tX'}(-\sum_{u\in\calv'} b_{u} E_{u})$.
Since $F_{v,i,v_r}$ was  created by blowing up a {\it generic point} $p$ of $E_{v_{r-1}}=F_{v,i,v_{r-1}}$,
the existence of $\omega $ guarantees the existence of a section
 $s \in H^0(\tX', \mathcal{L})$, which does not vanish along $E_{v_{r-1}}$ and
 it has multiplicity $m:=b_{v_r}-b_{v_{r-1}}-1$ at
 the generic point $p\in E_{v_{r1}}$. By ($\dag$) $m\geq 1$.
 By construction, $\omega$ (or $s$) belongs also to the subvectorspace $\Omega_{\tX}(D_0)$ after certain identifications.

Now by the technical Lemma \ref{lem:techn}
(valid for general line bundles, and  separated in section \ref{s:techn})
for any $0 \leq k < m$ and a generic point $p \in E_{v_{r-1}}$ there exists a section $s' \in H^0(\tX', \mathcal{L})$, which
does not vanish along  the exceptional divisor $E_{v_{r-1}}$, and the divisor of $s'$
has  multiplicity $k$ at $p$. We
 apply  for $k = -(b_{v_{r-1}} + t-r+1)-1$. (Note that $0\leq k<m$.)
The section $s'$ gives a differential form $\omega' \in \Omega_{\tX}(D_0)$,
such that if we blow up $E_{v_{r-1}}$ in the generic point $p$ and we denote the new exceptional
divisor by $E_{v_{r, new}}$, then $\omega'$ has wanishing order
$-(t-r +1)$ on $E_{v_{r, new}}$.
This means, that if we blow up it in generic points $t-r+ 1$ times, then $\omega'$ has
 a pole on $E_{v_{t, new}}$, but has no pole on $E_{v_{t + 1, new}}$.
This means that $e_{\calL_0, {\bf s}^{v,i}} \neq e_{\calL_0, {\bf s}}$, which is a contradiction.
\end{proof}
The analogues of Corollaries \ref{cor:formula2} and \ref{cor:formula3} (with similar proofs)
are:
\begin{corollary}\label{cor:formL}
For any $l'\in -\calS'$, $Z\geq E$ and $\calL_0$ with $H^0(Z,\calL_0)_{reg}\not=\emptyset$
one has
$$d_{\calL_0,Z}(l')=\min_{{\bf s}} \{\, |s|+h^1(Z,\calL_0)-g_{\calL_0,{\bf s}}\,\}=
\min_{0\leq Z_1\leq Z} \{\, (l', Z_1) + h^1(Z,\calL_0)-h^1(Z_1,\calL_0)\}.
$$
This combined with (\ref{eq:codimtwisted}) gives for a
 generic $\calL^{im}_{gen}\in \im (c^{l'}(Z))$:
$$h^1(Z,\calL_0\otimes\calL^{im}_{gen})=
\max_{0\leq Z_1\leq Z} \{\, h^1(Z_1,\calL_0)- (l', Z_1)\}.
$$
\end{corollary}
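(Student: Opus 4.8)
The plan is to follow verbatim the proofs of Corollaries \ref{th:ALGORITHM3} and \ref{cor:formula3}, replacing the test function $e_{{\bf s}}$ (resp. $h^1(\calO_Z)-g_{{\bf s}}$) by its $\calL_0$--projected counterpart $e_{\calL_0,{\bf s}}$ (resp. $h^1(Z,\calL_0)-g_{\calL_0,{\bf s}}$) and using Theorem \ref{th:ALGORITHM4L} in place of Theorem \ref{th:ALGORITHM4}. First I would establish the closed formula $d_{\calL_0,Z}(l')=\min_{{\bf s}}\{|{\bf s}|+h^1(Z,\calL_0)-g_{\calL_0,{\bf s}}\}$. For the inequality $\leq$: by Theorem \ref{th:ALGORITHM4L}(1), iterating along a chain from ${\bf s}$ to any $\widetilde{\bf s}\geq {\bf s}$ gives $d_{\calL_0,{\bf s}}-d_{\calL_0,\widetilde{\bf s}}\leq |\widetilde{\bf s}-{\bf s}|$, and (\ref{eq:ws2}) gives $d_{\calL_0,\widetilde{\bf s}}\leq h^1(Z,\calL_0)-g_{\calL_0,\widetilde{\bf s}}$; adding these yields $d_{\calL_0,{\bf s}}\leq |\widetilde{\bf s}-{\bf s}|+h^1(Z,\calL_0)-g_{\calL_0,\widetilde{\bf s}}$ for every $\widetilde{\bf s}\geq{\bf s}$. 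For the reverse inequality I would run the decreasing induction of Corollary \ref{th:ALGORITHM3}: set ${\bf s}_0:={\bf s}$, and whenever there is a pair $(v,k)$ with $d_{\calL_0,{\bf s}_i}=d_{\calL_0,({\bf s}_i)^{v,k}}+1$ pass to ${\bf s}_{i+1}:=({\bf s}_i)^{v,k}$. Since each $d_{\calL_0,{\bf s}}\geq 0$ this stops at some ${\bf s}_t$ with $d_{\calL_0,{\bf s}_t}=d_{\calL_0,({\bf s}_t)^{v,k}}$ for all $(v,k)$; then Theorem \ref{th:ALGORITHM4L}(2) forces $d_{\calL_0,{\bf s}_t}=h^1(Z,\calL_0)-g_{\calL_0,{\bf s}_t}$, and as $d_{\calL_0,{\bf s}_t}=d_{\calL_0,{\bf s}}-|{\bf s}_t-{\bf s}|$ the minimum is attained at $\widetilde{\bf s}={\bf s}_t$. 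Taking ${\bf s}={\bf 0}$ gives the formula for $d_{\calL_0,Z}(l')$.

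Next, to rewrite this in terms of cycles $0\leq Z_1\leq Z$, I would invoke the identification from \ref{ss:filtrL}, $\dim(\Omega_Z(D_0)\cap\calG_l)=h^1(l,\calL_0)$ for $0<l\leq Z$, which via \ref{ss:notII} reads $g_{\calL_0,{\bf s}}=\dim(\Omega_{Z_{{\bf s}}}(D_0)\cap\calG_{l_{{\bf s}}})=h^1(l_{{\bf s}},\calL_0)$. Since $|{\bf s}|\geq \sum_v a_v\,(l_{{\bf s}})_v=(l',l_{{\bf s}})$ and $0\leq l_{{\bf s}}\leq Z$, the minimum over ${\bf s}$ is $\geq$ the minimum over $0\leq Z_1\leq Z$; conversely every such $Z_1$ equals $l_{{\bf s}}$ for a suitable ${\bf s}$ with $|{\bf s}|=(l',l_{{\bf s}})$, giving the opposite inequality. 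This is exactly the argument of Corollary \ref{cor:formula3}.

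Finally, for the cohomological consequence, I would combine the previous step with (\ref{eq:codimtwisted}). Because the projection $s^0_{\calL_0}:\pic^{l'}(Z)\to \pic^{l'}_{\calL_0}(Z)$ is an affine surjection onto the ambient affine space of $\im(c^{l'}_{\calL_0}(Z))$, a generic $\calL^{im}_{gen}\in\im(c^{l'}(Z))$ maps to a generic element of $\im(c^{l'}_{\calL_0}(Z))$; hence (\ref{eq:codimtwisted}) gives $h^1(Z,\calL_0\otimes\calL^{im}_{gen})={\rm codim}\,\im(c^{l'}_{\calL_0}(Z))=h^1(Z,\calL_0)-d_{\calL_0,Z}(l')$. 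Substituting the formula just obtained and distributing the minus sign over the minimum produces $\max_{0\leq Z_1\leq Z}\{h^1(Z_1,\calL_0)-(l',Z_1)\}$.

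I do not expect a genuine obstacle, since the whole argument is structurally identical to the untwisted case, the two algorithms having been stated in a deliberately parallel form. The only points deserving care are (i) the bookkeeping of \ref{ss:filtrL}--\ref{ss:notII} that identifies $g_{\calL_0,{\bf s}}$ with $h^1(l_{{\bf s}},\calL_0)$, and (ii) the remark that $s^0_{\calL_0}$ carries generic members of $\im(c^{l'}(Z))$ to generic members of $\im(c^{l'}_{\calL_0}(Z))$, which is what legitimizes the appeal to (\ref{eq:codimtwisted}).
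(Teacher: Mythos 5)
Your proposal is correct and follows exactly the route the paper intends: the paper dispatches this corollary with the remark that the proofs are "similar" to Corollaries \ref{cor:formula2} and \ref{cor:formula3}, and your write-up is precisely that transcription, using Theorem \ref{th:ALGORITHM4L} in place of Theorem \ref{th:ALGORITHM4}, the identification $g_{\calL_0,{\bf s}}=h^1(l_{{\bf s}},\calL_0)$ from \ref{ss:filtrL}--\ref{ss:notII}, and (\ref{eq:codimtwisted}) for the cohomological consequence. No gaps.
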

\begin{example}\label{ex:super2}
This is a continuation of Example \ref{ex:superisol} (based on \cite[\S 11]{NNI}), whose notations and statements we will use. Assume that $Z\gg 0$ and $l'=-kE_0^*$ as in \ref{ex:superisol}.
Additionally we take a generic line bundle $\calL_0$ with $c_1(\calL_0)=l'_0=-k_0E_0^*$, $k_0\geq 0$,
(hence $\widetilde{D}_0$ consists of $k_0$ generic irreducible cuts of $E_0$).
Recall that $H^0(\Omega^2_{\tX}(Z))/H^0(\Omega^2_{\tX})$ admits a basis consisting of elements
of type ${\bf x}^{{\bf m}}\omega$, where $\omega$ is the Gorenstein form and $0\leq |{\bf m}|
\leq d-3$. Each `block' $\{|{\bf m}|=j\}$ ($0\leq j\leq d-3$) (which can be identified with
$H^0({\mathbb P}^2, \calO(j))$) contributes with $\binom{j+2}{2}$
monomials. The $k_0$ generic divisors  impose $\min\{k_0, \binom{j+2}{2}\}$ independent
conditions (see \cite[11.2]{NNI} for the explication), hence
the block $\{|{\bf m}|=j\}$ ($0\leq j\leq d-3$) contributes into $\dim \Omega_Z(D_0)=h^1(\calL_0)$
with  $\binom{j+2}{2}-\min\{k_0, \binom{j+2}{2}\}=\max\{0, \binom{j+2}{2}-k_0\}$.
In particular, $h^1(\calL_0)=\sum_{j=0}^{d-3}\max\{0, \binom{j+2}{2}-k_0\}$ and
$h^1(\calL_0)-g_{\calL_0,s}=\sum_{j=0}^{d-3-s}\max\{0, \binom{j+2}{2}-k_0\}$ ($0\leq s\leq d-2$).
Therefore,
$$d_{\calL_0,Z}(-kE^*_0)=\min_{0\leq s\leq d-2} \, \Big\{ ks +
\sum_{j=0}^{d-3-s}\max\big\{0, \textstyle{\binom{j+2}{2}}-k_0\big\}\,\Big\}.$$
However, if $\calL_0=\calO_Z(D_0)$ is not generic, then the points
 $D_0$  might fail to impose independent conditions on the corresponding linear systems, and the determination of the dimsnion of $\Omega _Z(D_0)$ can be harder.
 See \cite[11.3]{NNI} for discussion, examples and connection with the Cayley--Bacharach type theorems (cf. \cite{EH}). Those discussions with
combined with  the present section produces further examples for $d_{\calL_0,Z}(l')$ whenever $D_0$
is special (and $(X,o)$ is superisolated).

\end{example}

\section{Appendix 2. A technical lemma}\label{s:techn}

\subsection{} The next lemma is used  in the body of the article,
however,  it might have also an independent general interest.
\begin{lemma}\label{lem:techn}
Let $\tX$ be an arbitrary resolution of a normal surface singularity $(X, 0)$.
Let us  fix an arbitrary line bundle $\mathcal{L}\in {\rm Pic}(\tX)$
with $c_1(\mathcal{L}) = l' \in -S'$, an  irreducible exceptional curve $E_v$, and an integer $m>0$.

Assume that there exists a sub-vectorspace $V \subset H^0(\tX, \mathcal{L})$ with the following property:
for a generic point $p \in E_v$ there exists a section $ s \in V$
such that $s$ does not vanish along  $E_v$ and the multiplicity of the divisor of $s$ at $ p \in E_v$ is $m$. Then for any number $0 \leq k \leq m$ and a generic point $p \in E_v$ there exists a section
 $s \in  V$ such that $s$ does not vanish
along  $E_v$ and the multiplicity of the divisor of $s$ at $ p \in E_v$ is $k$.
\end{lemma}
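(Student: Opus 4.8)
The plan is to induct on $m$. For $m\le 1$ there is nothing beyond producing multiplicity $0$, which is available at a generic point of $E_v$ as soon as the hypothesis is non‑vacuous: a fixed section $s\in V\setminus V'$ (where $V':=V\cap H^0(\widetilde X,\mathcal L(-E_v))$, and $V'\subsetneq V$ by hypothesis) has $\operatorname{mult}_p\operatorname{div}(s)=0$ for all $p$ off the finite set $\operatorname{div}(s)\cap E_v$. For the inductive step it is enough to show: \emph{if multiplicity $m$ is achieved at a generic $p\in E_v$, then so is multiplicity $m-1$} — the inductive hypothesis then supplies $0,1,\dots,m-1$ and $m$ is given. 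Fix a generic $q\in E_v$ and local coordinates $(u,w)$ at $q$ with $E_v=\{u=0\}$, and write $\mathfrak m=\mathfrak m_q$. A section $s\in V$ with $E_v\not\subseteq\operatorname{div}(s)$ has $\operatorname{mult}_q\operatorname{div}(s)=k$ iff its local equation lies in $\mathfrak m^k\setminus\mathfrak m^{k+1}$; and, since at a $q$ carrying a multiplicity‑$m$ section $s_q\notin V'$ one has $V\cap\mathfrak m^k\mathcal L\not\subseteq V'$ for all $k\le m$, the "union of two subspaces" argument gives that multiplicity $k$ ($0\le k\le m$) is achievable at $q$ \emph{iff} $\dim(V\cap\mathfrak m^k\mathcal L)>\dim(V\cap\mathfrak m^{k+1}\mathcal L)$. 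So the inductive step reduces to excluding $\dim(V\cap\mathfrak m^{m-1}\mathcal L)=\dim(V\cap\mathfrak m^m\mathcal L)$ at generic $q$.

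\textbf{Differentiation along $E_v$.} The locus of $q$ where multiplicity $m$ is achieved is constructible and dense, hence contains a dense open $U\subseteq E_v$; by generic smoothness on the incidence variety one obtains an algebraic family $q\mapsto s_q\in V\setminus V'$ with $\operatorname{mult}_q\operatorname{div}(s_q)=m$. Parametrizing $E_v$ near $q_0\in U$ by $w$, writing the moving point as $p(t)=(0,t)$ and the local equation of $\operatorname{div}(s_{p(t)})$ as $f_t(u,w)$, an induction on $r$ via the chain rule shows that the section $\partial_t^r s_{p(t)}\in V$ has, as local equation expanded at $p(t)$, the series $(\partial_t-\partial_{w'})^r g_t$, where $g_t(u,w'):=f_t(u,t+w')\in\mathfrak m^m\setminus\mathfrak m^{m+1}$. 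Since $\partial_t$ preserves $\mathfrak m^m$ and $\partial_{w'}$ lowers the order by one, $\partial_t s_{p(t)}$ has multiplicity $m-1$ at $p(t)$ whenever the degree‑$m$ leading form of $g_t$ involves $w'$; and a short "moving divisor" argument (if $\partial_t s_{p(t)}\in V'$ along the family, the $V/V'$‑class of $s_{p(t)}$ is constant, forcing $\operatorname{mult}$ $0$ at the moving point) shows this derivative stays in $V\setminus V'$. Hence, were the leading form of $g_t$ to involve $w'$ at generic $q$, multiplicity $m-1$ would be achieved at generic $q$, contradicting the equality we are excluding. So under that equality, at generic $q$ \emph{every} multiplicity‑$m$ section in $V\setminus V'$ has leading form a pure power of $u$: $\operatorname{div}(s_q)$ is maximally tangent to $E_v$, and its intersection order $\nu_q(s):=(\operatorname{div}(s)\cdot E_v)_q$ satisfies $\nu_q(s_q)\ge m+1$.

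\textbf{The tangent case (the crux).} Here the plan is a bootstrap on the order of vanishing along $E_v$. Put $W_q:=V\cap\mathfrak m^{m-1}\mathcal L=V\cap\mathfrak m^m\mathcal L$. Because no multiplicity‑$m$ section in $V\setminus V'$ is non‑tangent, every $s\in W_q$ (either $s\in V'$, or $\operatorname{mult}_q s\ge m$ with $\nu_q(s)=m$ impossible) has $\nu_q(s)\ge m+1$. I then claim: if $W_q\subseteq\{\nu_q\ge d\}$ for generic $q$ with $d\ge m+1$, then $W_q\subseteq\{\nu_q\ge d+1\}$ for generic $q$. Fixing a generic $q_0$ and $0\ne s_0\in W_{q_0}$ (may assume $s_0\notin V'$, else $\nu_{q_0}(s_0)=\infty$): the subspaces $W_{q(t)}$ form a vector subbundle near $q_0$, so $s_0$ extends to a family $w(t)\in W_{q(t)}$, and the set of possible derivatives $\partial_t w(t)|_0$ is a coset of $W_{q_0}\not\subseteq V'$, so we may take $w'(0)\notin V'$. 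By the differentiation identity $w'(0)\in V\cap\mathfrak m_{q_0}^{m-1}\mathcal L=W_{q_0}$, hence $\nu_{q_0}(w'(0))\ge d$. Restricting to $E_v$ and using the \emph{clean} one‑variable differentiation identity for the linear series $\bar V:=\operatorname{im}(V\to H^0(E_v,\mathcal L|_{E_v}))$ — where, there being a single direction, differentiating a family of order $N$ drops the order by exactly one — the alternative "$\operatorname{ord}_{p(t)}(w(t)|_{E_v})=d$ generically" would give $\operatorname{ord}_{q_0}(w'(0)|_{E_v})=d-1<d$ (since $w'(0)|_{E_v}\ne 0$), contradicting $w'(0)\in W_{q_0}$; so $\operatorname{ord}_{p(t)}(w(t)|_{E_v})\ge d+1$ generically, and upper semicontinuity of the order yields $\nu_{q_0}(s_0)\ge d+1$. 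Iterating, $W_q\subseteq\{\nu_q\ge m+1+\ell\}$ for every $\ell$, at generic $q$; but $\nu_q(s)\le(\operatorname{div}(s)\cdot E_v)$ is bounded independently of $s$ for $s\notin V'$, so $W_q\subseteq V'$ — contradicting $s_q\in W_q\setminus V'$. This forces $\dim(V\cap\mathfrak m^{m-1}\mathcal L)>\dim(V\cap\mathfrak m^m\mathcal L)$ and closes the induction.

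\textbf{Expected main obstacle.} The genuinely delicate point is exactly this tangent case: one must track the $2$‑dimensional multiplicity of $\operatorname{div}(s)$ at $p$ and the $1$‑dimensional order of vanishing along $E_v$ \emph{simultaneously}, arrange the family‑extension/differentiation bootstrap, and make it terminate through the intersection bound $\nu_q(s)\le(\operatorname{div}(s)\cdot E_v)$. Genericity of $p$ is used throughout — no fixed section of $V\setminus V'$ is tangent to $E_v$ at a generic point, and the constructible loci in play are dense open there — and the bookkeeping with the operators $\partial_t,\partial_{w'}$ on the $\mathfrak m$‑adic filtration, together with the compatibility of restriction to $E_v$ with differentiation, is what makes the bootstrap run.
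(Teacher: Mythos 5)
Your argument is correct and reaches the same intermediate stage as the paper's proof --- the reduction by induction to producing multiplicity $m-1$, and the construction of an analytic family $t\mapsto s_{p(t)}$ of multiplicity-$m$ sections at a moving generic point of $E_v$ --- but from there the two proofs genuinely diverge. The paper argues by interpolation: it takes generic linear combinations $\sum_{l}\alpha_l h(t_l)$ of members of the family at many generic parameters, imposes multiplicity $\ge m-1$ at a fresh generic point $q$ as a linear system, and excludes an accidental jump to multiplicity $\ge m$ by a determinant identity whose two sides would have different vanishing orders in $(q-t)$; tangency of the family to $E_v$ is absorbed into the choice of the distinguished row indexed by $(m-1-i_0,i_0)$, where $i_0$ realizes the minimal total degree $d$ with $c_{j_0,i_0}\not\equiv 0$. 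You instead differentiate the family in the parameter: $\partial_t s_{p(t)}$ stays in $V$ and has multiplicity exactly $m-1$ unless the degree-$m$ leading form is a pure power of the local equation of $E_v$, and you eliminate that maximally tangent case by a separate bootstrap on the order of vanishing along $E_v$ (one-variable differentiation of the restricted family, upper semicontinuity of the order, termination via the uniform bound $\nu_q(s)\le\deg(\mathcal{L}|_{E_v})=(l',E_v)$ for $s\notin V'$). Your route makes the geometric content more visible --- only maximal tangency to $E_v$ can obstruct lowering the multiplicity, and tangency cannot persist because of the intersection number with $E_v$ --- at the price of a two-stage case analysis, whereas the paper's computation is less transparent but handles both cases uniformly. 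One point you should make explicit to be airtight: $V\subset H^0(\tX,\mathcal{L})$ need not be finite dimensional, so the constructibility, incidence-variety and subbundle arguments (the family $q\mapsto s_q$, the bundle $q\mapsto W_q$, the coset of admissible derivatives) must be run on the image of $V$ in $H^0(NE_v,\mathcal{L})$ for $N\gg m$; this loses no information about multiplicities at points of $E_v$ or orders along $E_v$, and it is exactly the reduction the paper performs at the start of its proof.
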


\begin{proof}
By induction we need to prove the statement only  for  $k= m-1$.

First we fix a very large integer  $N\gg m$, and consider the
restriction
$r:H^0(\tX,{\mathcal L})\to H^0(NE_v,{\mathcal L})$. Then $r$ induces a map  from
$H^0(\tX,{\mathcal L})_{reg}:=H^0(\tX,{\mathcal L})\setminus H^0(\tX,{\mathcal L}(-E_v))$ to $H^0(NE_v,{\mathcal L})_{reg}:= H^0(NE_v,{\mathcal L})\setminus H^0((N-1)E_v,{\mathcal L}(-E_v))$.
Denote its restriction
$H^0(\tX,{\mathcal L})_{reg}\cap V\to H^0(NE_v,{\mathcal L})_{reg}\cap r(V)$ by $ r_V$.
Consider also the  natural map ${\rm div}: H^0(N E_v, \mathcal{L})_{reg} \to \eca^{l'}(N E_v)$, and
 the composition map $ {\rm div} \circ r_V = g : H^0(\tX, \mathcal{L})_{reg} \cap V \to \eca^{l'}(N E_v)$, which sends a section to its divisor restricted to the cycle $N E_v$.

Next, for any $p\in E_v^0:=E_v\setminus \cup_{u\not=v}E_u$ set $D_{m,p}
\subset \eca^{l'}(N E_v)$, the set of divisors with multiplicity $m$ at $p$. (Since
$N\gg m$ this notion is well--defined).  Set also $D_m:= \cup_p D_{m,p}$.

By the assumption, the image of $g$ intersects $D_{m,p}$ for any generic $p$.
Since $D_m$ is constructible  subvariety of $ \eca^{l'}(N E_v)$, $g^{-1}(D_m)$ is a
 nonempty constructible subset of $H^0(\tX, \mathcal{L})_{reg}\cap V$.
 Define an analytic curve $h_0:(-\epsilon,\epsilon)\to g^{-1}(D_m)$ such that its image is not
 a subset of some $g^{-1}(D_{m,p})$. Let us denote the zeros of  the section $h_0(0)$ along
 $E_v^0$ by $\{p_1, \ldots, p_r\}$. Then there exists a small neighborhood $U$ of one of the
 points $p_i$ and a restriction of $h_0$ to some smaller $(-\epsilon',\epsilon')$, such that
 for any $t\in (-\epsilon',\epsilon')$ the restriction of $h_0(t)$ to $U$ has a unique zero,
 say $p(t)$, and its multiplicity is $m$. Furthermore, $t\mapsto p(t)$, $ (-\epsilon',\epsilon')\to U\cap  E_v^0$ is not constant, hence taking further restrictions to some interval
 we can assume that $t\mapsto p(t)$ is locally invertible. Reparametrising $h_0$ by the inverse of this map, we obtain an analytic map $U\cap E_v^0\to g^{-1}(D_m)$, $t\mapsto h(t)$ such that the restriction of the section $h(t)$ to some local chart $U$ has only one zero, namely $t$, and the multiplicity of the section at $t$ is $m$. In some local coordinates $(x,y)$ of $U$
 (with $U\cap E_v=\{y=0\}$) the equation of $h(t)$ has the form  (modulo $y^N$)
\begin{equation}\label{eq:h}
h(t) = \sum_{j\geq 0 ,  i\geq 0}(x-t)^j y^i c_{j, i}(t),
\end{equation}
where by the multiplicity  condition  $c_{j, i}\equiv 0$, if $j + i < m$ and,
there is a pair $(j, i)$,
such that $j + i = m$ and $c_{j, i}(t)\not \equiv 0$. Moreover, by the non--vanishing condition
 $y\not| h(t)$, or,  $c_{j, 0}(t)\not \equiv 0$ for some $j$.

% Summarized: for any fixed $t$ there is a section $h(t)\in V$ such that its restriction to
 %$U$ has the form (\ref{eq:h}) (with $c_{j,i}$ as above and modulo $y^N$).
 We claim that there is a generic choice of
 $t_1, \ldots, t_r$ (for some large $r$)  of $t$--values, and a convenient choice of
 the coefficients $\{\alpha_l\}_{l=1}^r$ such that $s:=\sum_{l=1}^r \alpha_l h(t_l)$ satisfies the
 requirements.   Indeed, first we consider the Taylor expansion of $h(t)$ in variables
 $(x,y)$ at a point $(x,y)=(q,0)$ with $q$ generic (and modulo $y^N$ as usual):
 $$ \sum_{j,i} (x-q+q-t)^j y^i c_{j,i}(t)=
\sum_{j,i}  \sum_{k=0}^j (x-q)^k y^i \binom{j}{k}(q-t)^{j-k}c_{j,i}(t).$$
 The fact that $s$ at $(q,0)$ has multiplicity $\geq m-1$ transforms into a linear system
 $$\sum_{l=1}^r \alpha_l \,\Big( \, \sum_{j\geq k} \binom{j}{k} (q-t_l)^{j-k}c_{j,i}(t_l)\,\Big) =0$$
 for any $(k,i)$ with $k,i\geq 0$ and $k+i\leq m-2$.
 %This linear system splits in several
 %independent linear systems according to the index $i$: for fixed $i$ the corresponding linear
 %system in unknown
 This linear system $LS(r,m-2)$ with unknowns $\{\alpha_l\}_{l=1}^r$ has matrix
 $M(r, m-2)$ of size
 $r\times m(m-1)/2$. If $r\gg m(m-1)/2$ then the system has a nontrivial  solution. We need to
  show that
 for a generic choice of the solutions $\{\alpha_l\}_l$ the section $s$ has multiplicity $m-1$ at $q$. Assume that this is not the case. Then the generic solution of the system $LS(r, m-2)$
 is automatically solution of $LS(r,  m-1)$ too (the last one defined similarly).
This means that ${\rm rank} M(r, m-2)={\rm rank}M(r, m-1)$ ($\dag$) for generic $\{t_l\}_l$.

The matrix  $M(r, m-1)$ has $m$ additional rows corresponding to the indexes $(k,i)$ with
$k,i\geq 0$ and $k+i=m-1$. Let us fix one of them, corresponding to
the following choice.

Now let $d$ be the minimal number, such that there exists $j, i$ such that $i \leq m-1$, $j + i = d$ and $c_{j, i}(t)$ is not identically $0$. Since by assumption
(by non--vanishing of $h(t)$ along $E_v$)
there exists certain $j\geq m$  with
$c_{j, 0}\not \equiv 0$, such a $d$ exists.
Fix $i_0$ such that  $i_0 \leq m-1$, $j_0 + i_0 = d$ and $c_{j_0, i_0}(t)\not\equiv 0$.

Then, from the additional rows of $M(r,m-1)$  we chose the one indexed by
$(m-1-i_0,i_0)$.

Consider the minor of $M(r, m-1)$ of size $m(m-1)/2+1$,
 whose last row is the row corresponding to $(m-1-i_0,i_0)$, and the other rows belong to
 $M(r, m-2)$, while the last column corresponds to the generic $t_r=t$.
Then its determinant should be zero by ($\dag$).  Expanded it by the last column gives
\begin{equation*}%\label{eq:beta}
 \sum_{j\geq m-1-i_0} \binom{j}{m-1-i_0} (q-t)^{j-m+1+i_0}c_{j,i_0}(t)=
 \sum_{k,i\geq 0; k+i\leq m-2}\beta_{k,i}(q)\cdot \
  \sum_{j\geq k} \binom{j}{k} (q-t)^{j-k}c_{j,i}(t)
\end{equation*}
for some holomorphic functions $\beta_{k,i}(q)$. But such an identity cannot exist.
Indeed, since $c_{j_0,i_0}\not\equiv 0$, but
$c_{j,i_0}\equiv 0$ for any $j<j_0$,
the vanishing order of  $q-t$ at the left hand side is exactly $d-m+1$,
while on the right hand side ---
 since $j\geq d-i$ (otherwise $c_{j,i}\equiv 0$)  and $k\leq m-2-i$
implies $j-k \geq d-m +2$ --- we get vanishing order $\geq d-m+ 2$.

Finally we need to show that this generic $s$ does not vanish along $E_v$.
%\marginpar{leellenorizni}
This follows from a similar argument as above, or one can proceed as follows. For any generic $q$
consider a  section $s$ which has multiplicity $m-1$ at $(q,0)$. If it vanishes  along
$E_v$ then  $s+h(q)$ does not vanish along $E_v$ and it has multiplicity $m-1$ at $(q,0)$.
\end{proof}
\begin{remark} We claim that under the assumptions of Lemma \ref{lem:techn} the following
property also holds:
{\it For any finite set  $F\subset  E_v $
there exists a section $s\in V$ such that $s$ does not vanish
along  $E_v$, ${\rm div}(s)\cap F=\emptyset$, and at each each $p\in {\rm div}(s)\cap E_v$ the
intersection is  transversal.}
Indeed, we can use first Lemma \ref{lem:techn} for $k=1$ and then show that a generic combination of `moving' sections of multiplicity one works.
\end{remark}

\end{document}